\DeclareSymbolFont{rsfs}{U}{rsfs}{m}{n}
	\DeclareSymbolFontAlphabet{\mathrsfs}{rsfs}
  \newtheoremstyle{style}   
		{}												
		{}      			  	        
	  {\itshape}                
	 	{}                        
		{\normalfont\bfseries}	 	
		{\normalfont\bfseries .} {.5em}		
		{}
	\theoremstyle{style}
	\newtheorem{theorem}{Theorem}[section]
	\newtheorem{lemma}[theorem]{Lemma}
	\newtheorem{corollary}[theorem]{Corollary}
	\newtheorem{proposition}[theorem]{Proposition}
	\theoremstyle{remark}
	\newtheorem*{remark}{\textbf{Remark}}
	\theoremstyle{definition}
	\numberwithin{equation}{section}
  \title{On a family of integral operators of Hankel type}
  \subjclass[2010]{Primary 47B35; Secondary 47A10}
	\keywords{Hankel operator, integral operator, spectrum, diagonalization}
	\date{\today}
  \author[C. Uebersohn]{Christoph Uebersohn}
	\address{\begin{align*}
						&\text{C.~Uebersohn,} \\ &\text{FB 08 - Institut f\"{u}r Mathematik, Johannes Gutenberg-Universit\"{a}t Mainz,} \\
						&\text{Staudinger Weg 9, D-55099 Mainz, Germany}
					\end{align*}}
					\email{uebersoc@mathematik.uni-mainz.de}
\begin{document}
	\begin{abstract}
		In this paper we perform an explicit diagonalization of Hankel integral operators $ K^{(0)}, K^{(1)}, K^{(2)}, ... $ It turns out that each of these 		 operators has a simple purely absolutely continuous spectrum filling in the interval $ [-1,1] $. This generalizes a result of Kostrykin and Makarov 			(2008).
	\end{abstract}
	\maketitle

\section{Introduction and the main theorem}
	We denote by $ \mathbb{S}^{1} = \{ z \in \mathbb{C} : |z| = 1 \} $ and $ H^{2} $ the unit circle and the Hardy space on the circle, respectively. Here 	 and for the rest of the paper we use the following representation of a Hankel operator on $ H^{2} $ with symbol $ \varphi \in 														L^{\infty}(\mathbb{S}^{1}) $:
	\begin{equation*}
		S_{\varphi} = \left. PJM_{\varphi} \right|_{H^{2}}
	\end{equation*} 
	where $ P : L^{2}(\mathbb{S}^{1}) \rightarrow L^{2}(\mathbb{S}^{1}) $ is the orthogonal projection onto the closed subspace $ H^{2} $ of $ 							L^{2}(\mathbb{S}^{1}) $, $ J : L^{2}(\mathbb{S}^{1}) \rightarrow L^{2}(\mathbb{S}^{1}) $ denotes the flip operator, $ (Jf)(z) = f(\bar{z}) $, and $ 		M_{\varphi} : L^{2}(\mathbb{S}^{1}) \rightarrow L^{2}(\mathbb{S}^{1}) $ is the multiplication operator defined by $ M_{\varphi}f = \varphi \cdot f $.
			
	In his 1953 paper \cite{Krein} Mark Krein considered two bounded integral operators $ A_{j} $, $ j=0,1 $, on $ L^{2}(0,\infty) $ with kernels
	\begin{equation*}
		a_{0}(x,y) = \begin{cases}
  								 \sinh(x) \mathrm{e}^{-y}  & \text{if } x \leq y \\
   								 \sinh(y) \mathrm{e}^{-x} & \text{if } x \geq y
 								 \end{cases}
 		~ \text{and} ~
		a_{1}(x,y) = \begin{cases}
  								 \cosh(x) \mathrm{e}^{-y}  & \text{if } x \leq y \\
   								 \cosh(y) \mathrm{e}^{-x} & \text{if } x \geq y
 								 \end{cases}.
	\end{equation*}
	The operators $ A_{j} $, $ j=0,1 $, are the resolvents of the one-dimensional Dirichlet ($ j = 0 $) and Neumann ($ j = 1 $) Laplacians 
	$ - \mathrm{d}^{2} / \mathrm{d}x^{2} $ at the spectral point $ -1 $, respectively. Krein showed that the difference of the spectral projections 				for the operators $ A_{0} $ and $ A_{1} $ is given by
	\begin{equation*}
		K_{\mu} = E_{(-\infty,\mu)}(A_{1}) - E_{(-\infty,\mu)}(A_{0}), \quad 0 < \mu < 1,
	\end{equation*}
	where the kernel function of the integral operator $ K_{\mu} $ is of the form $ k_{\mu}(x+y) $.  More precisely,
	\begin{equation*}
		k_{\mu}(x) = \frac{2}{\pi} ~ \frac{\sin \big( \sqrt{\lambda(\mu)} x \big) }{x} \quad \text{with } \lambda(\mu) = \frac{1}{\mu} - 1.
	\end{equation*}
	In \cite{Kostrykin} Vadim Kostrykin and Konstantin A.\, Makarov deduced from this that $ K_{\mu} $ is unitarily equivalent to $ K_{1/2} $ with kernel 	$ k_{1/2}(x+y) = \frac{2}{\pi} ~ \frac{\sin(x+y)}{x+y} $, and $ K_{1/2} $ is unitarily equivalent to the Hankel operator $ S_{z \phi} $ on $ H^{2} $ 		where
	\begin{equation*}
		\phi : \mathbb{S}^{1} \rightarrow \mathbb{R}, \quad
		\phi(z) = 2 \cdot \mathds{1}_{\overline{\mathbb{H}_{r}} \cap \mathbb{S}^{1}}(z),
	\end{equation*}  
	and $ \mathbb{H}_{r} = \{ z \in \mathbb{C} : \operatorname{Re}(z) > 0 \} $ denotes the right half plane. By $ \mathds{1}_{\overline{\mathbb{H}_{r}} 		\cap \mathbb{S}^{1}} $ we denote the characteristic function of $ \overline{\mathbb{H}_{r}} \cap \mathbb{S}^{1} = \{ z \in 	 \mathbb{S}^{1} : 					\operatorname{Re}(z) \geq 0 \} $.

	In the present paper we consider a "natural" generalization of this operator $ S_{z \phi} $, namely the family of operators $ S_{z \phi}, S_{z^{2} 			\phi}, S_{z^{3} \phi},... $ As we will show in Theorem \ref{Definition der Operatoren K und Beweis einiger Eigenschaften}, these Hankel operators on 	 	 $ H^{2} $ are unitarily equivalent to the Hankel integral operators $ K^{(0)},K^{(1)},$ $ K^{(2)}, ... $ on $ L^{2}(0,\infty) $ with kernels $ 					(0,\infty) \times (0,\infty) \rightarrow \mathbb{R} $, $ (x,y) \mapsto k^{(\ell)}(x+y) $. These kernels are determined by the functions
	\begin{align*}
		&k^{(0)}(x) := \frac{2}{\pi} ~ \frac{\sin(x)}{x}, \\
		&k^{(\ell)}(x) := \frac{1}{\pi} ~ \frac{1}{2^{\ell-1}} \sum\limits_{n=0}^{2 \ell} (-1)^{n} ~\binom{2 \ell}{n} 																														~\frac{\mathrm{d}^{n}}{\mathrm{d}x^{n}} \bigg(\, \big(\, \mathrm{e}^{- \left| w \right|}~p_{\ell}(|w|) \big)\, \ast \Big(\, 														\frac{\mathrm{sin}(w)}{w} \Big)\, \bigg)\,(x),
	\end{align*}
	defined even on the whole of $ \mathbb{R} $, where $ \sin(0)/0 := 1 $,
	\begin{equation*}
	 	p_{\ell}(w) = \sum\limits_{j=0}^{\ell-1} \frac{1}{2^{j}} \binom{\ell+j-1}{\ell-1} \frac{1}{(\ell-j-1)!} w^{\ell-j-1}, \quad  w \in \mathbb{R},
	\end{equation*}
	 and $ \ell \in \mathbb{N} := \{ 1,2,3,... \} $. Here and for the rest of the paper we denote by $ \binom{N}{n} = \frac{N!}{n! (N-n)!} $ the binomial 	coefficient "$ N $ choose $ n $" if $ n $ and $ N $ are in $ \mathbb{N}_{0} := \mathbb{N} \cup \{ 0 \} $ such that $ n \leq N $. Furthermore, if $ f 		\in L^{p}(\mathbb{R}) $ for some $ p \in \left[ 1,\infty \right] $ and $ g \in L^{1}(\mathbb{R}) $, we denote by $ f \ast g $ the convolution of $ f $ 	 and $ g $,
	\begin{equation*}
		(f \ast g)(y) := \big(\, f(w) \ast g(w) \big)\, (y) := \int_{-\infty}^{\infty} f(y-x) g(x) \mathrm{d}x, \quad y \in \mathbb{R}.
	\end{equation*}
		
	In Theorem \ref{Diagonalisierung} we will give a full description of the spectrum of each $ K^{(0)},$ $ K^{(1)}$, $ K^{(2)}, ... $	
	\begin{theorem} \label{Diagonalisierung}
		Let $ p \leq 1/2 $ be a real number. Define the functions $ \rho_{p} : (0,\infty) \rightarrow (0,\infty) $ and $ h : (0,\infty) \rightarrow 
		(0, \infty) $ by
		\begin{equation*}
			\rho_{p}(\lambda) = \frac{1}{2 \pi^{2}}~\mathrm{sinh}\left( 2 \pi \sqrt{\lambda} \right) \left| \Gamma \left( \frac{1}{2}-p- \mathrm{i} 								\sqrt{\lambda} \right) \right|^{2}
			\quad \text{and} \quad
			h(\lambda) = \frac{\pi}{\mathrm{cosh}\left( \pi \sqrt{\lambda} \right)},
		\end{equation*}
		respectively, where	$ \Gamma $ denotes the Gamma function. For all $ m \in \mathbb{N}_{0} $ the following results hold true:
		\begin{enumerate}
			\item The operator $ K^{(2m)} $ is unitarily equivalent to 
					\begin{equation*}
						 M_{\frac{(-1)^{m}}{\pi} h} \oplus M_{\frac{(-1)^{m+1}}{\pi} h}
						 \text{   on   }  L^{2}\left( (0,\infty); \rho_{\frac{1}{2}-m}(\lambda) \mathrm{d}\lambda \right) \oplus L^{2}\left( (0,\infty); 											 \rho_{-\frac{1}{2}-m}(\lambda) \mathrm{d}\lambda \right) 
					\end{equation*}
					and has a simple purely absolutely continuous spectrum filling in the \linebreak interval $ \left[ -1,1 \right] $.
			\item The operator $ K^{(2m+1)} $ is unitarily equivalent to 
					\begin{equation*}
						M_{\frac{(-1)^{m+1}}{\pi} h} \oplus M_{\frac{(-1)^{m}}{\pi} h}
						\text{   on   } L^{2}\left( (0,\infty); \rho_{-\frac{1}{2}-m}(\lambda) \mathrm{d}\lambda \right) \oplus L^{2}\left( (0,\infty); 												\rho_{-\frac{1}{2}-m}(\lambda) \mathrm{d}\lambda \right)
					\end{equation*}
					and has a simple purely absolutely continuous spectrum filling in the \linebreak interval $ \left[ -1,1 \right] $.
		\end{enumerate}
	\end{theorem}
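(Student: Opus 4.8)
\emph{Proof strategy.} The plan is to push everything to the circle and reduce to a classical model. By the unitary equivalence $K^{(\ell)}\cong S_{z^{\ell+1}\phi}$ established above, and since in the orthonormal basis $\{z^n\}_{n\geq 0}$ of $H^2$ the operator $S_\varphi=PJM_\varphi|_{H^2}$ is represented by the Hankel matrix $\bigl(\widehat\varphi(-(j+k))\bigr)_{j,k\geq 0}$, a short computation (using $\phi(\bar z)=\phi(z)$ and the Fourier coefficients $\widehat\phi(m)=\tfrac{2}{\pi}\tfrac{\sin(m\pi/2)}{m}$ of the arc indicator $\phi$, with $\widehat\phi(0)=1$) shows that $K^{(\ell)}$ is unitarily equivalent to the Hankel matrix $\Gamma_\ell=\bigl(c_\ell(j+k)\bigr)_{j,k\geq 0}$, where $c_\ell(n)=\tfrac{2}{\pi}\,\sin\!\bigl((n+\ell+1)\pi/2\bigr)\big/(n+\ell+1)$. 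The decisive elementary observation is that $c_\ell(n)=0$ unless $n\equiv\ell\pmod 2$. Splitting $\ell^2(\mathbb{N}_0)$ into its even- and odd-indexed parts and conjugating by the diagonal unitary $\operatorname{diag}\bigl((-1)^a\bigr)_{a\geq 0}$, one finds: for $\ell=2m$ the matrix $\Gamma_{2m}$ is block-diagonal with even block $\tfrac{(-1)^m}{\pi}H_{m+1/2}$ and odd block $\tfrac{(-1)^{m+1}}{\pi}H_{m+3/2}$, where $H_\beta:=\bigl(\tfrac{1}{a+b+\beta}\bigr)_{a,b\geq 0}$ is the generalized Hilbert matrix; for $\ell=2m+1$ the matrix $\Gamma_{2m+1}$ is block-\emph{off}-diagonal, interchanging the even and odd subspaces through an operator unitarily equivalent to $\tfrac{(-1)^{m+1}}{\pi}H_{m+3/2}$, and any self-adjoint operator of the off-diagonal shape $\left(\begin{smallmatrix}0&A\\A&0\end{smallmatrix}\right)$ is unitarily equivalent to $A\oplus(-A)$ via an elementary $2\times 2$ unitary. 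Thus everything reduces to the spectral analysis of $H_\beta$ for the half-integers $\beta=m+\tfrac12$ and $\beta=m+\tfrac32$, $m\in\mathbb{N}_0$.

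The core step is the explicit diagonalization of $H_\beta$, which I would carry out through a complete family of generalized eigenfunctions. Writing $\tfrac{1}{a+b+\beta}=\int_0^1 t^{a+b+\beta-1}\,\mathrm{d}t$ and making the ansatz $\psi^\lambda_a=\int_0^1 t^{a+\beta-1}g_\lambda(t)\,\mathrm{d}t$, the eigenvalue equation $\sum_{b\geq 0}\psi^\lambda_b/(a+b+\beta)=h(\lambda)\psi^\lambda_a$ becomes an elementary functional equation for $g_\lambda$ solved by a combination of the powers $t^{\pm\mathrm{i}\sqrt\lambda}$, so that $\psi^\lambda_a$ is given explicitly by a Gauss hypergeometric function ${}_2F_1$. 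The substantial work is then to establish the Plancherel identity
\begin{equation*}
	\sum_{a\geq 0}|f_a|^2=\int_0^\infty\Bigl|\sum_{a\geq 0}\psi^\lambda_a f_a\Bigr|^2\rho_{1-\beta}(\lambda)\,\mathrm{d}\lambda
\end{equation*}
with \emph{exactly} the density $\rho_{1-\beta}$ from the statement --- in particular the factor $\bigl|\Gamma(\beta-\tfrac12-\mathrm{i}\sqrt\lambda)\bigr|^2$ --- together with completeness of the family $\{\psi^\lambda\}_{\lambda>0}$, so that $H_\beta$ has neither point spectrum nor a singular continuous part. This amounts to a Mehler--Fock/Kontorovich--Lebedev-type inversion formula; the exact form of $\rho_{1-\beta}$ emerges from a residue and asymptotic analysis of the relevant ${}_2F_1$ combined with the reflection and duplication formulas for the Gamma function. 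Granting this, $U_\beta\colon f\mapsto\bigl(\lambda\mapsto\sum_a\psi^\lambda_a f_a\bigr)$ is unitary from $\ell^2(\mathbb{N}_0)$ onto $L^2\bigl((0,\infty);\rho_{1-\beta}(\lambda)\,\mathrm{d}\lambda\bigr)$ and conjugates $H_\beta$ into $M_h$. (For $m=0$, $\beta=\tfrac12$, this recovers the diagonalization of $K^{(0)}=K_{1/2}$ due to Kostrykin and Makarov; the only new feature here is the shift of $\beta$ by $\tfrac12$, respectively $\tfrac32$, which merely changes the argument of the $\Gamma$-factor.)

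It remains to assemble and read off the spectrum. Substituting the diagonalization of $H_\beta$ --- with $1-\beta=\tfrac12-m$ when $\beta=m+\tfrac12$ and $1-\beta=-\tfrac12-m$ when $\beta=m+\tfrac32$ --- into the two block decompositions yields precisely the unitary equivalences asserted in (1) and (2). Finally, $\tfrac1\pi h(\lambda)=\bigl(\cosh(\pi\sqrt\lambda)\bigr)^{-1}$ is a strictly decreasing bijection of $(0,\infty)$ onto $(0,1)$, so on any space $L^2\bigl((0,\infty);\rho_p(\lambda)\,\mathrm{d}\lambda\bigr)$ with $\rho_p>0$ a.e.\ the operator $M_{\pm\frac1\pi h}$ has simple, purely absolutely continuous spectrum equal to $[0,1]$, respectively $[-1,0]$; in both (1) and (2) the two summands carry opposite signs, so their spectra overlap only at $\{0\}$, a null set for both spectral measures because $\tfrac1\pi h$ never vanishes on $(0,\infty)$. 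Hence the spectral multiplicity function of $K^{(\ell)}$ equals $1$ almost everywhere on $[-1,1]$, which is the assertion. The main obstacle is the completeness statement in the middle step --- equivalently, excluding bound states, a genuine issue once $\beta>1$ (i.e.\ $m\geq 1$) --- and the exact identification of the spectral density; an alternative route to that step is to run the Pushnitski--Yafaev spectral theory of Hankel operators with piecewise continuous symbols directly on $S_{z^{\ell+1}\phi}$, whose symbol has exactly two jumps (at $\pm\mathrm{i}$), the associated two-jump model reproducing the two-channel structure found above, with the power $z^{\ell+1}$ entering through the phases of the jumps.
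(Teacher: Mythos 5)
Your reduction is exactly the paper's: the passage to the Hankel matrix of $S_{z^{\ell+1}\phi}$ via the Fourier coefficients $c_k=\tfrac{2}{\pi k}\sin(\pi k/2)$, the observation that only one parity of $j+k$ survives, the even/odd splitting with the diagonal sign unitary, and the rotation of the off-diagonal block $\left(\begin{smallmatrix}0&A\\A&0\end{smallmatrix}\right)$ into $A\oplus(-A)$ all appear verbatim in Section 2 (Lemmas \ref{Fourierkoeffizienten}, \ref{Hauptsatz Beweis Lemma I}, \ref{Hauptsatz Beweis Lemma II}), and your parameters match: your $H_\beta$ is the paper's $H_p$ with $p=1-\beta$, so $\beta=m+\tfrac12$ and $\beta=m+\tfrac32$ give exactly $\rho_{1/2-m}$ and $\rho_{-1/2-m}$. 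The one place where you diverge is the step you yourself flag as the main obstacle: the diagonalization of the generalized Hilbert matrix $H_\beta$ with the precise density $\rho_{1-\beta}$, including completeness of the generalized eigenfunctions and absence of point and singular continuous spectrum. You outline a Mehler--Fock-type derivation via the ansatz $\psi^\lambda_a=\int_0^1 t^{a+\beta-1}g_\lambda(t)\,\mathrm{d}t$ but do not carry it out, so as written this is a program rather than a proof. The paper closes exactly this gap by citing Rosenblum's Theorem 4 from \emph{On the Hilbert matrix, II}, which states that $H_p$ is unitarily equivalent to $M_h$ on $L^2((0,\infty);\rho_p(\lambda)\,\mathrm{d}\lambda)$ for all $p\le 1/2$; that citation is all that is missing from your argument (your eigenfunction ansatz is essentially how Rosenblum's proof proceeds, so completing it would amount to reproving his theorem). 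Your final step --- reading off simplicity and absolute continuity from the strict monotonicity of $\lambda\mapsto 1/\cosh(\pi\sqrt{\lambda})$ and the fact that the two summands carry opposite signs so their spectra overlap only in the null set $\{0\}$ --- is correct and is slightly more explicit than what the paper records.
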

	
	Furthermore, we will prove some properties of the functions $ k^{(1)},$ $k^{(2)}, k^{(3)},... $ such as regularity and integrability, and we will 			study the behaviour of $ k^{(\ell)}(x) $ as $ |x| \rightarrow \infty $, see Theorem \ref{Gesammelte Eigenschaften der Funktionen k} and Corollary 			\ref{Ergaenzung der gesammelten Eigenschaften der Funktionen k}. In Theorem \ref{Satz ueber die explizite Darstellung der Funktionen k} we present an 	"explicit" formula for $ k^{(\ell)}(x) $ if $ \ell \in \mathbb{N} $ and $ x > 0 $, i.\,e., a formula which neither contains integrals nor 							convolutions. From this representation of the functions $ k^{(1)},k^{(2)},k^{(3)},... $ it is immediate that each function $ k^{(\ell)} $ is real 			analytic on $ (0,\infty) $, see Corollary \ref{Die Funktionen k sind reell analytisch fuer positive x}.
	
	We know only a few examples of Hankel operators which can be explicitly diagonalized, see Yafaev \cite{Yafaev}. From this point of view, it would be 		interesting to consider $ K^{(0)}, K^{(1)}, K^{(2)}, ... $ even if we had not known that the operators $ K^{(1)}, K^{(2)}, ... $ can be regarded as a 	"natural" generalization of the Hankel operator of Krein's example. Unfortunately, Yafaev's commutator method for the diagonalization of Hankel 				operators which is described in \cite{Yafaev} is not applicable to our problem.

\section{Proof of the main theorem \label{Darauf wird in Abschnitt 2 verwiesen}}

	On the basis of Theorem \ref{Definition der Operatoren K und Beweis einiger Eigenschaften} below, we will slightly generalize the proof of Theorem 1 		by Kostrykin and Makarov \cite{Kostrykin} to develop a proof of Theorem \ref{Diagonalisierung}.
	
	In order to derive Theorem \ref{Definition der Operatoren K und Beweis einiger Eigenschaften}, we first need to prove some auxiliary results.

	We denote by $ H^{2}(\mathbb{R}) $ the Hardy space on the line.	A Hankel operator on $ H^{2}(\mathbb{R}) $ with symbol $ \psi \in 											L^{\infty}(\mathbb{R}) $ acts as follows:
	\begin{equation*}
		S_{\psi} = \left. Q R M_{\psi} \right|_{H^{2}(\mathbb{R})}
	\end{equation*}
	where $ Q : L^{2}(\mathbb{R}) \rightarrow L^{2}(\mathbb{R}) $ is the orthogonal projection onto the closed subspace $ H^{2}(\mathbb{R}) $ of $ 					L^{2}(\mathbb{R}) $, $ R : L^{2}(\mathbb{R}) \rightarrow L^{2}(\mathbb{R}) $ denotes the reflection operator, $ (Rf)(x) = f(-x) $, and $ M_{\psi} : 		L^{2}(\mathbb{R}) \rightarrow L^{2}(\mathbb{R}) $ is the multiplication operator defined by $ M_{\psi}f = \psi \cdot f $.
	
	We use the following definition of the Fourier transform: if $ f \in L^{1}(\mathbb{R}) \cap L^{2}(\mathbb{R}) $ then
	\begin{equation*}
		(\mathrsfs{F} f)(x) = \frac{1}{\sqrt{2 \pi}} \int_{-\infty}^{\infty} f(t) \mathrm{e}^{-\mathrm{i} t x} \mathrm{d}t, \quad x \in \mathbb{R}.
	\end{equation*}
	
	Define
	\begin{equation*}
		\psi : \mathbb{R} \rightarrow \mathbb{R}, \quad \psi(t)=2 \cdot \mathds{1}_{\lbrack -1,1 \rbrack}(t),
	\end{equation*}
	and 
\begin{equation} \label{Definition der Funktion Phi}
	\phi : \mathbb{S}^{1} \rightarrow \mathbb{R}, \quad
	\phi(z) = 2 \cdot \mathds{1}_{\overline{\mathbb{H}_{r}} \cap \mathbb{S}^{1}}(z)
	= \begin{cases} \psi \left( \mathrm{i} \frac{1-z}{1+z} \right) & \text{if } z \neq -1 \\
		0 & \text{if } z = -1 \end{cases}.
\end{equation}

\begin{lemma} \label{Das erste Lemma dieses Abschnittes}
	Let $ \ell \in \mathbb{N}_{0} $. Then
	\begin{align*}
		\frac{1}{2 \pi} \int_{- \infty}^{\infty} t^{\ell} \psi(t) \mathrm{e}^{-\mathrm{i}tx} \mathrm{d}t 
		= \sqrt{\frac{2}{\pi}} \, \Big( \mathrsfs{F} \big\{ t^{\ell}~ \mathds{1}_{\lbrack -1,1 \rbrack}(t) \big\} \Big) (x)				
		= \frac{2}{\pi} ~ \mathrm{i}^{\ell} ~ \frac{\mathrm{d}^{\ell}}{\mathrm{d}x^{\ell}} \left\{	\frac{\mathrm{sin}(x)}{x} \right\}
	\end{align*} \label{Ableitung}
	holds true for all $ x \in \mathbb{R} $.
\end{lemma}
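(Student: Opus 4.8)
The first equality is purely a matter of unwinding definitions. Since $\psi(t) = 2 \cdot \mathds{1}_{\lbrack -1,1 \rbrack}(t)$, the leftmost expression equals $\frac{1}{\pi} \int_{-1}^{1} t^{\ell} \mathrm{e}^{-\mathrm{i}tx}\,\mathrm{d}t$; on the other hand, the stated normalization of $\mathrsfs{F}$ gives $\sqrt{2/\pi}\,\big( \mathrsfs{F}\{ t^{\ell} \mathds{1}_{\lbrack -1,1 \rbrack}(t) \}\big)(x) = \sqrt{2/\pi}\cdot\frac{1}{\sqrt{2\pi}} \int_{-1}^{1} t^{\ell}\mathrm{e}^{-\mathrm{i}tx}\,\mathrm{d}t = \frac{1}{\pi}\int_{-1}^{1} t^{\ell}\mathrm{e}^{-\mathrm{i}tx}\,\mathrm{d}t$. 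So the two sides agree, and it suffices to prove $\frac{1}{\pi}\int_{-1}^{1} t^{\ell}\mathrm{e}^{-\mathrm{i}tx}\,\mathrm{d}t = \frac{2}{\pi}\,\mathrm{i}^{\ell}\,\frac{\mathrm{d}^{\ell}}{\mathrm{d}x^{\ell}}\big\{ \sin(x)/x \big\}$ for every $x \in \mathbb{R}$.

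First I would establish the base case $\ell = 0$: a direct computation gives $\int_{-1}^{1} \mathrm{e}^{-\mathrm{i}tx}\,\mathrm{d}t = \frac{\mathrm{e}^{-\mathrm{i}x}-\mathrm{e}^{\mathrm{i}x}}{-\mathrm{i}x} = \frac{2\sin(x)}{x}$ for $x \neq 0$, and the value $2$ at $x = 0$, matching $2\cdot\sin(0)/0 = 2$ under the convention $\sin(0)/0 := 1$. Hence $\frac{\sin(x)}{x} = \frac12 \int_{-1}^{1}\mathrm{e}^{-\mathrm{i}tx}\,\mathrm{d}t$ for all $x \in \mathbb{R}$.

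For general $\ell \in \mathbb{N}_{0}$ I would differentiate under the integral sign $\ell$ times. This is justified because the integration is over the compact interval $[-1,1]$ and, for each fixed $t$, the function $x \mapsto \mathrm{e}^{-\mathrm{i}tx}$ is smooth with $\frac{\partial^{k}}{\partial x^{k}}\mathrm{e}^{-\mathrm{i}tx} = (-\mathrm{i}t)^{k}\mathrm{e}^{-\mathrm{i}tx}$ bounded uniformly in $(t,x) \in [-1,1] \times \mathbb{R}$ by $1$; the standard theorem on differentiation of parameter integrals (or repeated dominated convergence) then applies. This yields
\begin{equation*}
	\frac{\mathrm{d}^{\ell}}{\mathrm{d}x^{\ell}}\left\{ \frac{\sin(x)}{x} \right\}
	= \frac{1}{2}\int_{-1}^{1} (-\mathrm{i}t)^{\ell}\,\mathrm{e}^{-\mathrm{i}tx}\,\mathrm{d}t
	= \frac{(-\mathrm{i})^{\ell}}{2}\int_{-1}^{1} t^{\ell}\,\mathrm{e}^{-\mathrm{i}tx}\,\mathrm{d}t .
\end{equation*}
Multiplying by $\frac{2}{\pi}\mathrm{i}^{\ell}$ and using $\mathrm{i}^{\ell}(-\mathrm{i})^{\ell} = (\mathrm{i}\cdot(-\mathrm{i}))^{\ell} = 1$ gives exactly $\frac{2}{\pi}\mathrm{i}^{\ell}\frac{\mathrm{d}^{\ell}}{\mathrm{d}x^{\ell}}\{\sin(x)/x\} = \frac{1}{\pi}\int_{-1}^{1} t^{\ell}\mathrm{e}^{-\mathrm{i}tx}\,\mathrm{d}t$, which is the desired identity.

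There is no serious obstacle here: the only point requiring a word of care is the interchange of differentiation and integration, and this is immediate from the compactness of $[-1,1]$. Alternatively, one could avoid the interchange altogether by integrating $t^{\ell}\mathrm{e}^{-\mathrm{i}tx}$ by parts repeatedly, but the parameter-integral argument is cleaner and handles all $x$, including $x = 0$, uniformly.
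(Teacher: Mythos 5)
Your proposal is correct; the paper itself dismisses this lemma with "This is straightforward," and your argument (unwinding the normalization of $\mathrsfs{F}$ for the first equality, then differentiating $\frac{\sin(x)}{x}=\frac{1}{2}\int_{-1}^{1}\mathrm{e}^{-\mathrm{i}tx}\,\mathrm{d}t$ under the integral sign $\ell$ times, justified by compactness of $[-1,1]$) is exactly the natural computation the author had in mind. All constants check out, including the factor $\mathrm{i}^{\ell}(-\mathrm{i})^{\ell}=1$.
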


\begin{proof}
	This is straightforward.
\end{proof}

Let $ \ell \in \mathbb{N}_{0} $. Since 
\begin{equation} \label{Die explizite Funktion Psi als Symbol von S}
	\psi_{\ell}(t):= \left( \frac{1+\mathrm{i}t}{1-\mathrm{i}t} \right)^{\ell} \phi\left( \frac{1+\mathrm{i}t}{1-\mathrm{i}t} \right)
								 = \left( \frac{1+\mathrm{i}t}{1-\mathrm{i}t} \right)^{\ell} \psi(t)
\end{equation}
for all $ t \in \mathbb{R} $ we know by Power \cite{Power}, p.\,14, that the Hankel operator $ S_{\psi_{\ell}} $ on $ H^{2}(\mathbb{R}) $ is unitarily equivalent to the Hankel operator $ S_{z^{\ell +1} \phi} $ on $ H^{2} $.

We will compute the Fourier transform of $ t \mapsto \left( \frac{1+\mathrm{i}t}{1-\mathrm{i}t} \right)^{\ell} \psi(t) $ for all $ \ell \in \mathbb{N}_{0} $. Define $ \xi : \mathbb{R} \rightarrow \mathbb{R},~t \mapsto \frac{1}{1+t^{2}} $, and $ \tilde{\psi}_{\ell} : \mathbb{R} \rightarrow \mathbb{C},~t \mapsto (1+\mathrm{i}t)^{2 \ell} \cdot \psi(t) $. Since $ \frac{1+\mathrm{i}t}{1-\mathrm{i}t} = \frac{(1+\mathrm{i}t)^{2}}{1+t^{2}} $ we have to determine the Fourier transform of $ \xi^{\ell} \cdot \tilde{\psi}_{\ell} $.

We start by computing $ \mathrsfs{F} \tilde{\psi}_{\ell} $. Obviously, $ \tilde{\psi}_{\ell} \in L^{1}(\mathbb{R}) \cap L^{2}(\mathbb{R}) $ for all $ \ell \in \mathbb{N}_{0} $. Now the binomial theorem, the linearity of the Fourier transform, and Lemma \ref{Das erste Lemma dieses Abschnittes} imply the following representation for $ \mathrsfs{F} \tilde{\psi}_{\ell} $:

\begin{proposition} \label{FourierFaktor}
	Let $ \ell \in \mathbb{N}_{0} $. Then
	\begin{align*}
		\Big(\, \mathrsfs{F} \tilde{\psi}_{\ell} \Big)\, (w) = \sqrt{\frac{8}{\pi}} \sum\limits_{n=0}^{2 \ell} \binom{2 \ell}{n} (-1)^{n} 																																							\frac{\mathrm{d}^{n}}{\mathrm{d}w^{n}} ~ \frac{\mathrm{sin}(w)}{w}, \quad w \in \mathbb{R},
	\end{align*}
	holds true.
	The function $ z \mapsto \frac{\sin(z)}{z} $ is entire; if we restrict it to $ \mathbb{R} $ then the function $ z \mapsto \frac{\sin(z)}{z} $ and all 	of its derivatives of arbitrary order are bounded.
\end{proposition}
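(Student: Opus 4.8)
The plan is to reduce the first identity entirely to Lemma~\ref{Das erste Lemma dieses Abschnittes}. First I would expand the polynomial factor by the binomial theorem,
\begin{equation*}
	\tilde{\psi}_{\ell}(t) = (1+\mathrm{i}t)^{2\ell}\,\psi(t) = \sum_{n=0}^{2\ell} \binom{2\ell}{n}\,\mathrm{i}^{n}\,t^{n}\,\psi(t),
\end{equation*}
so that $\tilde{\psi}_{\ell}$ is a finite linear combination of the functions $t \mapsto t^{n}\psi(t)$, $0 \le n \le 2\ell$, all of which belong to $L^{1}(\mathbb{R}) \cap L^{2}(\mathbb{R})$ because $\psi$ is supported on $[-1,1]$. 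Using the linearity of $\mathrsfs{F}$ I would then write
\begin{equation*}
	\big(\mathrsfs{F}\tilde{\psi}_{\ell}\big)(w) = \sum_{n=0}^{2\ell} \binom{2\ell}{n}\,\mathrm{i}^{n}\,\big(\mathrsfs{F}\{t^{n}\psi(t)\}\big)(w),
\end{equation*}
and Lemma~\ref{Das erste Lemma dieses Abschnittes} gives $\big(\mathrsfs{F}\{t^{n}\psi(t)\}\big)(w) = 2\sqrt{2/\pi}\;\mathrm{i}^{n}\,\frac{\mathrm{d}^{n}}{\mathrm{d}w^{n}}\frac{\sin w}{w}$. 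Substituting this and collecting constants, with $\mathrm{i}^{n}\cdot\mathrm{i}^{n} = \mathrm{i}^{2n} = (-1)^{n}$ and $2\sqrt{2/\pi} = \sqrt{8/\pi}$, produces the asserted formula for $\mathrsfs{F}\tilde{\psi}_{\ell}$.

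For the supplementary statement I would argue as follows. The power series $\frac{\sin z}{z} = \sum_{k=0}^{\infty}\frac{(-1)^{k}}{(2k+1)!}\,z^{2k}$ has infinite radius of convergence, hence the function $z \mapsto \frac{\sin z}{z}$ is entire. To see that its restriction to $\mathbb{R}$ together with all of its derivatives is bounded, I would use the representation $\frac{\sin x}{x} = \frac{1}{2}\int_{-1}^{1}\mathrm{e}^{\mathrm{i}xt}\,\mathrm{d}t$ and differentiate under the integral sign — which is legitimate because the integrand and all of its derivatives in $x$ are continuous on the compact interval $[-1,1]$ — to obtain $\frac{\mathrm{d}^{n}}{\mathrm{d}x^{n}}\frac{\sin x}{x} = \frac{1}{2}\int_{-1}^{1}(\mathrm{i}t)^{n}\,\mathrm{e}^{\mathrm{i}xt}\,\mathrm{d}t$, so that $\big|\frac{\mathrm{d}^{n}}{\mathrm{d}x^{n}}\frac{\sin x}{x}\big| \le \frac{1}{2}\int_{-1}^{1}|t|^{n}\,\mathrm{d}t = \frac{1}{n+1} \le 1$ for every $x \in \mathbb{R}$.

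I do not expect a genuine obstacle in this proof; it is essentially a matter of bookkeeping. The two points that deserve an explicit word are the appearance of two independent factors $\mathrm{i}^{n}$ (one from the binomial expansion of $(1+\mathrm{i}t)^{2\ell}$ and one from Lemma~\ref{Das erste Lemma dieses Abschnittes}) combining to $(-1)^{n}$, and the interchange of differentiation and integration in the last step; both are routine since every function involved is entire.
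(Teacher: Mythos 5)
Your derivation of the Fourier transform formula is exactly the paper's: expand $(1+\mathrm{i}t)^{2\ell}$ by the binomial theorem, use linearity of $\mathrsfs{F}$, and apply Lemma \ref{Das erste Lemma dieses Abschnittes} to each term $t^{n}\psi(t)$; the bookkeeping with the two factors $\mathrm{i}^{n}$ combining to $(-1)^{n}$ and the constant $2\sqrt{2/\pi}=\sqrt{8/\pi}$ is correct. For the supplementary claim the entirety argument (infinite radius of convergence of the power series) coincides with the paper's, but your boundedness argument is genuinely different: the paper applies Leibniz' rule to $\frac{\sin x}{x}$ to show that each derivative tends to $0$ as $|x|\to\infty$ and then relies on continuity of the restriction of the entire function to conclude boundedness, whereas you differentiate the representation $\frac{\sin x}{x}=\frac{1}{2}\int_{-1}^{1}\mathrm{e}^{\mathrm{i}xt}\,\mathrm{d}t$ under the integral sign to get the explicit uniform bound $\bigl|\frac{\mathrm{d}^{n}}{\mathrm{d}x^{n}}\frac{\sin x}{x}\bigr|\leq\frac{1}{n+1}$. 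Your route is shorter and quantitatively sharper; the paper's route has the side benefit of establishing the decay $\frac{\mathrm{d}^{n}}{\mathrm{d}x^{n}}\frac{\sin x}{x}\to 0$ at infinity, which it reuses later (in the proof of Theorem \ref{Gesammelte Eigenschaften der Funktionen k}). Both arguments are correct.
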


\begin{proof}
	It remains to show that the function $ z \mapsto \frac{\sin(z)}{z} = \sum_{n=0}^{\infty} (-1)^{n}~\frac{z^{2n}}{(2n+1)!} $ possesses the properties 		claimed above. It follows from the Cauchy-Hadamard theorem and Stirling's approximation that the function $ z \mapsto \frac{\sin(z)}{z} $ is entire.
	
	Let $ n \in \mathbb{N}_{0} $. Leibniz' rule implies that
	\begin{align*}
		\frac{\mathrm{d}^{n}}{\mathrm{d}x^{n}} ~ \frac{\sin(x)}{x}
		= \sum_{j=0}^{n} \binom{n}{j} ~ (-1)^{j} ~ j! ~ x^{-j-1} ~ \frac{\mathrm{d}^{n-j}}{\mathrm{d}x^{n-j}} \sin(x)
		\rightarrow 0
	\end{align*}
	as $ |x| \rightarrow \infty $ where $ x $ is real.
	Therefore, Proposition \ref{FourierFaktor} is proved.
\end{proof}

\begin{theorem} \label{Berechnung der Fourier-Transformierten von Xi hoch ell}
	For each $ \ell \in \mathbb{N} $ the Fourier transform of $ \xi^{\ell} $ is given by
	\begin{align*}
		\big(\, \mathrsfs{F} \{ \xi^{\ell} \} \big)\, (w)
		= \frac{1}{\sqrt{2 \pi}} \cdot \frac{\pi}{2^{\ell-1}} \cdot \mathrm{e}^{- \left| w \right|}~ p_{\ell}(|w|), \quad w \in \mathbb{R},~\text{where} \\
		p_{\ell}(w) = \sum\limits_{j=0}^{\ell-1} \frac{1}{2^{j}} \binom{\ell+j-1}{\ell-1} \frac{1}{(\ell-j-1)!} w^{\ell-j-1}, \quad w \in \mathbb{R}.
	\end{align*}
	\label{Koeffizienten}
\end{theorem}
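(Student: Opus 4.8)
The plan is to prove the formula by induction on $\ell$, reducing the computation of $\mathrsfs{F}\{\xi^{\ell+1}\}$ to that of $\mathrsfs{F}\{\xi^{\ell}\}$ by means of a first-order differential relation. The starting point is the elementary identity $(\xi^{\ell})'(t)=-2\ell\,t\,\xi^{\ell+1}(t)$ (for $t\in\mathbb{R}$, $\ell\in\mathbb{N}$), together with the observation that $\xi^{\ell}$, $(\xi^{\ell})'$ and $t\,\xi^{\ell+1}$ all lie in $L^{1}(\mathbb{R})\cap L^{2}(\mathbb{R})$. Applying $\mathrsfs{F}$ and using the standard rules $\mathrsfs{F}\{f'\}(w)=\mathrm{i}w\,(\mathrsfs{F}f)(w)$ and $\mathrsfs{F}\{tf\}(w)=\mathrm{i}\,(\mathrsfs{F}f)'(w)$ yields, for every $\ell\in\mathbb{N}$ and all $w\in\mathbb{R}$,
\begin{equation*}
	\bigl(\mathrsfs{F}\{\xi^{\ell+1}\}\bigr)'(w)=-\frac{w}{2\ell}\,\bigl(\mathrsfs{F}\{\xi^{\ell}\}\bigr)(w).
\end{equation*}

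The base case $\ell=1$ is the classical formula $\bigl(\mathrsfs{F}\{\xi\}\bigr)(w)=\sqrt{\pi/2}\,\mathrm{e}^{-|w|}$, immediate from the residue theorem, which agrees with the asserted expression since $p_{1}\equiv 1$. For the inductive step, assume the formula for some $\ell\in\mathbb{N}$ and set
\begin{equation*}
	G(w):=\frac{1}{\sqrt{2\pi}}\cdot\frac{\pi}{2^{\ell}}\cdot\mathrm{e}^{-|w|}\,p_{\ell+1}(|w|),\qquad w\in\mathbb{R}.
\end{equation*}
Differentiating $G$ on $(0,\infty)$ and inserting the induction hypothesis, the desired identity $G'=\bigl(\mathrsfs{F}\{\xi^{\ell+1}\}\bigr)'$ on $(0,\infty)$ reduces — once the scalar factors cancel — to the \emph{polynomial} identity
\begin{equation*}
	p_{\ell+1}'(w)-p_{\ell+1}(w)=-\frac{w}{\ell}\,p_{\ell}(w),\qquad w\in\mathbb{R}.
\end{equation*}
Granting this, the equality $G'=\bigl(\mathrsfs{F}\{\xi^{\ell+1}\}\bigr)'$ extends to all of $\mathbb{R}\setminus\{0\}$ by evenness, and since $G$ and $\mathrsfs{F}\{\xi^{\ell+1}\}$ are continuous on $\mathbb{R}$ their difference is a constant; being the difference of two functions in $L^{2}(\mathbb{R})$ (by Plancherel, as $\xi^{\ell+1}\in L^{1}(\mathbb{R})\cap L^{2}(\mathbb{R})$), that constant vanishes. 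Hence $\mathrsfs{F}\{\xi^{\ell+1}\}=G$, which is precisely the claim for $\ell+1$.

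The remaining task is the polynomial identity, and this is the only genuinely computational — but entirely mechanical — part, and thus the main obstacle. I would establish it by comparing, for each $j$, the coefficient of $w^{\ell-j}$ on both sides. Writing $a_{j}=2^{-j}\binom{\ell+j}{\ell}\frac{1}{(\ell-j)!}$ for the coefficient of $w^{\ell-j}$ in $p_{\ell+1}$, the interior comparisons ($1\le j\le\ell-1$) take the form
\begin{equation*}
	a_{j-1}(\ell-j+1)-a_{j}=-\frac{1}{\ell}\cdot\frac{1}{2^{j}}\binom{\ell+j-1}{\ell-1}\frac{1}{(\ell-j-1)!},
\end{equation*}
which, after clearing factorials, collapses to Pascal's rule combined with the elementary ratio $\binom{\ell+j-1}{\ell}=\tfrac{j}{\ell}\binom{\ell+j-1}{\ell-1}$; the extreme cases $j=0$ and $j=\ell$ reduce to the trivialities $a_{0}=1/\ell!$ and $a_{\ell-1}=a_{\ell}$, the latter being $\binom{2\ell}{\ell}=2\binom{2\ell-1}{\ell}$ (which in addition shows $G\in C^{1}(\mathbb{R})$). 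An alternative route would be to compute $\mathrsfs{F}\{\xi^{\ell}\}$ directly by residues — for $w>0$, close the contour in the lower half-plane and evaluate the residue of $\mathrm{e}^{-\mathrm{i}tw}(1+t^{2})^{-\ell}$ at the order-$\ell$ pole $t=-\mathrm{i}$ via Leibniz's rule — but identifying the resulting finite sum with $p_{\ell}(|w|)$ requires essentially the same binomial bookkeeping, so the induction above is the cleaner presentation; everything in it outside the last combinatorial identification is soft analysis.
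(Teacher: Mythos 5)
Your proof is correct, and it takes a genuinely different route from the paper's. Both arguments are inductions on $\ell$ with the same base case $(\mathrsfs{F}\xi)(w)=\sqrt{\pi/2}\,\mathrm{e}^{-|w|}$, but the inductive steps differ. The paper passes from $\ell$ to $\ell+1$ via the convolution theorem, $\mathrsfs{F}\{\xi^{\ell+1}\}=\frac{1}{\sqrt{2\pi}}\,(\mathrsfs{F}\{\xi^{\ell}\})\ast(\mathrsfs{F}\xi)$; this requires a separate computation of $\int_{-\infty}^{\infty}|x|^{m}\mathrm{e}^{-|x|}\mathrm{e}^{-|y-x|}\,\mathrm{d}x$ (Lemma \ref{Faltungen}) followed by a two-index comparison of coefficients resting on the hockey-stick identity $\sum_{\alpha=0}^{\ell-\beta}\binom{\ell+\alpha-1}{\ell-1}=\binom{2\ell-\beta}{\ell}$. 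You instead differentiate: from $(\xi^{\ell})'(t)=-2\ell t\,\xi^{\ell+1}(t)$ and the standard transform rules (legitimate here since $\xi^{\ell}$, $(\xi^{\ell})'$ and $t\xi^{\ell+1}$ all lie in $L^{1}(\mathbb{R})$) you obtain the first-order relation $(\mathrsfs{F}\{\xi^{\ell+1}\})'(w)=-\tfrac{w}{2\ell}(\mathrsfs{F}\{\xi^{\ell}\})(w)$, which reduces the inductive step to the one-index polynomial identity $p_{\ell+1}'-p_{\ell+1}=-\tfrac{w}{\ell}\,p_{\ell}$, settled by Pascal's rule and the ratio $\binom{\ell+j-1}{\ell}=\tfrac{j}{\ell}\binom{\ell+j-1}{\ell-1}$; the additive constant is then killed by $L^{2}$-membership (Riemann--Lebesgue would do as well). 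I checked the interior coefficient comparison and the two extreme cases, and they are as you state; note that your boundary identity $a_{\ell-1}=a_{\ell}$, i.e.\ $\binom{2\ell}{\ell}=2\binom{2\ell-1}{\ell}$, is exactly the identity the paper invokes at the end of its own proof. Your route trades the convolution integral and the double-sum bookkeeping for a lighter combinatorial verification plus a small amount of soft analysis (the evenness argument on $\mathbb{R}\setminus\{0\}$ and continuity at $w=0$); both proofs are complete, and yours is arguably the cleaner presentation, while the paper's has the virtue of computing the transform by direct integration.
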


\begin{remark}[to Theorem \ref{Berechnung der Fourier-Transformierten von Xi hoch ell}]
	There is a more general result in "Tabellen zur Fourier Transformation" \cite{Oberhettinger} by Oberhettinger; according to p.\,202 in 									\cite{Oberhettinger} one has
	\begin{align*}
		\int_{-\infty}^{\infty} \frac{\mathrm{e}^{\mathrm{i} xy}}{\{ a^{2} + (x \pm b)^{2} \}^{\nu}} \mathrm{d}x
		= 2 \mathrm{e}^{\mp \mathrm{i} b y} \frac{\sqrt{\pi}}{\Gamma(\nu)} \left( \frac{|y|}{2a} \right)^{\nu-1/2} K_{\nu-1/2}(a |y|), ~
		\operatorname{Re}(\nu) > 0,
	\end{align*}
	where $ \Gamma $ denotes the Gamma function and
	\begin{align*}
		&K_{\nu}(z) = \frac{\pi}{2} ~ \frac{I_{-\nu}(z) - I_{\nu}(z)}{\sin(\pi \nu)}, \quad
		I_{\nu}(z) = \mathrm{e}^{-\mathrm{i} \pi \nu / 2} J_{\nu}(z \mathrm{e}^{\mathrm{i} \pi / 2}), \\
		&J_{\nu}(z) = \sum\limits_{n=0}^{\infty} \frac{(-1)^{n} (z/2)^{\nu + 2n}}{n! ~ \Gamma(\nu+n+1)}.
	\end{align*}
	In case that $ a=1 $, $ b=0 $, and $ \nu = \ell \in \mathbb{N} $, we are in the situation of Theorem \ref{Berechnung der Fourier-Transformierten 				von Xi hoch ell}.
\end{remark}

We will need the following
\begin{lemma} \label{Faltungen}
	Let $ \ell \in \mathbb{N}_{0} $. Then
	\begin{align*}
		\int_{-\infty}^{\infty} \left| x \right|^{\ell} \mathrm{e}^{-\left| x \right|} \mathrm{e}^{-\left| y-x \right|} \mathrm{d}x 
		= \frac{\mathrm{e}^{-\left| y \right|}}{\ell+1} \bigg\{\, \left| y \right|^{\ell+1} + \sum\limits_{j=0}^{\ell-1}  																				\frac{(\ell+1)!}{(\ell-j)!} ~ \frac{ \left| y \right|^{\ell-j} }{2^{1+j}} + \frac{(\ell+1)!}{2^{\ell}} \bigg\}\,
	\end{align*}
	holds true for all $ y \in \mathbb{R} $.
\end{lemma}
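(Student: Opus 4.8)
The plan is a direct computation after a symmetry reduction. First note that both sides of the asserted identity are even in $y$: substituting $x \mapsto -x$ in the integral turns $\int_{-\infty}^{\infty} |x|^{\ell} \mathrm{e}^{-|x|} \mathrm{e}^{-|y-x|} \,\mathrm{d}x$ into $\int_{-\infty}^{\infty} |x|^{\ell} \mathrm{e}^{-|x|} \mathrm{e}^{-|y+x|} \,\mathrm{d}x$, which is the same integral with $y$ replaced by $-y$, while the right-hand side depends on $y$ only through $|y|$. Hence it suffices to prove the formula for $y \geq 0$.

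For $y \geq 0$ I would split $\mathbb{R}$ into the three intervals $(-\infty,0)$, $[0,y]$, $(y,\infty)$, on which the signs of $x$ and of $y-x$ are fixed, so that the absolute values are resolved and the integrand becomes $x^{\ell}$ (resp.\ $(-x)^{\ell}$) times a pure exponential. The first piece, after the substitution $u=-x$, is the elementary Gamma integral $\mathrm{e}^{-y}\int_{0}^{\infty} u^{\ell}\mathrm{e}^{-2u}\,\mathrm{d}u = \mathrm{e}^{-y}\,\ell!/2^{\ell+1}$; the middle piece is simply $\mathrm{e}^{-y}\,y^{\ell+1}/(\ell+1)$; and the third piece equals $\mathrm{e}^{y}\int_{y}^{\infty} x^{\ell}\mathrm{e}^{-2x}\,\mathrm{d}x$. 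This last tail integral is $2^{-(\ell+1)}\Gamma(\ell+1,2y)$; using the closed form of the upper incomplete Gamma function at integer order (equivalently, solving the integration-by-parts recursion $\int_{y}^{\infty} x^{\ell}\mathrm{e}^{-2x}\,\mathrm{d}x = \tfrac12 y^{\ell}\mathrm{e}^{-2y} + \tfrac{\ell}{2}\int_{y}^{\infty} x^{\ell-1}\mathrm{e}^{-2x}\,\mathrm{d}x$) it equals $\tfrac{\ell!}{2^{\ell+1}}\mathrm{e}^{-2y}\sum_{k=0}^{\ell}(2y)^{k}/k!$, so the third piece contributes $\tfrac{\ell!}{2^{\ell+1}}\mathrm{e}^{-y}\sum_{k=0}^{\ell}(2y)^{k}/k!$.

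Finally I would add the three contributions and factor out $\mathrm{e}^{-y}$. The constant from the first piece combines with the $k=0$ term of the incomplete-Gamma sum to give $\ell!/2^{\ell}$; reindexing the remaining terms of that sum by $j=\ell-k$ turns $\tfrac{\ell!}{2^{\ell+1}}\cdot\tfrac{(2y)^{\ell-j}}{(\ell-j)!}$ into $\tfrac{\ell!}{(\ell-j)!}\cdot\tfrac{y^{\ell-j}}{2^{1+j}}$, i.e.\ exactly the $j$-th summand ($j=0,\dots,\ell-1$) in the statement; and the middle piece supplies the $y^{\ell+1}$ term. Multiplying and dividing by $\ell+1$ yields the claimed expression, the case $\ell=0$ (empty sum) being included. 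There is no real obstacle here; the only point that needs a little care is the evaluation of the tail integral $\int_{y}^{\infty} x^{\ell}\mathrm{e}^{-2x}\,\mathrm{d}x$, for which the integration-by-parts recursion above — or, alternatively, an induction on $\ell$ based on that same recursion together with the identity for $\ell-1$ — keeps the argument self-contained.
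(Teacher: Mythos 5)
Your proof is correct and follows essentially the same route as the paper's: a direct elementary computation obtained by splitting the integral according to the signs inside the absolute values and evaluating the resulting exponential integrals. The only difference is cosmetic — you reduce to $y\geq 0$ by the evenness of both sides and use three intervals, whereas the paper computes four pieces for general $y\neq 0$ with Heaviside functions and checks $y=0$ separately — and all your intermediate evaluations (the Gamma integral $\ell!/2^{\ell+1}$, the middle term $y^{\ell+1}/(\ell+1)$, and the incomplete-Gamma closed form for the tail) check out against the stated formula.
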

\newpage
\begin{proof}
	In this proof we denote by $ \mathrm{H} $ the Heaviside function.
	
	Let $ \ell \in \mathbb{N}_{0} $, $ y \in \mathbb{R} \setminus \{0\} $. By elementary computations it follows that
	\begin{align*}
		I_{1} &:= \int_{0}^{\infty} x^{\ell}~\mathrm{e}^{-x}~\mathrm{e}^{-y+x}~\mathds{1}_{ \{ x<y \} }(x) \mathrm{d}x
						= \mathrm{e}^{-y}~\frac{y^{\ell+1}}{\ell+1} \cdot \mathrm{H}(y), \\
		I_{2} &:= \int_{0}^{\infty} x^{\ell} \mathrm{e}^{-x} \mathrm{e}^{y-x}~\mathds{1}_{ \{ x>y \} }(x) \mathrm{d}x \\
					&= \mathrm{e}^{-\left| y \right|} \bigg\{\, \frac{\ell !}{2^{\ell+1}} \cdot \mathrm{H}(-y) +\frac{1}{2^{\ell+1}}~ 																					\sum\limits_{j=0}^{\ell} (2y)^{\ell-j}~\frac{\ell !}{(\ell-j)!} \cdot \mathrm{H}(y) \bigg\}\,, \\
		I_{3} &:= \int_{-\infty}^{0} (-x)^{\ell} \mathrm{e}^{x} \mathrm{e}^{-y+x}~\mathds{1}_{ \{ x<y \} }(x) \mathrm{d}x \\
					&= \mathrm{e}^{-\left| y \right|} \bigg\{\, \frac{\ell !}{2^{\ell+1}} \cdot \mathrm{H}(y) + \frac{1}{2^{\ell+1}} ~ 																				\sum\limits_{j=0}^{\ell} (2y)^{\ell-j} ~ \frac{\ell !}{(\ell-j)!} \cdot \mathrm{H}(-y) \bigg\}\,, \\
		I_{4} &:= \int_{-\infty}^{0} (-x)^{\ell} \mathrm{e}^{x} \mathrm{e}^{y-x}~\mathds{1}_{ \{ x>y \} }(x) \mathrm{d}x
						= \mathrm{e}^{y}~\frac{(-y)^{\ell+1}}{\ell+1} \cdot \mathrm{H}(-y).
	\end{align*}	
	Altogether, we get that
	\begin{align*}
		&\int_{-\infty}^{\infty} \left| x \right|^{\ell} \mathrm{e}^{-\left| x \right|} \mathrm{e}^{-\left| y-x \right|} \mathrm{d}x
		= I_{1}+I_{2}+I_{3}+I_{4} \\
		&= \frac{\mathrm{e}^{-\left| y \right|}}{\ell+1} \bigg\{\, \left| y \right|^{\ell+1} + \sum\limits_{j=0}^{\ell-1} 2^{-1-j} \cdot 														\frac{(\ell+1)!}{(\ell-j)!} \cdot \left| y \right|^{\ell-j} + \frac{(\ell+1)!}{2^{\ell}} \bigg\}\,.
	\end{align*}
	Since $ \int\limits_{-\infty}^{\infty} \left| x \right|^{\ell} \mathrm{e}^{- \left| x \right|} \mathrm{e}^{-\left| 0-x \right|} 							\mathrm{d}x = 2 \int\limits_{0}^{\infty} x^{\ell} \mathrm{e}^{-2x} \mathrm{d}x = \frac{\ell !}{2^{\ell}} $ the proof is complete.
\end{proof}

We can now prove Theorem \ref{Koeffizienten}. 

\begin{proof}[Proof of Theorem \ref{Koeffizienten}]
	Obviously, $ \xi^{\ell} \in L^{1}(\mathbb{R}) \cap L^{2}(\mathbb{R}) $ for each $ \ell \in \mathbb{N} $. We prove Theorem \ref{Koeffizienten} by 				induction on $ \ell \in \mathbb{N} $. For $ \ell = 1 $ we use the inverse operator $ \mathrsfs{F}^{-1} $ to show that
	\begin{equation*}
		(\mathrsfs{F} \xi)(w) = \sqrt{\frac{\pi}{2}} \cdot \mathrm{e}^{- \left| w \right|}, \quad w \in \mathbb{R}.
	\end{equation*}
	Integration by parts easily leads to
	\begin{equation*}
		\bigg(\, \mathrsfs{F}^{-1} \bigg\{\, \sqrt{\frac{\pi}{2}} \mathrm{e}^{- \left| w \right| } \bigg\}\, \bigg)\,(t) = \int_{0}^{\infty} 														\mathrm{e}^{-w}~\mathrm{cos}(tw) \mathrm{d}w = \frac{1}{1+t^{2}}, \quad t \in \mathbb{R}.
	\end{equation*}
	Suppose that the claim holds up to $ \ell $. We have to show that it still holds for $ \ell+1 $. Taken together, the well-known behaviour of the 				Fourier transform of the convolution of two functions in $ L^{1}(\mathbb{R}) \cap L^{2}(\mathbb{R}) $, the induction hypothesis, Lemma 									\ref{Faltungen}, and a comparison of coefficients yield \newpage
	\begin{align*}
		& \big(\, \mathrsfs{F} \{ \xi^{\ell+1} \} \big)\, (w)
		= \frac{1}{\sqrt{2 \pi}} \Big(\, (\mathrsfs{F} \{ \xi^{\ell} \}) \ast (\mathrsfs{F} \xi) \Big)\, (w) \\
		&= \frac{1}{\sqrt{2 \pi}} ~ \frac{\pi}{2^{\ell}} \sum\limits_{j=0}^{\ell-1} \frac{1}{2^{j}} \binom{\ell+j-1}{\ell-1} \frac{1}{(\ell-j-1)!} \Big(\, ( 				\mathrm{e}^{- \left| x \right|} \left| x \right|^{\ell-j-1} ) \ast (\mathrm{e}^{-\left| x \right|} ) \Big)\, (w) \\
		&= \frac{1}{\sqrt{2 \pi}} \frac{\pi}{2^{\ell}} \mathrm{e}^{-\left| w \right|} \sum\limits_{j=0}^{\ell-1} \frac{1}{2^{j}} \binom{\ell+j-1}{\ell-1} 				\frac{1}{(\ell-j)!} \\
		& \quad \cdot \bigg\{\, \left| w \right|^{\ell-j} + \sum\limits_{m=0}^{\ell-j-2} 2^{-1-m} \frac{(\ell-j)!}{(\ell-1-j-m)!} \left| w 												\right|^{\ell-1-j-m} + \frac{(\ell-j)!}{2^{\ell-j-1}} \bigg\}\, \\
		&= \frac{\mathrm{e}^{-\left| w \right|}}{\sqrt{2 \pi}} ~ \frac{\pi}{2^{\ell}} ~ \bigg\{\, \frac{1}{2^{\ell-1}} \sum\limits_{j=0}^{\ell-1} 									\binom{\ell+j-1}{\ell-1} + \sum\limits_{\beta=1}^{\ell} \frac{1}{2^{\ell-\beta}} ~ \frac{1}{\beta !} \sum\limits_{\alpha=0}^{\ell-\beta} 								\binom{\ell+\alpha-1}{\ell-1} \left| w \right|^{\beta} \bigg\}\, \\
		&= \frac{1}{\sqrt{2 \pi}} ~ \frac{\pi}{2^{\ell}}~\mathrm{e}^{-\left| w \right|} \bigg\{\, \frac{1}{2^{\ell-1}} \binom{2\ell-1}{\ell} + 											\sum\limits_{\beta=1}^{\ell} \frac{1}{2^{\ell-\beta}} ~ \frac{1}{\beta !} \binom{2\ell-\beta}{\ell} \left| w \right|^{\beta} \bigg\}\, \\
		&= \frac{1}{\sqrt{2 \pi}} ~ \frac{\pi}{2^{\ell}} ~ \mathrm{e}^{-\left| w \right|} \sum\limits_{j=0}^{\ell} \frac{1}{2^{j}} \binom{\ell+j}{\ell} 						\frac{1}{(\ell-j)!} \left| w \right|^{\ell-j}, \quad w \in \mathbb{R};
	\end{align*}
	note that $ \frac{1}{2^{\ell}} \binom{2 \ell}{\ell} = \frac{1}{2^{\ell}} \left\{ \binom{2 \ell - 1}{\ell - 1} + \binom{2 \ell - 1}{\ell} \right\} = 		\frac{1}{2^{\ell - 1}} \binom{2 \ell - 1}{\ell} $. This finishes the proof of Theorem \ref{Koeffizienten}.
\end{proof}

Now Proposition \ref{FourierFaktor} and Theorem \ref{Koeffizienten} lead to
\begin{corollary} \label{Integralkerne}
	For the functions
	\begin{equation*}
		k^{(\ell)} : \mathbb{R} \rightarrow \mathbb{R}, \quad
		x \mapsto k^{(\ell)} (x)
		= \frac{1}{2 \pi} \int_{-\infty}^{\infty} \psi_{\ell}(t) \mathrm{e}^{-\mathrm{i}tx} \mathrm{d}t, \quad \ell \in \mathbb{N},
	\end{equation*}
	one has
	\begin{align*}
		k^{(\ell)}(x) &= \frac{1}{\sqrt{2 \pi}} ~ \Big(\, \mathrsfs{F} \big\{ \xi^{\ell} \cdot \tilde{\psi}_{\ell} \big\} \Big)\, (x) \\
		&= \frac{1}{\pi} ~ \frac{1}{2^{\ell-1}} \sum\limits_{n=0}^{2 \ell} (-1)^{n} ~\binom{2 \ell}{n} ~\frac{\mathrm{d}^{n}}{\mathrm{d}x^{n}} \bigg(\, 						\big(\, \mathrm{e}^{- \left| w \right|}~p_{\ell}(|w|) \big)\, \ast \Big(\, \frac{\mathrm{sin}(w)}{w} \Big)\, \bigg)\,(x).
	\end{align*}
\end{corollary}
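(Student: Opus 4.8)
The plan is to read off both displayed equalities as a direct bookkeeping consequence of Proposition~\ref{FourierFaktor} and Theorem~\ref{Koeffizienten}. Recall first that $ \psi_{\ell} = \xi^{\ell} \cdot \tilde{\psi}_{\ell} $, which was already noted right after \eqref{Die explizite Funktion Psi als Symbol von S}: from $ \frac{1+\mathrm{i}t}{1-\mathrm{i}t} = \frac{(1+\mathrm{i}t)^{2}}{1+t^{2}} $ one gets $ \psi_{\ell}(t) = \big( \frac{1+\mathrm{i}t}{1-\mathrm{i}t} \big)^{\ell}\psi(t) = \xi^{\ell}(t)\,(1+\mathrm{i}t)^{2\ell}\psi(t) = \xi^{\ell}(t)\,\tilde{\psi}_{\ell}(t) $. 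Since $ |\psi_{\ell}| = 2 \cdot \mathds{1}_{[-1,1]} $, the function $ \psi_{\ell} $ is bounded with compact support, so it lies in $ L^{1}(\mathbb{R}) \cap L^{2}(\mathbb{R}) $, and the integral defining $ k^{(\ell)} $ is, up to the normalisation of $ \mathrsfs{F} $, nothing but $ \frac{1}{\sqrt{2\pi}}\big( \mathrsfs{F}\{\xi^{\ell}\,\tilde{\psi}_{\ell}\} \big)(x) $; indeed $ \frac{1}{2\pi}\int_{-\infty}^{\infty}\psi_{\ell}(t)\mathrm{e}^{-\mathrm{i}tx}\,\mathrm{d}t = \frac{\sqrt{2\pi}}{2\pi}(\mathrsfs{F}\psi_{\ell})(x) = \frac{1}{\sqrt{2\pi}}(\mathrsfs{F}\psi_{\ell})(x) $. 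This gives the first equality.

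For the second equality I would apply the convolution theorem for the Fourier transform of a product. Both $ \xi^{\ell} $ (noted in the proof of Theorem~\ref{Koeffizienten}) and $ \tilde{\psi}_{\ell} $ (noted just before Proposition~\ref{FourierFaktor}) lie in $ L^{1}(\mathbb{R}) \cap L^{2}(\mathbb{R}) $, so $ \mathrsfs{F}\{\xi^{\ell}\} $ and $ \mathrsfs{F}\tilde{\psi}_{\ell} $ lie in $ L^{2}(\mathbb{R}) $ by Plancherel's theorem, their convolution is a bounded continuous function, and $ \mathrsfs{F}\{\xi^{\ell}\,\tilde{\psi}_{\ell}\} = \frac{1}{\sqrt{2\pi}}\big( \mathrsfs{F}\{\xi^{\ell}\} \big) \ast \big( \mathrsfs{F}\tilde{\psi}_{\ell} \big) $. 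Then I would substitute $ \mathrsfs{F}\{\xi^{\ell}\}(w) = \frac{1}{\sqrt{2\pi}}\frac{\pi}{2^{\ell-1}}\mathrm{e}^{-|w|}p_{\ell}(|w|) $ from Theorem~\ref{Koeffizienten} and $ (\mathrsfs{F}\tilde{\psi}_{\ell})(w) = \sqrt{8/\pi}\,\sum_{n=0}^{2\ell}\binom{2\ell}{n}(-1)^{n}\frac{\mathrm{d}^{n}}{\mathrm{d}w^{n}}\frac{\sin(w)}{w} $ from Proposition~\ref{FourierFaktor}, pull the finite sum and the constants out of the convolution by its linearity, and multiply by the overall prefactor $ \frac{1}{\sqrt{2\pi}} $ from $ k^{(\ell)} = \frac{1}{\sqrt{2\pi}}\mathrsfs{F}\{\xi^{\ell}\,\tilde{\psi}_{\ell}\} $. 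Collecting the three factors of $ (2\pi)^{-1/2} $ together with $ \frac{\pi}{2^{\ell-1}} $ and $ \sqrt{8/\pi} $ produces the constant $ (2\pi)^{-3/2}\cdot\frac{\pi}{2^{\ell-1}}\cdot\sqrt{8/\pi} = \frac{1}{\pi}\frac{1}{2^{\ell-1}} $.

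It then remains to move the derivatives out of the convolution, i.e.\ to check that $ \big( (\mathrm{e}^{-|w|}p_{\ell}(|w|)) \ast \big( \frac{\mathrm{d}^{n}}{\mathrm{d}w^{n}}\frac{\sin(w)}{w} \big) \big)(x) = \frac{\mathrm{d}^{n}}{\mathrm{d}x^{n}}\big( (\mathrm{e}^{-|w|}p_{\ell}(|w|)) \ast \frac{\sin(w)}{w} \big)(x) $ for $ 0 \le n \le 2\ell $. Writing the convolution as $ \int_{-\infty}^{\infty}\mathrm{e}^{-|t|}p_{\ell}(|t|)\,\frac{\sin(x-t)}{x-t}\,\mathrm{d}t $, differentiation under the integral sign, iterated $ n $ times, is legitimate because $ \mathrm{e}^{-|w|}p_{\ell}(|w|)\in L^{1}(\mathbb{R}) $ and, by the second part of Proposition~\ref{FourierFaktor}, $ \frac{\sin(w)}{w} $ and each of its derivatives is bounded on $ \mathbb{R} $; hence near any fixed $ x $ the integrand and its $ x $-derivatives up to order $ n $ admit a common $ L^{1} $ majorant in $ t $. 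Assembling the pieces gives the asserted formula for every $ \ell \in \mathbb{N} $.

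There is no genuinely hard step: the corollary is pure bookkeeping on top of the two preceding results. The one point deserving care is that $ \mathrsfs{F}\tilde{\psi}_{\ell} $ decays only like $ 1/|x| $ and is \emph{not} in $ L^{1}(\mathbb{R}) $, so the convolution theorem has to be used in its $ L^{2} $ (Plancherel) form rather than the naive $ L^{1} $ form, and one must keep the three $ (2\pi)^{\pm 1/2} $ factors straight so that the final constant comes out as $ \frac{1}{\pi\,2^{\ell-1}} $.
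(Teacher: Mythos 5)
Your proposal is correct and follows exactly the route the paper intends: the paper gives no written proof, simply asserting that Proposition \ref{FourierFaktor} and Theorem \ref{Koeffizienten} "lead to" the corollary via the factorization $\psi_{\ell}=\xi^{\ell}\tilde{\psi}_{\ell}$ and the convolution theorem, which is precisely what you carry out (with the constants checking out to $\frac{1}{\pi}\frac{1}{2^{\ell-1}}$). Your added care about using the $L^{2}$ form of the convolution theorem and about interchanging $\frac{\mathrm{d}^{n}}{\mathrm{d}x^{n}}$ with the convolution is a sound filling-in of details the paper leaves implicit.
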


We denote by $ \mathrsfs{D}(0,\infty) $ the set of all test functions on $ (0,\infty) $, i.\,e. the set of all infinitely differentiable functions on $ (0,\infty) $ with compact support.

\begin{theorem} \label{Definition der Operatoren K und Beweis einiger Eigenschaften}
	Let $ \ell \in \mathbb{N}_{0} $. Consider the integral operator $ K^{(\ell)} : L^{2}(0,\infty) \rightarrow L^{2}(0,\infty) $ with kernel function 
	$ k^{(\ell)} (x+y) $,
	\begin{equation*}
		\big( K^{(\ell)} f \big)\, (x) = \int_{0}^{\infty} k^{(\ell)}(x+y)~f(y) \mathrm{d}y, \quad
		f \in \mathrsfs{D}(0,\infty), \quad x > 0,
	\end{equation*}
	on the dense subspace $ \mathrsfs{D}(0,\infty) \subset L^{2}(0,\infty) $. Recall that $ k^{(0)}(x+y) = \frac{2}{\pi} ~ \frac{\sin(x+y)}{x+y} $. Then 		$ K^{(\ell)} $ is a bounded self-adjoint Hankel operator which is unitarily equivalent to the Hankel operator $ S_{\psi_{\ell}} $ with symbol 
	$ \psi_{\ell} $ on $ H^{2}(\mathbb{R}) $, and $ S_{\psi_{\ell}} $ is unitarily equivalent to the Hankel operator $ S_{z^{\ell +1} \phi} $ on 
	$ H^{2} $, for all $ \ell \in \mathbb{N}_{0} $. Here $ \phi $ denotes the function which is defined as in (\ref{Definition der 	Funktion Phi}).
\end{theorem}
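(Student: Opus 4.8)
The plan is to realize $S_{\psi_{\ell}}$ as a Hankel integral operator on $L^{2}(0,\infty)$ by conjugating it with the Fourier transform, to recognise the result as $K^{(\ell)}$, and then to read off boundedness and self-adjointness. The unitary equivalence of $S_{\psi_{\ell}}$ and $S_{z^{\ell+1}\phi}$ on $H^{2}$ has already been established in the discussion around (\ref{Die explizite Funktion Psi als Symbol von S}) via Power \cite{Power}, p.\,14, with $\phi$ as in (\ref{Definition der Funktion Phi}); so the new content is the identification $K^{(\ell)}\cong S_{\psi_{\ell}}$.

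First I would record the elementary properties of the symbol. Since $\bigl|\tfrac{1+\mathrm{i}t}{1-\mathrm{i}t}\bigr|=1$, the function $\psi_{\ell}$ lies in $L^{1}(\mathbb{R})\cap L^{2}(\mathbb{R})\cap L^{\infty}(\mathbb{R})$ with $\|\psi_{\ell}\|_{\infty}=2$. Hence $S_{\psi_{\ell}}=QRM_{\psi_{\ell}}|_{H^{2}(\mathbb{R})}$, being a composition of contractions with a multiplication operator, is bounded with $\|S_{\psi_{\ell}}\|\le 2$; and by Corollary \ref{Integralkerne} the function $k^{(\ell)}=(2\pi)^{-1/2}\,\mathrsfs{F}\psi_{\ell}$ is bounded and continuous, so that $\int_{0}^{\infty}k^{(\ell)}(x+y)f(y)\,\mathrm{d}y$ converges absolutely for every $f\in\mathrsfs{D}(0,\infty)$ and the operator $K^{(\ell)}$ is well defined on $\mathrsfs{D}(0,\infty)$.

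The core step is the conjugation. By the Paley--Wiener description, $H^{2}(\mathbb{R})$ consists of those $g\in L^{2}(\mathbb{R})$ whose Fourier transform vanishes almost everywhere off one of the two half-lines, so the Fourier transform --- composed with extension by zero and a reflection, arranged to match the convention of \cite{Power} --- yields a unitary $\Lambda\colon L^{2}(0,\infty)\to H^{2}(\mathbb{R})$. For $f\in\mathrsfs{D}(0,\infty)$ I would compute $\Lambda^{-1}S_{\psi_{\ell}}\Lambda f$ by pushing $\mathrsfs{F}$ through the three factors of $S_{\psi_{\ell}}$: $\mathrsfs{F}$ intertwines the projection $Q$ onto $H^{2}(\mathbb{R})$ with multiplication by the indicator function of a half-line; it intertwines the reflection $R$ with the reflection $w\mapsto -w$ on the Fourier side --- this is the step that replaces the difference $x-y$ by the sum $x+y$ ---; and it intertwines $M_{\psi_{\ell}}$ with convolution by $(2\pi)^{-1/2}\mathrsfs{F}\psi_{\ell}$. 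Collecting the three contributions and using $k^{(\ell)}=(2\pi)^{-1/2}\mathrsfs{F}\psi_{\ell}$ one obtains $(\Lambda^{-1}S_{\psi_{\ell}}\Lambda f)(x)=\int_{0}^{\infty}k^{(\ell)}(x+y)f(y)\,\mathrm{d}y=(K^{(\ell)}f)(x)$ for all $x>0$. Since $\mathrsfs{D}(0,\infty)$ is dense in $L^{2}(0,\infty)$ and $\Lambda^{-1}S_{\psi_{\ell}}\Lambda$ is bounded, it follows that $K^{(\ell)}$ extends uniquely to the bounded operator $\Lambda^{-1}S_{\psi_{\ell}}\Lambda$; in particular $K^{(\ell)}$ is bounded and unitarily equivalent to $S_{\psi_{\ell}}$.

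Finally, self-adjointness: by Corollary \ref{Integralkerne} the function $k^{(\ell)}$ is real-valued, so the kernel $k^{(\ell)}(x+y)$ is real and symmetric in $x$ and $y$, and a bounded integral operator with a real symmetric kernel is self-adjoint. That $K^{(\ell)}$ is a Hankel operator, and that $S_{\psi_{\ell}}$ is unitarily equivalent to $S_{z^{\ell+1}\phi}$ on $H^{2}$, is precisely what was recalled above from (\ref{Die explizite Funktion Psi als Symbol von S}) and \cite{Power}. I expect the main obstacle to be the bookkeeping in the conjugation --- fixing the Paley--Wiener/Hardy-space convention and the placement of the reflection $R$ so that the kernel comes out as $k^{(\ell)}(x+y)$ with the correct sign inside $k^{(\ell)}$ rather than as $k^{(\ell)}\bigl(-(x+y)\bigr)$ --- together with making the convolution identities rigorous on the function spaces in play.
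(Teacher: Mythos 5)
Your proposal is correct and follows essentially the same route as the paper: both identify $k^{(\ell)}=(2\pi)^{-1/2}\mathrsfs{F}\psi_{\ell}$ via Corollary \ref{Integralkerne}, realize $K^{(\ell)}$ as the Fourier-conjugated (Hankel integral operator) form of $S_{\psi_{\ell}}$ on $L^{2}(0,\infty)$, get self-adjointness from the real symmetric kernel, and cite Power, p.\,14, for the equivalence with $S_{z^{\ell+1}\phi}$. You merely spell out the Paley--Wiener conjugation that the paper leaves implicit in its ``it follows'' step.
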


\begin{proof}
	Let $ \ell \in \mathbb{N} $. According to Theorem \ref{Gesammelte Eigenschaften der Funktionen k} below, the function $ k^{(\ell)} $ is continuous on 
	$ (0,\infty) $, and one has $ k^{(\ell)}(x) = \mathcal{O}(x^{-1}) $ both as $ x \rightarrow 0+ $ and as $ x \rightarrow \infty $. Obviously, the 				function $ \psi_{\ell} = \xi^{\ell} \cdot \tilde{\psi}_{\ell} $ defined as in (\ref{Die explizite Funktion Psi als Symbol von S}) is 
	in $ L^{1}(\mathbb{R}) \cap L^{2}(\mathbb{R}) \cap L^{\infty}(\mathbb{R}) $, and according to Corollary \ref{Integralkerne} one has
	\begin{equation*}
		\frac{1}{\sqrt{2 \pi}} \mathrsfs{F} \psi_{\ell}
		= \frac{1}{\sqrt{2 \pi}} \mathrsfs{F} \{ \xi^{\ell} \cdot \tilde{\psi}_{\ell} \}
		= k^{(\ell)}
	\end{equation*}
	on the whole of $ \mathbb{R} $. It follows that the integral operator $ K^{(\ell)} $ with kernel function $ k^{(\ell)}(x+y) $ is the uniquely defined 	bounded Hankel operator on $ L^{2}(0,\infty) $ which is defined by
	\begin{equation*}
		\big( K^{(\ell)} f \big)\, (x) = \int_{0}^{\infty} k^{(\ell)}(x+y)~f(y) \mathrm{d}y, \quad
		f \in \mathrsfs{D}(0,\infty), \quad x > 0,
	\end{equation*}
	on the dense subspace $ \mathrsfs{D}(0,\infty) \subset L^{2}(0,\infty) $.
	According to Corollary \ref{Integralkerne} the function $ k^{(\ell)} $ is real-valued so $ \overline{k^{(\ell)}(y+x)} = k^{(\ell)}(x+y) $ for all $ 		x,y > 0 $. Therefore, $ K^{(\ell)} $ is self-adjoint. Now $ K^{(\ell)} $ is unitarily equivalent to $ S_{\psi_{\ell}} $ with symbol $ \psi_{\ell} $ on 	$ H^{2}(\mathbb{R}) $, and $ S_{\psi_{\ell}} $ is unitarily equivalent to $ S_{z^{\ell +1} \phi} $ on $ H^{2} $ by Power \cite{Power}, p.\,14. 
	
	In case that $ \ell = 0 $ the proof runs analogously.
\end{proof}

We denote by $ \ell_{+}^{2} $ the Hilbert space of all complex square-summable one-sided sequences $ x = (x_{0},x_{1},...) $.

\begin{lemma} \label{Fourierkoeffizienten}
	Let $ \phi $ be defined as in (\ref{Definition der Funktion Phi}) and
	\begin{equation*}
		c_{k} = \frac{1}{2 \pi} \int_{0}^{2 \pi} \mathrm{e}^{\mathrm{i}k \theta} \phi\left( \mathrm{e}^{\mathrm{i} \theta} \right) \mathrm{d}\theta
					= \frac{2}{\pi k} \mathrm{sin}(\pi k/2), \quad k \in \mathbb{N}.
	\end{equation*}	
	Then for each $ x \in \ell_{+}^{2} $
	\begin{equation*}
		\left( S_{z^{\ell +1} \phi} x \right)_{n} = \sum\limits_{k=0}^{\infty} c_{k+\ell+n+1} x_{k}, \quad
		\ell \in \mathbb{N}_{0},
	\end{equation*}
	holds true.
\end{lemma}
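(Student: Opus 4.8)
The plan is to prove the two assertions of the lemma separately: first the explicit evaluation of the Fourier coefficients $ c_{k} $, and then the matrix representation of $ S_{z^{\ell+1}\phi} $ in the standard monomial basis of $ H^{2} $.

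For the first part I would simply observe that, by (\ref{Definition der Funktion Phi}), $ \phi(\mathrm{e}^{\mathrm{i}\theta}) = 2 $ exactly when $ \operatorname{Re}(\mathrm{e}^{\mathrm{i}\theta}) = \cos\theta \geq 0 $, that is, for $ \theta $ congruent modulo $ 2\pi $ to a point of $ [-\pi/2,\pi/2] $, and $ \phi(\mathrm{e}^{\mathrm{i}\theta}) = 0 $ otherwise. Hence
\begin{equation*}
	c_{k} = \frac{1}{2\pi}\int_{-\pi/2}^{\pi/2} 2\,\mathrm{e}^{\mathrm{i}k\theta}\,\mathrm{d}\theta = \frac{1}{\pi}\cdot\frac{\mathrm{e}^{\mathrm{i}k\pi/2}-\mathrm{e}^{-\mathrm{i}k\pi/2}}{\mathrm{i}k} = \frac{2}{\pi k}\,\sin(\pi k/2), \quad k \in \mathbb{N};
\end{equation*}
this is an elementary computation. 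I note for later use that the right-hand side is an even function of $ k $, so that $ c_{-k} = c_{k} $ whenever both sides are defined.

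For the second part I would identify $ H^{2} $ with $ \ell_{+}^{2} $ via the orthonormal basis $ e_{n}(z) = z^{n} $, $ n \in \mathbb{N}_{0} $, so that it suffices to compute the matrix entries $ \langle S_{z^{\ell+1}\phi}\,e_{k},\,e_{n}\rangle $ for all $ k,n \in \mathbb{N}_{0} $ and then sum against $ x = \sum_{k} x_{k}e_{k} $; this is legitimate because $ z^{\ell+1}\phi \in L^{\infty}(\mathbb{S}^{1}) $, so $ S_{z^{\ell+1}\phi} $ is bounded. The key observation is that $ \phi $ is invariant under $ z \mapsto \bar z $, since $ \operatorname{Re}(\bar z) = \operatorname{Re}(z) $; consequently, on $ \mathbb{S}^{1} $,
\begin{equation*}
	\big(J M_{z^{\ell+1}\phi}\,e_{k}\big)(z) = \big(J(z^{k+\ell+1}\phi)\big)(z) = \bar z^{\,k+\ell+1}\,\phi(\bar z) = z^{-(k+\ell+1)}\,\phi(z).
\end{equation*}
Expanding $ \phi $ in its $ L^{2} $-convergent Fourier series $ \phi(z) = \sum_{j\in\mathbb{Z}} c_{-j}\,z^{j} $ (the Fourier coefficient of $ z^{j} $ being $ \langle\phi,z^{j}\rangle = c_{-j} $), multiplying by $ z^{-(k+\ell+1)} $, and applying the orthogonal projection $ P $ onto $ H^{2} $, which retains precisely the monomials $ z^{m} $ with $ m \geq 0 $, I can read off the coefficient of $ z^{n} $ and obtain $ \langle S_{z^{\ell+1}\phi}\,e_{k},\,e_{n}\rangle = c_{-(n+k+\ell+1)} = c_{n+k+\ell+1} $. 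Summing against $ x = \sum_{k} x_{k} e_{k} $ and using continuity of $ S_{z^{\ell+1}\phi} $ then gives $ (S_{z^{\ell+1}\phi}\,x)_{n} = \sum_{k} \langle S_{z^{\ell+1}\phi}\,e_{k},\,e_{n}\rangle\,x_{k} = \sum_{k=0}^{\infty} c_{k+\ell+n+1}\,x_{k} $, the resulting scalar series converging since $ S_{z^{\ell+1}\phi}x \in \ell_{+}^{2} $.

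No step here is genuinely hard. The only places where care is needed are the conjugation symmetry of $ \phi $ — this is what makes the flip operator $ J $ act cleanly on $ z^{m}\phi $ — and keeping the index bookkeeping in the Fourier expansion and the projection step consistent, in particular the direction of the exponent convention; I would sanity-check the final index shift against $ \ell = 0 $ and small $ n $ before committing to it.
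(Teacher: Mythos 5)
Your proof is correct and is exactly the ``straightforward'' computation the paper omits: the elementary evaluation of $ c_{k} $ over the arc $ \{\cos\theta \geq 0\} $, and the matrix entries of $ S_{z^{\ell+1}\phi} = PJM_{z^{\ell+1}\phi}|_{H^{2}} $ obtained from the conjugation-invariance of $ \phi $, the Fourier expansion, and the evenness $ c_{-k} = c_{k} $. The index bookkeeping checks out, and since $ k+\ell+n+1 \geq 1 $ the undefined value $ c_{0} $ is never needed.
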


\begin{proof}
	This is straightforward.
\end{proof}

Let $ \mathcal{P}_{+} $ and $ \mathcal{P}_{-} $ be the orthogonal projections in $ \ell_{+}^{2} $ onto $ \mathcal{L}_{+} $ and $ \mathcal{L}_{-} $, respectively, where
\begin{align*}
	\mathcal{L}_{+} = \left\{ x \in \ell_{+}^{2} : x_{2k+1} = 0 \text{ for all } k \in \mathbb{N}_{0} \right\}, \\
	\mathcal{L}_{-} = \left\{ x \in \ell_{+}^{2} : x_{2k} = 0 \text{ for all } k \in \mathbb{N}_{0} \right\}.
\end{align*}

Let $ p \leq 1/2 $ be a real number. Consider the operators $ H_{p} : \ell_{+}^{2} \rightarrow \ell_{+}^{2} $ and $ \widetilde{H}_{p} : \ell_{+}^{2} \rightarrow \ell_{+}^{2} $ defined by
	\begin{equation*}
		\left( H_{p} x \right)_{n} = \sum\limits_{k=0}^{\infty} \frac{x_{k}}{1+k+n-p}
		~ \text{and} ~
		\left( \widetilde{H}_{p} x \right)_{n} = \sum\limits_{k=0}^{\infty} \frac{(-1)^{k+n}}{1+k+n-p} x_{k}, \quad x \in \ell_{+}^{2}, 
	\end{equation*}
respectively. Both the operators $ H_{p} $ and $ \widetilde{H}_{p} $ are bounded and self-adjoint, and one has $ V \widetilde{H}_{p} V = H_{p} $ where
\begin{equation} \label{Definition des unitaeren Operators V}
	V : \ell_{+}^{2} \rightarrow \ell_{+}^{2}, \quad \left( x_{0},x_{1},x_{2},x_{3},... \right) \mapsto \left( x_{0},-x_{1},x_{2},-x_{3},... \right),
\end{equation}
is unitary and $ V^{-1} = V $.

First, consider the case when $ \ell \in \mathbb{N}_{0} $ is even, i.\,e. $ \ell = 2m,~m \in \mathbb{N}_{0} $. With Lemma \ref{Fourierkoeffizienten}, it is easy to compute that
\begin{align*}
	\left( \mathcal{P}_{+} S_{z^{2m+1} \phi} \mathcal{P}_{+} x \right)_{2n} = \frac{1}{\pi} \sum\limits_{k=0}^{\infty} \frac{(-1)^{k+m+n}}{k+m+n+1/2} 																																								x_{2k}, \\
	\left( \mathcal{P}_{-} S_{z^{2m+1} \phi} \mathcal{P}_{-} x \right)_{2n+1} = -\frac{1}{\pi} \sum\limits_{k=0}^{\infty} \frac{(-1)^{k+m+n}}{k+m+n+3/2} 																																								x_{2k+1}, \\
	\mathcal{P}_{+} S_{z^{2m+1} \phi} \mathcal{P}_{-} = 0 = \mathcal{P}_{-} S_{z^{2m+1} \phi} \mathcal{P}_{+}.
\end{align*}
Define the operators
\begin{equation*}
	\mathcal{U}_{+} : \ell_{+}^{2} \rightarrow \mathcal{U}_{+}(\ell_{+}^{2}), \quad x = \left( x_{0},x_{1},x_{2},... \right) \mapsto \left( 																																																		x_{0},0,x_{1},0,x_{2},0,...	\right),
\end{equation*}

and
\begin{equation*}
	\mathcal{U}_{-} : \ell_{+}^{2} \rightarrow \mathcal{U}_{-}(\ell_{+}^{2}), \quad x = \left( x_{0},x_{1},x_{2},... \right) \mapsto 
		\left( 0,x_{0},0,x_{1},0,x_{2},0,... \right).
\end{equation*}
Both $ \mathcal{U}_{+} $ and $ \mathcal{U}_{-} $ are unitary, and it is easy to see that
\begin{align*}
	\mathcal{P}_{+} S_{z^{2m+1} \phi} \mathcal{P}_{+} \text{ leaves invariant } \mathcal{U}_{+}(\ell_{+}^{2}), \\
	\mathcal{P}_{-} S_{z^{2m+1} \phi} \mathcal{P}_{-} \text{ leaves invariant } \mathcal{U}_{-}(\ell_{+}^{2}).
\end{align*}
One has
\begin{align*}
	V \circ \mathcal{U}_{\pm}^{-1} \circ \left( \mathcal{P}_{\pm} S_{z^{2m+1} \phi} \mathcal{P}_{\pm} \right) \circ \mathcal{U}_{\pm} \circ V
	= \pm \frac{(-1)^{m}}{\pi} H_{\pm \frac{1}{2}-m}
\end{align*}
where $ V $ is defined as in (\ref{Definition des unitaeren Operators V}). Since the operator $ \mathcal{U} : \ell_{+}^{2} \rightarrow \mathcal{U}_{+}(\ell_{+}^{2}) \oplus \mathcal{U}_{-}(\ell_{+}^{2}) $ defined by $ \mathcal{U}x = \mathcal{P}_{+}x \oplus \mathcal{P}_{-}x $ is unitary we have shown:

\begin{lemma} \label{Hauptsatz Beweis Lemma I}
	For all $ m \in \mathbb{N}_{0} $ the operator $ S_{z^{2m+1} \phi} : \ell_{+}^{2} \rightarrow \ell_{+}^{2} $ is unitarily equivalent to the 							operator $ \left( \frac{(-1)^{m}}{\pi} H_{\frac{1}{2} - m} \right) \oplus \left( \frac{(-1)^{m+1}}{\pi} H_{-\frac{1}{2} - m} \right) : \ell_{+}^{2} 		\oplus \ell_{+}^{2} \rightarrow \ell_{+}^{2} \oplus \ell_{+}^{2} $.
\end{lemma}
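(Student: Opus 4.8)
The plan is to show that $S_{z^{2m+1}\phi}$ is block diagonal with respect to the parity decomposition $\ell_{+}^{2} = \mathcal{L}_{+}\oplus\mathcal{L}_{-}$, and that after squeezing each half back onto $\ell_{+}^{2}$ with $\mathcal{U}_{\pm}$ and conjugating with the sign change $V$, the two diagonal blocks become $\frac{(-1)^{m}}{\pi}H_{\frac{1}{2}-m}$ and $\frac{(-1)^{m+1}}{\pi}H_{-\frac{1}{2}-m}$.

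First I would use Lemma \ref{Fourierkoeffizienten}, which gives $\left(S_{z^{2m+1}\phi}x\right)_{n} = \sum_{k\ge 0} c_{k+2m+n+1}x_{k}$ with $c_{j} = \frac{2}{\pi j}\sin(\pi j/2)$. The elementary point is that $\sin(\pi j/2)$ vanishes for even $j$ and equals $(-1)^{(j-1)/2}$ for odd $j$; since $k+2m+n+1$ is odd exactly when $k$ and $n$ have the same parity, all matrix entries of $S_{z^{2m+1}\phi}$ joining an even index to an odd index vanish, so $\mathcal{P}_{+}S_{z^{2m+1}\phi}\mathcal{P}_{-} = 0 = \mathcal{P}_{-}S_{z^{2m+1}\phi}\mathcal{P}_{+}$. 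Restricting to even indices $n\to 2n$, $k\to 2k$ and to odd indices $n\to 2n+1$, $k\to 2k+1$, and substituting $j = 2(k+m+n)+1$ resp.\ $j = 2(k+m+n)+3$ into the formula for $c_{j}$, yields the two expressions for $\mathcal{P}_{+}S_{z^{2m+1}\phi}\mathcal{P}_{+}$ and $\mathcal{P}_{-}S_{z^{2m+1}\phi}\mathcal{P}_{-}$ displayed just before the lemma.

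Next I would transport these blocks to $\ell_{+}^{2}$. Since $\mathcal{P}_{+}S_{z^{2m+1}\phi}\mathcal{P}_{+}$ leaves $\mathcal{U}_{+}(\ell_{+}^{2})$ invariant and $\mathcal{P}_{-}S_{z^{2m+1}\phi}\mathcal{P}_{-}$ leaves $\mathcal{U}_{-}(\ell_{+}^{2})$ invariant, conjugating by the unitaries $\mathcal{U}_{\pm}:\ell_{+}^{2}\to\mathcal{U}_{\pm}(\ell_{+}^{2})$ produces operators on $\ell_{+}^{2}$ whose matrix entries are $\frac{1}{\pi}\frac{(-1)^{k+m+n}}{k+m+n+1/2}$ and $-\frac{1}{\pi}\frac{(-1)^{k+m+n}}{k+m+n+3/2}$; pulling out the common factor $(-1)^{m}$ identifies them as $\frac{(-1)^{m}}{\pi}\widetilde{H}_{\frac{1}{2}-m}$ and $\frac{(-1)^{m+1}}{\pi}\widetilde{H}_{-\frac{1}{2}-m}$. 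Conjugating once more by $V$ from (\ref{Definition des unitaeren Operators V}) and using the identity $V\widetilde{H}_{p}V = H_{p}$ recorded above then gives $V\circ\mathcal{U}_{\pm}^{-1}\circ\left(\mathcal{P}_{\pm}S_{z^{2m+1}\phi}\mathcal{P}_{\pm}\right)\circ\mathcal{U}_{\pm}\circ V = \pm\frac{(-1)^{m}}{\pi}H_{\pm\frac{1}{2}-m}$. Finally, since $\mathcal{U}x = \mathcal{P}_{+}x\oplus\mathcal{P}_{-}x$ is a unitary $\ell_{+}^{2}\to\mathcal{U}_{+}(\ell_{+}^{2})\oplus\mathcal{U}_{-}(\ell_{+}^{2})$, composing it with the block-diagonal unitary assembled from $\mathcal{U}_{\pm}^{-1}$ and $V$ gives a single unitary $\ell_{+}^{2}\to\ell_{+}^{2}\oplus\ell_{+}^{2}$ that implements the asserted equivalence.

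I expect the only real difficulty to be bookkeeping: carrying the parity reindexing and the three successive conjugations ($\mathcal{P}_{\pm}$ splitting, $\mathcal{U}_{\pm}$ squeezing, $V$ sign change) through without sign errors. Once it is observed that $c_{j} = 0$ for even $j$, no genuine analysis enters; everything reduces to the index manipulations already performed in the paragraphs preceding the lemma.
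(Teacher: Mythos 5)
Your proposal is correct and follows essentially the same route as the paper, whose proof of this lemma is precisely the chain of computations displayed before its statement: the parity splitting via $\mathcal{P}_{\pm}$ using Lemma \ref{Fourierkoeffizienten}, the squeezing by $\mathcal{U}_{\pm}$, the conjugation by $V$ with $V\widetilde{H}_{p}V=H_{p}$, and the assembly via $\mathcal{U}x=\mathcal{P}_{+}x\oplus\mathcal{P}_{-}x$. Your added observation that $c_{j}=0$ for even $j$ and $c_{j}=\frac{2}{\pi j}(-1)^{(j-1)/2}$ for odd $j$ correctly supplies the detail the paper leaves as ``easy to compute.''
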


Now consider the case when $ \ell \in \mathbb{N}_{0} $ is odd, i.\,e. $ \ell = 2m+1 $, $ m \in \mathbb{N}_{0} $. With Lemma \ref{Fourierkoeffizienten}, it is easy to check that
\begin{align*}
	\mathcal{P}_{+} S_{z^{2m+2} \phi} \mathcal{P}_{+} = 0 = \mathcal{P}_{-} S_{z^{2m+2} \phi} \mathcal{P}_{-}, \\
	\left( \mathcal{P}_{+} S_{z^{2m+2} \phi} \mathcal{P}_{-} x \right)_{2n} = -\frac{1}{\pi} \sum\limits_{k=0}^{\infty} \frac{(-1)^{k+m+n}}{k+m+n+3/2} 																																								x_{2k+1}, \\
	\left( \mathcal{P}_{-} S_{z^{2m+2} \phi} \mathcal{P}_{+} x \right)_{2n+1} = -\frac{1}{\pi} \sum\limits_{k=0}^{\infty} \frac{(-1)^{k+m+n}}{k+m+n+3/2} 																																								x_{2k}.
\end{align*}
Let $ x \oplus y \in \ell_{+}^{2} \oplus \ell_{+}^{2} $. One has
\begin{align*}
	&\begin{bmatrix}
		V & 0 \\
		0 & V
	\end{bmatrix}
	\begin{bmatrix}
		\mathcal{U}_{+}^{-1} & 0 \\
		0 & \mathcal{U}_{-}^{-1}
	\end{bmatrix}
	\begin{bmatrix}
		0 & \mathcal{P}_{+} S_{z^{2m+2} \phi} \mathcal{P}_{-} \\
		\mathcal{P}_{-} S_{z^{2m+2} \phi} \mathcal{P}_{+} & 0
	\end{bmatrix}
	\begin{bmatrix}
		\mathcal{U}_{+} & 0 \\
		0 & \mathcal{U}_{-}
	\end{bmatrix}
	\begin{bmatrix}
		V & 0 \\
		0 & V
	\end{bmatrix}
	\begin{bmatrix}
		x \\ y
	\end{bmatrix} \\
	&=
	\begin{bmatrix}
		0 & \frac{(-1)^{m+1}}{\pi} H_{-\frac{1}{2}-m} \\
		\frac{(-1)^{m+1}}{\pi} H_{-\frac{1}{2}-m} & 0
	\end{bmatrix}
	\begin{bmatrix}
		x \\ y
	\end{bmatrix}.
\end{align*}

We denote by $ I $ the identity operator $ \ell_{+}^{2} \rightarrow \ell_{+}^{2} $.
Obviously, the operator
\begin{equation*}
	\frac{1}{\sqrt{2}}
	\begin{bmatrix}
		I & -I \\
		I & I
	\end{bmatrix}
	: \ell_{+}^{2} \oplus \ell_{+}^{2} \rightarrow \ell_{+}^{2} \oplus \ell_{+}^{2}
\end{equation*}
is unitary and its inverse is given by
\begin{equation*}
	\frac{1}{\sqrt{2}}
	\begin{bmatrix}
		I & I \\
		-I & I
	\end{bmatrix}
	: \ell_{+}^{2} \oplus \ell_{+}^{2} \rightarrow \ell_{+}^{2} \oplus \ell_{+}^{2}.
\end{equation*}
Since
\begin{align*}
	&\frac{1}{\sqrt{2}}
	\begin{bmatrix}
		I & I \\
		-I & I
	\end{bmatrix}
	\begin{bmatrix}
		0 & \frac{(-1)^{m+1}}{\pi} H_{-\frac{1}{2}-m} \\
		\frac{(-1)^{m+1}}{\pi} H_{-\frac{1}{2}-m} & 0
	\end{bmatrix}
	\frac{1}{\sqrt{2}}
	\begin{bmatrix}
		I & -I \\
		I & I
	\end{bmatrix} \\
	&=
	\begin{bmatrix}
		\frac{(-1)^{m+1}}{\pi} H_{-\frac{1}{2}-m} & 0 \\
		0 & \frac{(-1)^{m}}{\pi} H_{-\frac{1}{2}-m}
	\end{bmatrix}
\end{align*}
we have shown:
\begin{lemma} \label{Hauptsatz Beweis Lemma II}
	For all $ m \in \mathbb{N}_{0} $ the operator $ S_{z^{2m+2} \phi} : \ell_{+}^{2} \rightarrow \ell_{+}^{2} $ is unitarily equivalent to the operator
	$ \left( \frac{(-1)^{m+1}}{\pi} H_{-\frac{1}{2}-m} \right) \oplus \left( \frac{(-1)^{m}}{\pi} H_{-\frac{1}{2}-m} \right) : \ell_{+}^{2} \oplus 					\ell_{+}^{2} \rightarrow \ell_{+}^{2} \oplus \ell_{+}^{2} $.	
\end{lemma}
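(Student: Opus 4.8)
The proof will consist of chaining together the computations carried out in the paragraphs preceding the statement. Since $\ell = 2m+1$ is odd, $S_{z^{\ell+1}\phi} = S_{z^{2m+2}\phi}$, so it suffices to treat the latter. First I would record, from Lemma \ref{Fourierkoeffizienten} and the identity $c_k = \frac{2}{\pi k}\sin(\pi k/2)$ (which vanishes whenever $k$ is even), that the two ``diagonal'' compressions vanish, $\mathcal{P}_+ S_{z^{2m+2}\phi}\mathcal{P}_+ = 0 = \mathcal{P}_- S_{z^{2m+2}\phi}\mathcal{P}_-$, while the two ``off-diagonal'' compressions are the Hankel-type matrices displayed above; the relevant point is the evaluation $\sin\!\big(\tfrac{\pi}{2}(2k+2m+2n+3)\big) = -(-1)^{k+m+n}$, which produces the coefficient $-\frac{1}{\pi}\,\frac{(-1)^{k+m+n}}{k+m+n+3/2}$ in both $\big(\mathcal{P}_+ S_{z^{2m+2}\phi}\mathcal{P}_- x\big)_{2n}$ and $\big(\mathcal{P}_- S_{z^{2m+2}\phi}\mathcal{P}_+ x\big)_{2n+1}$.

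Next I would pass to the block decomposition. The operator $\mathcal{U}: \ell_+^2 \to \mathcal{U}_+(\ell_+^2)\oplus\mathcal{U}_-(\ell_+^2)$, $\mathcal{U}x = \mathcal{P}_+ x\oplus\mathcal{P}_- x$, is unitary, and since the diagonal compressions vanish it conjugates $S_{z^{2m+2}\phi}$ to the block-antidiagonal operator with entries $\mathcal{P}_+ S_{z^{2m+2}\phi}\mathcal{P}_-$ and $\mathcal{P}_- S_{z^{2m+2}\phi}\mathcal{P}_+$. Conjugating this further by $\mathrm{diag}(V,V)\,\mathrm{diag}(\mathcal{U}_+^{-1},\mathcal{U}_-^{-1})$ on one side and the corresponding inverse on the other, and doing the index bookkeeping --- $x\mapsto \mathcal{U}_\mp V x$ places $(-1)^k x_k$ at position $2k+1$ (resp. $2k$), and $V\mathcal{U}_+^{-1}$ extracts the even-indexed entries with alternating signs --- turns each off-diagonal block into $\frac{(-1)^{m+1}}{\pi}H_{-\frac{1}{2}-m}$, using that $1-(-\frac{1}{2}-m) = m+\frac{3}{2}$ matches the denominator $k+n+m+3/2$; the two blocks coincide because the whole operator is self-adjoint and $H_{-\frac{1}{2}-m}$ is self-adjoint. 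This is precisely the displayed matrix identity just before the statement.

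Finally I would conjugate the resulting block-antidiagonal operator $\begin{bmatrix} 0 & B \\ B & 0\end{bmatrix}$, $B = \frac{(-1)^{m+1}}{\pi}H_{-\frac{1}{2}-m}$, by the unitary $\frac{1}{\sqrt{2}}\begin{bmatrix} I & -I \\ I & I\end{bmatrix}$ with inverse $\frac{1}{\sqrt{2}}\begin{bmatrix} I & I \\ -I & I\end{bmatrix}$; the $2\times 2$ block computation yields $\begin{bmatrix} B & 0 \\ 0 & -B\end{bmatrix} = \big(\frac{(-1)^{m+1}}{\pi}H_{-\frac{1}{2}-m}\big)\oplus\big(\frac{(-1)^{m}}{\pi}H_{-\frac{1}{2}-m}\big)$. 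Composing all the unitaries involved gives the asserted unitary equivalence. There is no genuine obstacle here --- the boundedness and self-adjointness of the $H_p$ and of the Hankel operators are already available --- and the only step demanding care is keeping the parity shifts in $\mathcal{U}_\pm$ and $V$ consistent across the two summands, so that the sign $(-1)^{m+1}$ (rather than $(-1)^m$) appears in \emph{both} off-diagonal blocks before the concluding rotation, after which the rotation redistributes the signs to $(-1)^{m+1}$ and $(-1)^m$ on the diagonal.
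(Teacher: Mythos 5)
Your proposal is correct and follows essentially the same route as the paper: vanishing of the diagonal compressions via $c_{2k}=0$, identification of both off-diagonal blocks with $\frac{(-1)^{m+1}}{\pi}H_{-\frac{1}{2}-m}$ after conjugating by $\operatorname{diag}(V,V)\operatorname{diag}(\mathcal{U}_{+}^{-1},\mathcal{U}_{-}^{-1})$, and the final rotation by $\frac{1}{\sqrt{2}}\begin{bmatrix} I & -I \\ I & I\end{bmatrix}$ turning the antidiagonal block operator into $B\oplus(-B)$. The sign bookkeeping, including the evaluation $\sin\big(\tfrac{\pi}{2}(2(k+m+n)+3)\big)=-(-1)^{k+m+n}$, matches the paper's displayed computations.
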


Now Theorem \ref{Definition der Operatoren K und Beweis einiger Eigenschaften}, Lemma \ref{Hauptsatz Beweis Lemma I}, and Lemma \ref{Hauptsatz Beweis Lemma II} allow us to complete the proof of Theorem \ref{Diagonalisierung}. 

\begin{proof}[Proof of Theorem \ref{Diagonalisierung}]
	Let $ h : (0,\infty) \rightarrow (0,\infty) $ be defined as in Theorem \ref{Diagonalisierung}.
	Due to Rosenblum \cite{Rosenblum}, Theorem 4, the operator $ H_{p} $ is unitarily equivalent to the multiplication operator $ M_{h} $ on 
	$ L^{2} \Big(\, (0,\infty); \rho_{p}(\lambda) \mathrm{d}\lambda \Big)\, $ for all $ p \leq 1/2 $, in particular if $ p \in \big\{ \frac{1}{2}-m : m 		\in \mathbb{N}_{0} \big\}\, $. Therefore, the operators $ \frac{1}{\pi} H_{p} $ and $ -\frac{1}{\pi} H_{p} $ are unitarily equivalent to the 						multiplication operators $ M_{\tilde{h}} $ and $ M_{-\tilde{h}} $ on $ L^{2} \Big(\, (0,\infty); \rho_{p}(\lambda) \mathrm{d}\lambda \Big)\, $, 				respectively, where $ \tilde{h} := h / \pi $ and $ p \in \big\{ \frac{1}{2}-m : m \in \mathbb{N}_{0} \big\}\, $. From this, Theorem \ref{Definition 		der Operatoren K und Beweis einiger Eigenschaften}, and from the Lemmas \ref{Hauptsatz Beweis Lemma I} and \ref{Hauptsatz Beweis Lemma II}, all the 		assertions in Theorem \ref{Diagonalisierung} follow.
\end{proof}

\section{Some properties of the functions $ k^{(\ell)} $ for $ \ell \in \mathbb{N} $}

	If $ \ell = 0 $ then the function $ k^{(0)} : \mathbb{R} \rightarrow \mathbb{R} $ is given by $ k^{(0)}(x) = \frac{2}{\pi}~\frac{\sin(x)}{x} $; this 		function is fully understood. In this section, we will prove some properties of $ k^{(\ell)} $ for $ \ell \in \mathbb{N} $. First we need to prove 			some lemmas.

\begin{lemma}
	Let $ \ell \in \mathbb{N} $. Then one has
	\begin{enumerate}
		\item $ \sum\limits_{j=0}^{\ell-1} \frac{\cos \big(\, (\ell-j) \pi / 4 \big)\, }{2^{(\ell+j-2) / 2}} ~ \binom{\ell+j-1}{\ell-1} = 1 $.
		\item $ \sum\limits_{n=0}^{\ell} (-1)^{n} ~ \binom{2 \ell}{2n}
					= \cos(\ell \pi / 2) \cdot 2^{\ell} $.
		\item $ \sum\limits_{n=0}^{\ell-1} (-1)^{n+1} ~ \binom{2 \ell}{2n+1}
					= \sin(-\ell \pi / 2) \cdot 2^{\ell} $.
	\end{enumerate} \label{NuetzlicheSummen}
\end{lemma}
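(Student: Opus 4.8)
The plan is to prove all three identities by recognizing each sum as (the real or imaginary part of) an evaluation of a known generating function or binomial identity at a suitable root of unity, and then to simplify the resulting complex number using elementary trigonometry.

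For (2) and (3) I would start from the binomial expansion $(1+\mathrm{i})^{2\ell} = \sum_{k=0}^{2\ell} \binom{2\ell}{k} \mathrm{i}^{k}$. Splitting the sum into even and odd indices $k = 2n$ and $k = 2n+1$ gives $\sum_{n=0}^{\ell} \binom{2\ell}{2n} (-1)^{n} + \mathrm{i}\sum_{n=0}^{\ell-1} \binom{2\ell}{2n+1}(-1)^{n}$, so the left-hand sides of (2) and (3) are exactly $\operatorname{Re}(1+\mathrm{i})^{2\ell}$ and $-\operatorname{Im}(1+\mathrm{i})^{2\ell}$. Writing $1+\mathrm{i} = \sqrt{2}\,\mathrm{e}^{\mathrm{i}\pi/4}$, we get $(1+\mathrm{i})^{2\ell} = 2^{\ell}\,\mathrm{e}^{\mathrm{i}\ell\pi/2} = 2^{\ell}\big(\cos(\ell\pi/2) + \mathrm{i}\sin(\ell\pi/2)\big)$, which yields (2) and (3) immediately once one notes $-\sin(\ell\pi/2) = \sin(-\ell\pi/2)$.

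For (1) the natural starting point is the closed form for $\mathrsfs{F}\{\xi^{\ell}\}$ from Theorem \ref{Koeffizienten}: since $(\mathrsfs{F}\{\xi^{\ell}\})(w) = \frac{1}{\sqrt{2\pi}}\frac{\pi}{2^{\ell-1}}\,\mathrm{e}^{-|w|} p_{\ell}(|w|)$ and $\xi^{\ell} \in L^{1}(\mathbb{R})$, evaluating the Fourier inversion (or rather the forward transform relation) at a point where $\xi^{\ell}$ takes a convenient value pins down a linear combination of the coefficients of $p_{\ell}$. More precisely, $\xi(t) = 1/(1+t^{2})$, so $\xi^{\ell}(1) = 2^{-\ell}$; on the other hand $\xi^{\ell} = \frac{1}{\sqrt{2\pi}}\mathrsfs{F}^{-1}\big\{\frac{1}{\sqrt{2\pi}}\frac{\pi}{2^{\ell-1}}\mathrm{e}^{-|w|}p_{\ell}(|w|)\big\}$ evaluated at $t = 1$ is $\frac{1}{2\pi}\frac{\pi}{2^{\ell-1}}\int_{-\infty}^{\infty}\mathrm{e}^{-|w|}p_{\ell}(|w|)\mathrm{e}^{\mathrm{i}w}\mathrm{d}w = \frac{1}{2^{\ell}}\int_{0}^{\infty}\mathrm{e}^{-w}p_{\ell}(w)\cdot 2\cos(w)\,\mathrm{d}w\cdot\frac{1}{2}$. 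Using $\int_{0}^{\infty}\mathrm{e}^{-w}w^{r}\cos(w)\,\mathrm{d}w = r!\,\operatorname{Re}\big((1-\mathrm{i})^{-r-1}\big) = r!\,\cos\big((r+1)\pi/4\big)/2^{(r+1)/2}$ and plugging in $p_{\ell}(w) = \sum_{j=0}^{\ell-1} 2^{-j}\binom{\ell+j-1}{\ell-1}\frac{1}{(\ell-j-1)!}w^{\ell-j-1}$ turns the identity $\xi^{\ell}(1) = 2^{-\ell}$ into precisely the claimed sum (1), after the factorials cancel and the exponent bookkeeping is checked.

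The main obstacle is entirely computational rather than conceptual: in part (1) one must track the powers of $2$ and the argument of the cosine carefully through the integral evaluation $\int_{0}^{\infty}\mathrm{e}^{-w}w^{r}\cos(w)\,\mathrm{d}w$, matching the exponent $(r+1)/2$ with $(\ell+j-2)/2$ when $r = \ell - j - 1$, and making sure the $1/(\ell-j-1)!$ in $p_{\ell}$ exactly cancels the $r!$ produced by the Gamma integral. Parts (2) and (3) are routine once the root-of-unity trick is in place. An alternative, self-contained route to (1) avoiding Fourier analysis would be to establish it directly by induction on $\ell$ using the recursion $\binom{\ell+j-1}{\ell-1} = \binom{\ell+j-2}{\ell-2} + \binom{\ell+j-2}{\ell-1}$ together with a half-angle identity for $\cos\big((\ell-j)\pi/4\big)$; I would mention this as a remark but carry out the Fourier-based argument in the main text since the needed transform is already available from Theorem \ref{Koeffizienten}.
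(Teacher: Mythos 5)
Your parts (2) and (3) are exactly the paper's proof: split $(1+\mathrm{i})^{2\ell}=\sum_k\binom{2\ell}{k}\mathrm{i}^k$ into even and odd $k$ and read off $\operatorname{Re}$ and $-\operatorname{Im}$ of $2^{\ell}\mathrm{e}^{\mathrm{i}\ell\pi/2}$. For part (1), however, you take a genuinely different route. The paper works entirely within the sum: it sets $a_{\ell}=\sum_{j=0}^{\ell-1}2^{-(\ell+j)/2}\mathrm{e}^{\mathrm{i}(\ell-j)\pi/4}\binom{\ell+j-1}{\ell-1}$, uses Pascal's rule to derive the recursion $a_{\ell+1}=a_{\ell}+\mathrm{i}\,2^{-\ell-1}\binom{2\ell}{\ell}$, and concludes $\operatorname{Re}(a_{\ell})=\operatorname{Re}(a_{1})=1/2$ — essentially the ``alternative self-contained route'' you relegate to a remark. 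Your main argument instead reads identity (1) off from Theorem \ref{Koeffizienten}: evaluating the inversion formula for $\mathrsfs{F}\{\xi^{\ell}\}$ at $t=1$, where $\xi^{\ell}(1)=2^{-\ell}$, and computing $\int_{0}^{\infty}\mathrm{e}^{-w}w^{r}\cos(w)\,\mathrm{d}w=r!\,2^{-(r+1)/2}\cos\bigl((r+1)\pi/4\bigr)$ term by term in $p_{\ell}$. This is legitimate (Theorem \ref{Koeffizienten} is proved earlier and independently of this lemma, so there is no circularity) and it has the merit of explaining what identity (1) ``is'': it encodes the single point evaluation $\xi^{\ell}(1)=2^{-\ell}$. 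The cost is that it imports the full strength of Theorem \ref{Koeffizienten}, whereas the paper's recursion is elementary and self-contained.

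One computational caveat: in your chain of equalities the step
\begin{equation*}
\frac{1}{2\pi}\,\frac{\pi}{2^{\ell-1}}\int_{-\infty}^{\infty}\mathrm{e}^{-|w|}p_{\ell}(|w|)\,\mathrm{e}^{\mathrm{i}w}\,\mathrm{d}w
=\frac{1}{2^{\ell}}\int_{0}^{\infty}\mathrm{e}^{-w}p_{\ell}(w)\cdot 2\cos(w)\,\mathrm{d}w\cdot\frac{1}{2}
\end{equation*}
carries a spurious factor $\tfrac12$: folding the even integrand onto $(0,\infty)$ already produces the factor $2$, so the right-hand side should be $2^{-\ell}\cdot 2\int_{0}^{\infty}\mathrm{e}^{-w}p_{\ell}(w)\cos(w)\,\mathrm{d}w$, giving $\int_{0}^{\infty}\mathrm{e}^{-w}p_{\ell}(w)\cos(w)\,\mathrm{d}w=\tfrac12$. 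With that correction the exponent bookkeeping ($j+(\ell-j)/2=(\ell+j)/2$, then division by the final $2$ turns $2^{(\ell+j)/2}$ into $2^{(\ell+j-2)/2}$) closes the argument; as written, the extra $\tfrac12$ would leave you off from the stated identity by a factor of $2$.
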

\newpage
\begin{proof}
	\begin{enumerate}
		\item Let $ \ell \in \mathbb{N} $. Define $ a_{\ell} := \sum\limits_{j=0}^{\ell-1} \frac{\mathrm{e}^{\mathrm{i} \cdot (\ell-j) \pi / 													4}}{2^{(\ell+j) / 2}} ~ \binom{\ell+j-1}{\ell-1} $. It is easy to compute
					\begin{align*}
						a_{\ell+1}
						&= \sum_{j=0}^{\ell} \frac{\mathrm{e}^{\mathrm{i} \cdot (\ell+1-j) \pi / 4}}{2^{(\ell+1+j) / 2}} ~ \binom{\ell+j-1}{\ell-1}
							+ \sum_{j=1}^{\ell} \frac{\mathrm{e}^{\mathrm{i} \cdot (\ell+1-j) \pi / 4}}{2^{(\ell+1+j) / 2}} ~ \binom{\ell+j-1}{\ell} \\
						&= a_{\ell} \cdot \frac{1 + \mathrm{i}}{2} + \frac{1 + \mathrm{i}}{2^{\ell+1}} ~ \binom{2 \ell - 1}{\ell-1}
							+ a_{\ell+1} \cdot \frac{1 - \mathrm{i}}{2} - \frac{1}{2^{\ell+1}} ~ \binom{2\ell}{\ell}
					\end{align*}
					which is equivalent to
					\begin{align*}
						a_{\ell+1}
						= a_{\ell} + \frac{\mathrm{i}}{2^{\ell+1}} ~ \binom{2\ell}{\ell}.
					\end{align*}
					From this it follows that
					\begin{align*}
						\frac{1}{2} = \operatorname{Re} \left( a_{1} \right) = ... = \operatorname{Re} \left( a_{\ell} \right)
						= \sum\limits_{j=0}^{\ell-1} \frac{\cos \big(\, (\ell-j) \pi / 4 \big)\, }{2^{(\ell+j) / 2}} ~ \binom{\ell+j-1}{\ell-1}.
					\end{align*}
		\item If $ \ell \in \mathbb{N} $ then
					\begin{equation*}
						\sum\limits_{n=0}^{\ell} (-1)^{n} ~ \binom{2 \ell}{2n}
						= \operatorname{Re} \left\{ (1 + \mathrm{i})^{2 \ell} \right\}
						= \cos(\ell \pi / 2) \cdot 2^{\ell}.
					\end{equation*} 
		\item If $ \ell \in \mathbb{N} $ then
					\begin{equation*}
						\sum\limits_{n=0}^{\ell-1} (-1)^{n+1} ~ \binom{2 \ell}{2n+1}
						= - \operatorname{Im} \left\{ (1 + \mathrm{i})^{2 \ell} \right\}
						= \sin(-\ell \pi / 2) \cdot 2^{\ell}.
					\end{equation*} 
	\end{enumerate}
\end{proof}

\begin{lemma}
	Let $ m \in \mathbb{N}_{0} $. Then one has
	\begin{enumerate}
		\item $ \int\limits_{-\infty}^{\infty} \mathrm{e}^{-|y|} ~ |y|^{m} ~ \sin(x-y) \mathrm{d}y
						= \frac{m!}{2^{(m-1) / 2}} ~ \cos \Big(\, (m+1) \pi / 4 \Big)\, \cdot \sin(x) $ \newline for all $ x \in \mathbb{R} $.
		\item $ \int\limits_{-\infty}^{\infty} \mathrm{e}^{-|y|} ~ |y|^{m} ~ \cos(x-y) \mathrm{d}y
						= \frac{m!}{2^{(m-1) / 2}} ~ \cos \Big(\, (m+1) \pi / 4 \Big)\, \cdot \cos(x) $ \newline for all $ x \in \mathbb{R} $.
	\end{enumerate} \label{FormelnZweierIntegrale}
\end{lemma}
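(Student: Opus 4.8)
The plan is to reduce both identities to one elementary integral by means of the angle-addition formulas. Write $\sin(x-y)=\sin(x)\cos(y)-\cos(x)\sin(y)$ and $\cos(x-y)=\cos(x)\cos(y)+\sin(x)\sin(y)$, and pull the factors depending only on $x$ outside the integrals over $y$. Since the weight $y\mapsto \mathrm{e}^{-|y|}\,|y|^{m}$ is even and $y\mapsto\sin(y)$ is odd, the integral $\int_{-\infty}^{\infty}\mathrm{e}^{-|y|}\,|y|^{m}\sin(y)\,\mathrm{d}y$ vanishes. Hence both right-hand sides collapse to $\sin(x)$ (respectively $\cos(x)$) multiplied by the single constant
\begin{equation*}
	C_{m}:=\int_{-\infty}^{\infty}\mathrm{e}^{-|y|}\,|y|^{m}\cos(y)\,\mathrm{d}y
	=2\int_{0}^{\infty}\mathrm{e}^{-y}\,y^{m}\cos(y)\,\mathrm{d}y .
\end{equation*}

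Next I would evaluate $C_{m}$ by writing $\cos(y)=\operatorname{Re}\mathrm{e}^{\mathrm{i}y}$, so that $C_{m}=2\operatorname{Re}\int_{0}^{\infty}y^{m}\mathrm{e}^{-(1-\mathrm{i})y}\,\mathrm{d}y$. Because $\operatorname{Re}(1-\mathrm{i})=1>0$, this is a standard Gamma-type integral with value $m!/(1-\mathrm{i})^{m+1}$ (justified either by $m$-fold integration by parts or by analytic continuation in the exponent from the real case). Using the polar form $1-\mathrm{i}=\sqrt{2}\,\mathrm{e}^{-\mathrm{i}\pi/4}$ gives $(1-\mathrm{i})^{-(m+1)}=2^{-(m+1)/2}\,\mathrm{e}^{\mathrm{i}(m+1)\pi/4}$, whence
\begin{equation*}
	C_{m}=2\cdot m!\cdot 2^{-(m+1)/2}\cos\!\big((m+1)\pi/4\big)
	=\frac{m!}{2^{(m-1)/2}}\,\cos\!\big((m+1)\pi/4\big).
\end{equation*}
Combining the two steps yields assertions (1) and (2) simultaneously.

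I do not expect a genuine obstacle here: the only point needing a line of justification is the convergence of $\int_{0}^{\infty}y^{m}\mathrm{e}^{-(1-\mathrm{i})y}\,\mathrm{d}y$ and the identification of its value with $m!/(1-\mathrm{i})^{m+1}$, both of which are routine. As an alternative one could prove the closed form for $C_{m}$ directly by induction on $m$ using integration by parts, but the complex-exponential computation above is shorter and makes the appearance of $\cos\big((m+1)\pi/4\big)$ and the power of $2$ transparent.
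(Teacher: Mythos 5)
Your proposal is correct and follows essentially the same route as the paper: both reduce the integrals via the angle-addition formulas and parity to the single constant $2\int_{0}^{\infty}\mathrm{e}^{-y}y^{m}\cos(y)\,\mathrm{d}y$ and then evaluate it, the paper leaving this evaluation as ``straightforward'' while you make it explicit through the Gamma-type integral $\int_{0}^{\infty}y^{m}\mathrm{e}^{-(1-\mathrm{i})y}\,\mathrm{d}y = m!/(1-\mathrm{i})^{m+1}$. The only cosmetic difference is that the paper deduces part (2) from part (1) by the shift $\tilde{x}=x+\pi/2$, whereas you treat both parts simultaneously via the two addition formulas.
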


\begin{proof}
	Let $ m \in \mathbb{N}_{0} $.
	\begin{enumerate}
		\item Let $ x \in \mathbb{R} $. It is straightforward to compute
					\begin{align*}
						\int_{-\infty}^{\infty} \mathrm{e}^{-|y|} ~ |y|^{m} ~ \sin(x-y) \mathrm{d}y
						&= 2 \sin(x) \int_{0}^{\infty} \mathrm{e}^{-y} y^{m} \cos(y) \mathrm{d}y \\
						&= \sin(x) ~ \left\{ \frac{m!}{2^{(m-1) / 2}} ~ \cos \Big(\, (m+1) \pi / 4 \Big)\, \right\}.																				
					\end{align*}
		\item Put $ \tilde{x} = x + \pi / 2 $, $ x \in \mathbb{R} $. Since $ \sin(\tilde{x}-y) = \cos(x-y) $ for every $ y \in \mathbb{R} $ the claim 						follows from the first part of the lemma.
	\end{enumerate}
\end{proof}

\begin{lemma}
	Let $ \ell \in \mathbb{N} $. Then
	\begin{align*}
		&\frac{1}{\pi} ~ \frac{1}{2^{\ell-1}}  \sum_{n=0}^{2 \ell} (-1)^{n} \binom{2\ell}{n} ~ \bigg(\, \big(\, \mathrm{e}^{- \left| w 														\right|}~p_{\ell}(|w|) \big)\, \ast \Big(\, \frac{\mathrm{d}^{n}}{\mathrm{d}w^{n}} ~ \mathrm{sin}(w) \Big)\, \bigg)\,(x) \\
		&= \frac{2}{\pi} \cdot \sin( x - \ell \pi / 2)
	\end{align*} \label{BestimmungAsymptotik}
	holds true for each $ x \in \mathbb{R} $.
\end{lemma}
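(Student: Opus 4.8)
The plan is to peel the left-hand side apart in three stages, reducing it to an elementary trigonometric sum that is handled by Lemma~\ref{NuetzlicheSummen}. First I would use that $\frac{\mathrm{d}^{n}}{\mathrm{d}w^{n}}\sin(w)=\sin(w+n\pi/2)$ for every $n\in\mathbb{N}_{0}$, so that by the definition of convolution
\[
	\Big(\big(\mathrm{e}^{-|w|}p_{\ell}(|w|)\big)\ast\tfrac{\mathrm{d}^{n}}{\mathrm{d}w^{n}}\sin(w)\Big)(x)
	=\int_{-\infty}^{\infty}\mathrm{e}^{-|y|}\,p_{\ell}(|y|)\,\sin\big((x+n\pi/2)-y\big)\,\mathrm{d}y .
\]
Expanding $p_{\ell}(|y|)=\sum_{j=0}^{\ell-1}\frac{1}{2^{j}}\binom{\ell+j-1}{\ell-1}\frac{1}{(\ell-j-1)!}|y|^{\ell-j-1}$ and applying Lemma~\ref{FormelnZweierIntegrale}(1) with $m=\ell-j-1$ and $x$ replaced by $x+n\pi/2$, every summand becomes a constant multiple of $\sin(x+n\pi/2)$ and the factorial $(\ell-j-1)!$ cancels. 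Collecting the powers of two via $2^{-j}\cdot 2^{-(\ell-j-2)/2}=2^{-(\ell+j-2)/2}$, the resulting coefficient is precisely $\sum_{j=0}^{\ell-1}\frac{\cos((\ell-j)\pi/4)}{2^{(\ell+j-2)/2}}\binom{\ell+j-1}{\ell-1}$, which equals $1$ by Lemma~\ref{NuetzlicheSummen}(1). Thus the convolution equals $\sin(x+n\pi/2)$.

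Substituting this back, the left-hand side collapses to $\frac{1}{\pi}\cdot\frac{1}{2^{\ell-1}}\sum_{n=0}^{2\ell}(-1)^{n}\binom{2\ell}{n}\sin(x+n\pi/2)$, so it remains to establish the identity
\[
	\sum_{n=0}^{2\ell}(-1)^{n}\binom{2\ell}{n}\sin(x+n\pi/2)=2^{\ell}\sin(x-\ell\pi/2).
\]
I would prove this by splitting the sum according to the parity of $n$: for $n=2k$ one has $(-1)^{n}\sin(x+n\pi/2)=(-1)^{k}\sin x$, and for $n=2k+1$ one has $(-1)^{n}\sin(x+n\pi/2)=(-1)^{k+1}\cos x$. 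The even part is then $\sin x\sum_{k=0}^{\ell}(-1)^{k}\binom{2\ell}{2k}=2^{\ell}\cos(\ell\pi/2)\sin x$ by Lemma~\ref{NuetzlicheSummen}(2), the odd part is $\cos x\sum_{k=0}^{\ell-1}(-1)^{k+1}\binom{2\ell}{2k+1}=2^{\ell}\sin(-\ell\pi/2)\cos x$ by Lemma~\ref{NuetzlicheSummen}(3), and the sine addition formula turns their sum into $2^{\ell}\sin(x-\ell\pi/2)$. (Equivalently, one can write $\sin(x+n\pi/2)=\operatorname{Im}(\mathrm{e}^{\mathrm{i}x}\mathrm{i}^{n})$ and sum the binomial series directly to $\operatorname{Im}(\mathrm{e}^{\mathrm{i}x}(1-\mathrm{i})^{2\ell})=\operatorname{Im}(2^{\ell}\mathrm{e}^{\mathrm{i}(x-\ell\pi/2)})$.) Combining the two stages yields $\frac{1}{\pi}\cdot\frac{1}{2^{\ell-1}}\cdot 2^{\ell}\sin(x-\ell\pi/2)=\frac{2}{\pi}\sin(x-\ell\pi/2)$, which is the assertion.

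The argument is routine once Lemmas~\ref{NuetzlicheSummen} and~\ref{FormelnZweierIntegrale} are available; the only places that call for a little care are the bookkeeping of the powers of two that identifies the convolution coefficient with the sum in Lemma~\ref{NuetzlicheSummen}(1), and the observation that shifting the argument of $\sin$ by $n\pi/2$ converts an $n$-th derivative of $\sin$ into a translate of $\sin$ --- it is exactly this fact that makes Lemma~\ref{FormelnZweierIntegrale} directly applicable inside the convolution.
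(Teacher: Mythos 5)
Your proof is correct and follows essentially the same route as the paper's: both reduce the convolution to the coefficient sum of Lemma \ref{NuetzlicheSummen}(1) via Lemma \ref{FormelnZweierIntegrale}, evaluate the parity-split binomial sums with Lemma \ref{NuetzlicheSummen}(2),(3), and finish with the sine addition formula. Your use of $\frac{\mathrm{d}^{n}}{\mathrm{d}w^{n}}\sin(w)=\sin(w+n\pi/2)$ to invoke only part (1) of Lemma \ref{FormelnZweierIntegrale} is a minor streamlining, not a different argument.
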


\begin{proof}
	Let $ \ell \in \mathbb{N} $. It follows from Lemma \ref{FormelnZweierIntegrale} and Lemma \ref{NuetzlicheSummen} that
	\begin{align*}
		&\frac{1}{\pi} ~ \frac{1}{2^{\ell-1}} ~ \sum_{n=0}^{2 \ell} (-1)^{n} \binom{2\ell}{n} ~ \bigg(\, \big(\, \mathrm{e}^{- \left| w 														\right|}~p_{\ell}(|w|) \big)\, \ast \Big(\, \frac{\mathrm{d}^{n}}{\mathrm{d}w^{n}} ~ \mathrm{sin}(w) \Big)\, \bigg)\,(x) \\
		&= \frac{\sin(x)}{\pi \cdot 2^{\ell-1}} ~ \left[ \sum_{j=0}^{\ell-1} \frac{\cos \Big(\, (\ell-j) \pi / 4 \Big)\,}{2^{(\ell+j-2) / 2}} ~ 
			\binom{\ell + j - 1}{\ell - 1} \right]
			\cdot \left[ \sum_{\tilde{n}=0}^{\ell} (-1)^{\tilde{n}} \binom{2 \ell}{2 \tilde{n}} \right] \\
		& ~ + \frac{\cos(x)}{\pi \cdot 2^{\ell-1}} ~ \left[ \sum_{j=0}^{\ell-1} \frac{\cos \Big(\, (\ell-j) \pi / 4 \Big)\,}{2^{(\ell+j-2) / 2}} ~ 
			\binom{\ell + j - 1}{\ell - 1} \right]	\cdot \left[ \sum_{\tilde{n}=0}^{\ell-1} (-1)^{\tilde{n}+1} \binom{2 \ell}{2 \tilde{n} + 1} \right] \\
		&= \frac{2}{\pi} \cdot \sin(x - \ell \pi / 2), \quad x \in \mathbb{R}.
	\end{align*}
\end{proof}

We denote by $ \mathrsfs{C}^{\infty}(\mathbb{R}) $ the set of all infinitely differentiable functions on $ \mathbb{R} $.

\begin{theorem} \label{Gesammelte Eigenschaften der Funktionen k}
	Let $ \ell \in \mathbb{N} $. Then one has:
	\begin{enumerate}
		\item $ k^{(\ell)} \in \mathrsfs{C}^{\infty}(\mathbb{R}) \cap L^{2}(\mathbb{R}) \cap L^{\infty}(\mathbb{R}) $. All derivatives of arbitrary order of 					 $ k^{(\ell)} $ are in $ \mathrsfs{C}^{\infty}(\mathbb{R}) \cap L^{\infty}(\mathbb{R}) $. In particular, the limit $ \lim\limits_{x \rightarrow 						0+} k^{(\ell)}(x) $ exists in $ \mathbb{R} $.
		\item $ k^{(\ell)}(x) = \mathcal{O}(x^{-1}) $ as $ |x| \rightarrow \infty $, i.\,e. $ \limsup\limits_{|x| \rightarrow \infty} |x \cdot 												k^{(\ell)}(x)| \leq C < \infty $ \linebreak for some constant $ C > 0 $.
		\item $ \lim\limits_{|x| \rightarrow \infty} \left| x \cdot k^{(\ell)}(x) - \frac{2}{\pi} \cdot \sin(x - \ell \pi / 2) \right| = 0 $.
	\end{enumerate} \label{EigenschaftenKerne}
\end{theorem}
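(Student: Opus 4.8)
The plan is to read off all three assertions from the convolution representation of $k^{(\ell)}$ in Corollary \ref{Integralkerne}. Throughout, write $g_{\ell}(w) := \mathrm{e}^{-|w|} p_{\ell}(|w|)$ and $s(w) := \sin(w)/w$ (with $s(0) := 1$); since $p_{\ell}$ is a polynomial, $g_{\ell} \in L^{1}(\mathbb{R})$, and by Proposition \ref{FourierFaktor} we have $s \in \mathrsfs{C}^{\infty}(\mathbb{R})$ with $s^{(n)} \in L^{\infty}(\mathbb{R})$ for every $n \in \mathbb{N}_{0}$.

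For part (1), I would first note that $g_{\ell} \ast s \in \mathrsfs{C}^{\infty}(\mathbb{R})$ with $(g_{\ell} \ast s)^{(n)} = g_{\ell} \ast s^{(n)}$ and $\| g_{\ell} \ast s^{(n)} \|_{\infty} \leq \| g_{\ell} \|_{1} \| s^{(n)} \|_{\infty}$: this is differentiation under the integral sign, legitimate because the difference quotient of $s^{(n)}$ is, by the mean value theorem, dominated by the $g_{\ell}$-integrable constant $\| s^{(n+1)} \|_{\infty}$, so dominated convergence applies; continuity of each $g_{\ell} \ast s^{(n)}$ follows in the same way. By Corollary \ref{Integralkerne}, $k^{(\ell)}$ is a fixed finite linear combination of the functions $(g_{\ell} \ast s)^{(n)}$, $0 \leq n \leq 2\ell$, so $k^{(\ell)} \in \mathrsfs{C}^{\infty}(\mathbb{R})$ and every derivative of $k^{(\ell)}$ is again a finite linear combination of the bounded functions $g_{\ell} \ast s^{(m)}$, hence lies in $\mathrsfs{C}^{\infty}(\mathbb{R}) \cap L^{\infty}(\mathbb{R})$. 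For the $L^{2}$-statement, recall from Corollary \ref{Integralkerne} that $k^{(\ell)} = \frac{1}{\sqrt{2\pi}} \mathrsfs{F} \psi_{\ell}$, and $\psi_{\ell} = \xi^{\ell} \tilde{\psi}_{\ell}$ is bounded with support in $[-1,1]$, hence in $L^{2}(\mathbb{R})$; Plancherel's theorem then gives $k^{(\ell)} \in L^{2}(\mathbb{R})$. Finally, $k^{(\ell)}$ is continuous on all of $\mathbb{R}$, so $\lim_{x \to 0+} k^{(\ell)}(x) = k^{(\ell)}(0)$ exists in $\mathbb{R}$.

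For part (3), I would combine Corollary \ref{Integralkerne}, the identity $(g_{\ell} \ast s)^{(n)} = g_{\ell} \ast s^{(n)}$ established in (1), and Lemma \ref{BestimmungAsymptotik} (which computes the analogous sum of convolutions of $g_{\ell}$ against the derivatives of $\sin$ to be $\frac{2}{\pi} \sin(x - \ell\pi/2)$) to write
\begin{equation*}
	x \, k^{(\ell)}(x) - \frac{2}{\pi} \sin(x - \ell\pi/2) = \frac{1}{\pi \, 2^{\ell-1}} \sum_{n=0}^{2\ell} (-1)^{n} \binom{2\ell}{n} \, D_{n}(x),
\end{equation*}
where $D_{n}(x) := \int_{\mathbb{R}} g_{\ell}(w) \, [ \, x \, s^{(n)}(x-w) - \sin^{(n)}(x-w) \, ] \, \mathrm{d}w$. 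It then suffices to show $D_{n}(x) \to 0$ as $|x| \to \infty$ for each $n$. I would split the integral at $|w| = |x|/2$ (assuming $|x| \geq 2$). On $\{ |w| > |x|/2 \}$ the integrand is bounded in absolute value by $g_{\ell}(w) ( |x| \| s^{(n)} \|_{\infty} + 1 )$, while $\int_{|w| > |x|/2} g_{\ell}(w) \, \mathrm{d}w$ decays exponentially in $|x|$ (a polynomial times $\mathrm{e}^{-|w|}$), so this contribution tends to $0$. On $\{ |w| \leq |x|/2 \}$ one has $|x-w| \geq |x|/2 \geq 1$, so Leibniz' rule gives $s^{(n)}(x-w) = \sum_{j=0}^{n} \binom{n}{j} (-1)^{j} j! \, (x-w)^{-j-1} \sin^{(n-j)}(x-w)$; isolating the $j=0$ term and using $\frac{x}{x-w} - 1 = \frac{w}{x-w}$, the bracket becomes $\frac{w}{x-w} \sin^{(n)}(x-w) + x \sum_{j=1}^{n} \binom{n}{j} (-1)^{j} j! \, (x-w)^{-j-1} \sin^{(n-j)}(x-w)$, which on $\{ |w| \leq |x|/2 \}$ has absolute value at most $\frac{2|w|}{|x|} + \frac{C_{n}}{|x|}$ for a constant $C_{n}$ depending only on $n$; integrating against $g_{\ell}$ and using that $g_{\ell}$ and $w \mapsto w \, g_{\ell}(w)$ both lie in $L^{1}(\mathbb{R})$ shows this contribution is $\mathcal{O}(|x|^{-1})$. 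Hence $D_{n}(x) \to 0$, which proves (3). Part (2) is then immediate: by (3), $|x \, k^{(\ell)}(x)| \leq \frac{2}{\pi} |\sin(x - \ell\pi/2)| + o(1) \leq \frac{2}{\pi} + o(1)$ as $|x| \to \infty$, so $\limsup_{|x| \to \infty} |x \, k^{(\ell)}(x)| \leq 2/\pi < \infty$.

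The main obstacle is part (3): Lemma \ref{BestimmungAsymptotik} already supplies the correct leading term $\frac{2}{\pi} \sin(x - \ell\pi/2)$, so all the work is in the error estimate $D_{n}(x) \to 0$, i.e. in showing that inside the convolution one may replace the derivatives of $s(w) = \sin(w)/w$ by the derivatives of $\sin(w)$ at the cost of only $\mathcal{O}(|x|^{-1})$ after multiplying by $x$. The bookkeeping -- the cutoff at $|w| = |x|/2$, chosen so that the Leibniz expansion of $s^{(n)}$ is valid precisely where $g_{\ell}$ carries its mass while the exponential decay of $g_{\ell}$ disposes of the tail, together with tracking the binomial factors -- is the technical heart of the proof. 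As an alternative that bypasses Lemma \ref{BestimmungAsymptotik}, one may integrate $k^{(\ell)}(x) = \frac{1}{2\pi} \int_{-1}^{1} \psi_{\ell}(t) \mathrm{e}^{-\mathrm{i}tx} \, \mathrm{d}t$ by parts: $\psi_{\ell}$ is real-analytic on $(-1,1)$ with bounded derivative there and has jumps at $t = \pm 1$ with one-sided values $\psi_{\ell}(1-) = 2\mathrm{i}^{\ell}$ and $\psi_{\ell}(-1+) = 2(-\mathrm{i})^{\ell}$, so the boundary term equals $\frac{1}{-\mathrm{i}\pi x}(\mathrm{i}^{\ell} \mathrm{e}^{-\mathrm{i}x} - (-\mathrm{i})^{\ell} \mathrm{e}^{\mathrm{i}x}) = \frac{2}{\pi x} \sin(x - \ell\pi/2)$ while the remaining integral is $o(x^{-1})$ by the Riemann--Lebesgue lemma; multiplying by $x$ gives (3), and (2) follows.
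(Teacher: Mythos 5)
Your main argument is correct and follows essentially the same route as the paper: part (1) by bounding the convolutions $g_{\ell}\ast s^{(n)}$ and using Plancherel for the $L^{2}$ claim, part (2) as a consequence of part (3), and part (3) by combining Corollary \ref{Integralkerne} with Lemma \ref{BestimmungAsymptotik} and showing that the error term vanishes at infinity. The only difference in the main line is cosmetic: where you estimate $D_{n}(x)$ quantitatively with a cutoff at $|w|=|x|/2$, the paper writes $x=(x-y)+y$, introduces the entire functions $F_{2n}(z)=z\,s^{(2n)}(z)-(-1)^{n}\sin(z)$ and $G_{2n+1}(z)=z\,s^{(2n+1)}(z)-(-1)^{n}\cos(z)$ (which are bounded and tend to zero on $\mathbb{R}$ by the same Leibniz expansion you use), and disposes of each piece by dominated convergence; both versions rest on the identical computation. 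Your closing alternative for part (3), however, is genuinely different and noticeably shorter: integrating $k^{(\ell)}(x)=\frac{1}{2\pi}\int_{-1}^{1}\psi_{\ell}(t)\mathrm{e}^{-\mathrm{i}tx}\,\mathrm{d}t$ by parts, reading off the leading term $\frac{2}{\pi x}\sin(x-\ell\pi/2)$ from the jumps $\psi_{\ell}(\pm 1\mp)=2(\pm\mathrm{i})^{\ell}$, and killing the remainder with Riemann--Lebesgue bypasses Lemma \ref{BestimmungAsymptotik} and all of Lemmas \ref{NuetzlicheSummen}--\ref{FormelnZweierIntegrale} entirely (it even re-derives the constant $\frac{2}{\pi}\sin(x-\ell\pi/2)$ rather than taking it as input); its only cost is that it works on the Fourier side and so does not produce the convolution identities the paper reuses in Section 4.
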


\begin{proof}
	Let $ \ell \in \mathbb{N} $.
	\begin{enumerate}
		\item We just show the boundedness. All the other assertions are obvious. Let $ m \in \mathbb{N}_{0} $. Define the following positive constants:
					\begin{align*}
						&C^{\prime \prime} := \max \left\{ \sup_{w \in \mathbb{R}} \left| \frac{\mathrm{d}^{j}}{\mathrm{d}w^{j}} ~ \frac{\sin(w)}{w} \right| ~ : j 																					\in \{ 0,1,...,2 \ell + m\} \right\}, \\
						&C^{\prime} := \max \left\{ \frac{1}{\pi} ~ \frac{\binom{\ell+j-1}{\ell-1}}{2^{j+\ell-1}} ~  \frac{\binom{2 \ell}{n}}{(\ell-j-1)!}  																			: j \in \{ 0,..., \ell-1 \}, n \in \{ 0,...,2 \ell \} \right\}, \\
						& C := C^{\prime} \cdot C^{\prime \prime}.
					\end{align*}
					\newpage One has
					\begin{align*}
						\left| \frac{\mathrm{d}^{m}}{\mathrm{d}x^{m}} k^{(\ell)}(x) \right|
						&\leq C \cdot \sum_{j=0}^{\ell-1} \sum_{n=0}^{2 \ell} ~ \int_{-\infty}^{\infty} \mathrm{e}^{- \left| y \right|}~\left| y 																	\right|^{\ell-j-1} \mathrm{d}y \\
						&= 2 (2 \ell + 1) C \cdot \sum_{j=0}^{\ell-1} (\ell - j - 1)!
					\end{align*}
					for all $ x \in \mathbb{R} $. Since this constant is finite and does not depend on $ x $ we have shown that each derivative of arbitrary 								order of $ k^{(\ell)} $ is in $ L^{\infty}(\mathbb{R}) $.
		\item This is immediate by the third part of the theorem.
		\item Let $ n \in \mathbb{N}_{0} $. The functions
					\begin{equation*}
						F_{2n} : z \mapsto z \cdot \frac{\mathrm{d}^{2n}}{\mathrm{d}z^{2n}} ~ \frac{\sin(z)}{z} - (-1)^{n} \sin(z)
					\end{equation*}
					and
					\begin{equation*}
						G_{2n+1} : z \mapsto z \cdot \frac{\mathrm{d}^{2n+1}}{\mathrm{d}z^{2n+1}} ~ \frac{\sin(z)}{z} - (-1)^{n} \cos(z)
					\end{equation*}
					are entire. Furthermore, Leibniz' rule implies that
		 			\begin{align*}
		 				x \cdot \frac{\mathrm{d}^{2n}}{\mathrm{d}x^{2n}} ~ \frac{\sin(x)}{x} - (-1)^{n} \sin(x)
		 				= \sum_{m=1}^{2n} \binom{2n}{m} (-1)^{m} ~ m! ~ x^{-m} ~ \frac{\mathrm{d}^{2n-m}}{\mathrm{d}x^{2n-m}} \sin(x)
		 			\end{align*}
		 			and
		 			\begin{align*}
		 				&x \cdot \frac{\mathrm{d}^{2n+1}}{\mathrm{d}x^{2n+1}} ~ \frac{\sin(x)}{x} - (-1)^{n} \cos(x) \\
		 				&= \sum_{m=1}^{2n+1} \binom{2n+1}{m} (-1)^{m} ~ m! ~ x^{-m} ~ \frac{\mathrm{d}^{2n+1-m}}{\mathrm{d}x^{2n+1-m}} \sin(x)
		 			\end{align*}
		 			tend to zero as $ |x| \rightarrow \infty $ where $ x $ is real. Therefore, $ F_{2n},G_{2n+1} \in \mathrsfs{C}^{\infty}(\mathbb{R}) \cap 									L^{\infty}(\mathbb{R}) $. It is easy to show that the dominated convergence theorem implies
		 			\begin{align*}
		 				\left\{ \begin{array}{l l}
							&\lim\limits_{|x| \rightarrow \infty} \int\limits_{-\infty}^{\infty} \mathrm{e}^{-|y|} p_{\ell}(|y|)  F_{2n}(x-y) \mathrm{d}y = 0, \\
							&\lim\limits_{|x| \rightarrow \infty} \int\limits_{-\infty}^{\infty} \mathrm{e}^{-|y|} p_{\ell}(|y|)  G_{2n+1}(x-y) \mathrm{d}y = 0,
						\end{array} \right. \tag{$ \ast $}
		 			\end{align*}
		 			for all $ \ell \in \mathbb{N} $.
		 			
		 			We already know that the function $ x \mapsto \frac{\mathrm{d}^{n}}{\mathrm{d}x^{n}} ~ \frac{\sin(x)}{x} $ is in $ 																			\mathrsfs{C}^{\infty}(\mathbb{R}) \cap L^{\infty}(\mathbb{R}) $ and that $  \frac{\mathrm{d}^{n}}{\mathrm{d}x^{n}} ~ \frac{\sin(x)}{x} $ 								$	\rightarrow 0 $ for each sequence of real numbers such that $ |x| \rightarrow \infty $. It is easy to show that the dominated convergence 						theorem implies
		 			\begin{equation*}
		 				\lim_{|x| \rightarrow \infty} \int_{-\infty}^{\infty} \mathrm{e}^{-|y|} p_{\ell}(|y|) \cdot y \cdot 																										\frac{\partial^{n}}{\partial x^{n}} ~ \frac{\sin(x-y)}{x-y} \mathrm{d}y
		 				= 0 \tag{$ \ast \ast $}
		 			\end{equation*}
		 			for all $ \ell \in \mathbb{N} $. Now Lemma \ref{BestimmungAsymptotik}, $ (\ast) $, and $ (\ast \ast) $ yield \newpage			
		 			\begin{align*}
		 				& \left| x \cdot k^{(\ell)}(x) - \frac{2}{\pi} \cdot \sin(x - \ell \pi / 2) \right| \\
		 				&= \left| \frac{1}{\pi} ~ \frac{1}{2^{\ell-1}} \sum\limits_{\tilde{n}=0}^{\ell} \binom{2 \ell}{2 \tilde{n}} \int_{-\infty}^{\infty} 								\mathrm{e}^{-|y|} ~ p_{\ell}(|y|) ~ F_{2 \tilde{n}}(x-y) \mathrm{d}y \right. \\
		 				& \quad - \frac{1}{\pi} ~ \frac{1}{2^{\ell-1}} \sum\limits_{\tilde{n}=0}^{\ell-1} \binom{2 \ell}{2 \tilde{n} + 1} 																					\int_{-\infty}^{\infty} \mathrm{e}^{-|y|} ~ p_{\ell}(|y|) ~ G_{2 \tilde{n} + 1}(x-y) \mathrm{d}y \\
		 				& \quad + \left. \frac{1}{\pi} ~ \frac{1}{2^{\ell-1}} \sum_{n=0}^{2 \ell} (-1)^{n} \binom{2 \ell}{n} \int_{-\infty}^{\infty} 												\mathrm{e}^{-|y|} ~ p_{\ell}(|y|) \cdot y \cdot \frac{\partial^{n}}{\partial x^{n}} ~ \frac{\sin(x-y)}{x-y} \mathrm{d}y \right| \\
		 				&\leq \frac{1}{\pi} ~ \frac{1}{2^{\ell-1}} \sum\limits_{\tilde{n}=0}^{\ell} \binom{2 \ell}{2 \tilde{n}} \left| ~																					\int_{-\infty}^{\infty} \mathrm{e}^{-|y|} ~ p_{\ell}(|y|) ~ F_{2 \tilde{n}}(x-y) \mathrm{d}y \right| \\
		 				& \quad + \frac{1}{\pi} ~ \frac{1}{2^{\ell-1}} \sum\limits_{\tilde{n}=0}^{\ell-1} \binom{2 \ell}{2 \tilde{n} + 1} \left| ~																		\int_{-\infty}^{\infty} \mathrm{e}^{-|y|} ~ p_{\ell}(|y|) ~ G_{2 \tilde{n} + 1}(x-y) \mathrm{d}y \right| \\
		 				& \quad + \frac{1}{\pi} ~ \frac{1}{2^{\ell-1}} \sum_{n=0}^{2 \ell} \binom{2 \ell}{n} \left| ~ \int_{-\infty}^{\infty} 															\mathrm{e}^{-|y|} ~ p_{\ell}(|y|) \cdot y \cdot \frac{\partial^{n}}{\partial x^{n}} ~ \frac{\sin(x-y)}{x-y} \mathrm{d}y \right| \\
		 				& \rightarrow 0			
		 			\end{align*}
		 			as $ |x| \rightarrow \infty $, for all $ \ell \in \mathbb{N} $.
	\end{enumerate}
\end{proof}

\begin{corollary} \label{Ergaenzung der gesammelten Eigenschaften der Funktionen k}
	Let $ \ell \in \mathbb{N} $. Then one has:
	\begin{enumerate}
		\item $ k^{(\ell)} \in L^{p}(\mathbb{R}) $ for all $ p \in (1,\infty] $.
		\item $ k^{(\ell)} \notin L^{1}(\mathbb{R}) $.
	\end{enumerate}
\end{corollary}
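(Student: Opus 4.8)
The plan is to read (1) off from Theorem \ref{Gesammelte Eigenschaften der Funktionen k} by a routine splitting of the integral, and to prove (2) by combining the identity $ k^{(\ell)} = \frac{1}{\sqrt{2\pi}}\,\mathrsfs{F}\psi_{\ell} $ (from Corollary \ref{Integralkerne} and the proof of Theorem \ref{Definition der Operatoren K und Beweis einiger Eigenschaften}) with the standard fact that the Fourier transform of an $ L^{1} $-function is continuous, using that $ \psi_{\ell} $ is visibly discontinuous.

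For (1): the case $ p = \infty $ is immediate from part (1) of Theorem \ref{Gesammelte Eigenschaften der Funktionen k}, which gives $ k^{(\ell)} \in L^{\infty}(\mathbb{R}) $. For $ p \in (1,\infty) $ I would split $ \mathbb{R} $ into $ \{ |x| \leq 1 \} $ and $ \{ |x| > 1 \} $. On $ \{ |x| \leq 1 \} $ the integrand $ |k^{(\ell)}|^{p} $ is bounded by $ \| k^{(\ell)} \|_{\infty}^{p} $ and the set has finite measure. On $ \{ |x| > 1 \} $ the map $ x \mapsto x\,k^{(\ell)}(x) $ is continuous (part (1)) and bounded (part (2)), say $ |x\,k^{(\ell)}(x)| \leq C $ there, whence $ \int_{|x|>1} |k^{(\ell)}(x)|^{p}\,\mathrm{d}x \leq C^{p}\int_{|x|>1}|x|^{-p}\,\mathrm{d}x < \infty $ since $ p > 1 $. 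Adding the two contributions gives $ k^{(\ell)} \in L^{p}(\mathbb{R}) $.

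For (2): recall that $ \psi_{\ell} \in L^{1}(\mathbb{R}) \cap L^{2}(\mathbb{R}) $ and $ k^{(\ell)} = \frac{1}{\sqrt{2\pi}}\,\mathrsfs{F}\psi_{\ell} $, so by Plancherel $ \mathrsfs{F}^{-1} k^{(\ell)} = \frac{1}{\sqrt{2\pi}}\,\psi_{\ell} $ almost everywhere. Assume, for contradiction, that $ k^{(\ell)} \in L^{1}(\mathbb{R}) $; then $ \mathrsfs{F}^{-1} k^{(\ell)} $ has a (uniformly) continuous representative on $ \mathbb{R} $. But $ \psi_{\ell}(t) = 2\left(\frac{1+\mathrm{i}t}{1-\mathrm{i}t}\right)^{\ell}\mathds{1}_{[-1,1]}(t) $, and since $ \frac{1+\mathrm{i}}{1-\mathrm{i}} = \mathrm{i} $ we have $ \psi_{\ell}(t) \to 2\,\mathrm{i}^{\ell} \neq 0 $ as $ t \to 1^{-} $ while $ \psi_{\ell}(t) = 0 $ for $ t > 1 $: $ \psi_{\ell} $ has a genuine jump at $ t = 1 $ and therefore cannot coincide almost everywhere with a continuous function. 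This contradiction yields $ k^{(\ell)} \notin L^{1}(\mathbb{R}) $.

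The only genuinely delicate point is (2), and the argument above sidesteps it by invoking Fourier analysis instead of estimating $ \int |k^{(\ell)}| $ directly. For orientation I would add the remark that (2) is also what part (3) of Theorem \ref{Gesammelte Eigenschaften der Funktionen k} predicts: $ k^{(\ell)}(x) $ has the same oscillatory tail $ \sim \frac{2}{\pi x}\sin(x - \ell\pi/2) $ at infinity as $ k^{(0)}(x) = \frac{2}{\pi}\frac{\sin x}{x} $, which famously fails to be integrable; making that into a self-contained proof would however require a Cesàro-type estimate to absorb the $ o(1/x) $ error coming from part (3), which is why the Fourier route is the cleaner one.
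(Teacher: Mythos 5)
Your proof is correct, and part (1) is the same splitting argument the paper uses (bounded part plus an $ |x|^{-p} $ tail controlled by $ \sup |x\,k^{(\ell)}(x)| $). For part (2) you take a genuinely different route. The paper estimates $ \int |k^{(\ell)}| $ from below directly: it uses part (3) of Theorem \ref{Gesammelte Eigenschaften der Funktionen k} on the set $ \Omega $ obtained by deleting $ \pi/4 $-neighbourhoods of the zeros of $ x \mapsto \sin(x - \ell\pi/2) $; on $ \Omega $ one has $ |\sin(x-\ell\pi/2)| \geq 1/\sqrt{2} $, so the main term dominates the $ o(1/x) $ error outright and no Ces\`{a}ro-type averaging is needed (your closing remark overestimates the difficulty of the direct route), and the integral over $ \Omega \cap (N,\infty) $ then diverges like the harmonic series. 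You instead combine the identity $ k^{(\ell)} = \frac{1}{\sqrt{2\pi}}\,\mathrsfs{F}\psi_{\ell} $ with $ \psi_{\ell} \in L^{1}(\mathbb{R}) \cap L^{2}(\mathbb{R}) $, the continuity of Fourier transforms of $ L^{1} $ functions, and the jump of $ \psi_{\ell} $ at $ t = \pm 1 $; since $ |\psi_{\ell}| = 2 \cdot \mathds{1}_{[-1,1]} $, that jump is present for every $ \ell $, so no case analysis is needed. Both arguments are sound: yours is shorter and isolates the conceptual reason for non-integrability (a symbol with a jump cannot have an integrable Fourier transform), while the paper's stays entirely within the real-variable asymptotics it has already established and exhibits the harmonic-series divergence of the oscillatory tail explicitly.
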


\begin{proof}
	Let $ \ell \in \mathbb{N} $.
	\begin{enumerate}
		\item We already know that $ k^{(\ell)} \in L^{\infty}(\mathbb{R}) $. If $ p \in (1,\infty) $ and $ N \in \mathbb{N} $, it is straightforward to 							compute
					\begin{align*}
						\int_{-\infty}^{\infty} \left| k^{(\ell)}(x) \right|^{p} \mathrm{d}x
						\leq C_{1} + \frac{2 C_{2}}{p-1} ~ N^{-p+1}
						< \infty
					\end{align*}
					where $ C_{1} := 2N \cdot \left\| k^{(\ell)} \right\|_{L^{\infty}(\mathbb{R})}^{p} < \infty $ and $ C_{2} := \sup\limits_{x \in \mathbb{R}} 						\left| x \cdot k^{(\ell)}(x) \right|^{p} < \infty $ by Theorem \ref{Gesammelte Eigenschaften der Funktionen k}.
		\item Denote by $ B_{\rho}(y) \subset \mathbb{R} $ the open ball of radius $ \rho > 0 $ around a real number $ y $. 
		
					Put $ \varepsilon := \frac{\pi}{4} > 0 $. Now it follows from the third part of Theorem \ref{EigenschaftenKerne} that
					\begin{equation*}
						\lim\limits_{n \rightarrow \infty} \frac{k^{(\ell)}(x_{n})}{\frac{2}{\pi} ~ \frac{\sin(x_{n}-\ell \pi / 2)}{x_{n}}} = 1 \quad
						\text{for each sequence } (x_{n})_{n \in \mathbb{N}} 
					\end{equation*}
					in
					$ \Omega := \mathbb{R} \setminus \bigcup\limits_{m \in \mathbb{Z}}  B_{\varepsilon} \big(\, (m + \ell / 2) \pi \big)\, $ such that
					$ \lim\limits_{n \rightarrow \infty} |x_{n}| = \infty $. Since the function $ x \mapsto \frac{k^{(\ell)}(x)}{\frac{2}{\pi} ~ \frac{\sin(x-\ell 					 \pi / 2)}{x}} $ is continuous on $ \Omega $ there exists some number $ N \in \mathbb{N} $ such that $ N \geq \ell \pi / 2 $ and
					\begin{equation*}
						\frac{k^{(\ell)}(x)}{\frac{2}{\pi} ~ \frac{\sin(x-\ell \pi / 2)}{x}} \geq 1 - \varepsilon > 0
					\end{equation*}
					for all $ x \in [N,\infty) \cap \Omega $. If we put $ \widetilde{\Omega} = \mathbb{R} \setminus \bigcup\limits_{m \in \mathbb{Z}} 											B_{\varepsilon}(m \pi) $, one has
					\begin{align*}
						\int_{-\infty}^{\infty} \left| k^{(\ell)}(x) \right| \mathrm{d}x
						&\geq (1-\varepsilon) ~ \frac{2}{\pi} ~ \int_{\Omega \cap (N,\infty)} \left| \frac{\sin(x-\ell \pi / 2)}{x} \right| \mathrm{d}x \\
						&= (1-\varepsilon) ~ \frac{2}{\pi} ~ \int_{\widetilde{\Omega} \cap (N+\ell \pi / 2,\infty)} \left| \frac{\sin(\tilde{x}-\ell 																																																	\pi)}{\tilde{x} - \ell \pi / 2} \right| \mathrm{d}\tilde{x} \\
						&\geq (1-\varepsilon) ~ \frac{2}{\pi} ~ \int_{\widetilde{\Omega} \cap (N+\ell \pi / 2,\infty)} \frac{|\sin(\tilde{x})|}{\tilde{x}} 																																															\mathrm{d}\tilde{x}
					\end{align*}
					where we substituted $ \tilde{x} := x + \ell \pi / 2 $. Since	$ N \geq \ell \pi / 2 $ and $ |\sin(x)| \geq \frac{1}{\sqrt{2}} $ for all $ x 						\in \widetilde{\Omega} $ this implies
					\begin{align*}
						\int_{-\infty}^{\infty} \left| k^{(\ell)}(x) \right| \mathrm{d}x
						&\geq (1-\varepsilon) ~ \frac{\sqrt{2}}{\pi} ~ \int_{\widetilde{\Omega} \cap (N \pi + \pi / 4,\infty)} \frac{1}{x} \mathrm{d}x \\
						&= (1-\varepsilon) ~ \frac{\sqrt{2}}{\pi} ~ \sum_{n=N}^{\infty} ~ \int_{n \pi + \pi / 4}^{n \pi + 3 \pi / 4} \frac{1}{x} \mathrm{d}x \\
						&\geq (1-\varepsilon) ~ \frac{\sqrt{2}}{2 \pi} ~ \sum_{n=N+1}^{\infty} \frac{1}{n} 
						= \infty.
					\end{align*}
	\end{enumerate}
\end{proof}

\section{Explicit computation of the functions $ k^{(\ell)} $ for $ \ell \in \mathbb{N} $}
  
  In this section, we will compute the functions $ k^{(\ell)} $ on $ (0,\infty) $. For this we need the \textit{exponential integral function} $ E_{1} $ 	 and the \textit{complementary exponential integral function} $ \mathrm{Ein} $. Recall that $ \mathrm{Ein} : \mathbb{C} \rightarrow \mathbb{C} $ is 			entire and can be represented as follows:
  \begin{align} \label{Darstellung der komplementaeren exponentiellen Integralfunktion}
  	\mathrm{Ein}(z)
  	= \int_{0}^{1} \frac{1 - \mathrm{e}^{-tz}}{t} \mathrm{d}t, \quad z \in \mathbb{C}.
  \end{align}
  It is well-known that the exponential integral function $ E_{1} $ is holomorphic on $ \mathbb{C}^{-} := \mathbb{C} \setminus (-\infty,0] $ and can be 	represented on $ \overline{\mathbb{H}_{r}} \setminus \{ 0 \} $ as follows:
 	\begin{equation} \label{Darstellung der exponentiellen Integralfunktion}
		E_{1}(z) =  \int_{1}^{\infty} \frac{\mathrm{e}^{-tz}}{t} \mathrm{d}t, \quad z \in \overline{\mathbb{H}_{r}} \setminus \{ 0 \}.
	\end{equation}
   
  Furthermore, if $ z $ is in $ \mathbb{C}^{-} $ then
  \begin{equation} \label{RelationEI}
		E_{1}(z) = \mathrm{Ein}(z) - \log(z) - \gamma
	\end{equation}
  where $ \log $ is the principal value of the complex logarithm and $ \gamma $ denotes Euler's constant. 
  
  First we will prove some lemmas. \newpage

	According to \cite{Erdelyi}, p.\,217, formula (14), we know the following fact:
	\begin{lemma} \label{ExplitziteBerechnung Zitat H.T.F.}
		Let $ a $ be in $ \mathbb{C} $, $ \operatorname{Re}(a) > 0 $, and $ n \in \mathbb{N}_{0} $. Then the following formula holds true for all $ x > 0 $:
		\begin{align*}
			\int_{0}^{\infty} y^{n} \mathrm{e}^{-ay} \frac{1}{x+y} \mathrm{d}y
			= (-1)^{n} x^{n} \mathrm{e}^{ax} E_{1}(ax) + \sum\limits_{r=1}^{n} (-1)^{n-r} (r-1)! ~ a^{-r} x^{n-r}.
		\end{align*}		
	\end{lemma}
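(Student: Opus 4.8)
The plan is to give a short self-contained proof by induction on $n$, the base case $n=0$ being a change of variables that reduces the integral to the representation (\ref{Darstellung der exponentiellen Integralfunktion}) of the exponential integral function $E_{1}$. (Of course the identity is already available from \cite{Erdelyi}, but a direct argument is quick.) Throughout I would first record that for $x>0$ and $\operatorname{Re}(a)>0$ every integral appearing below converges absolutely: the factor $1/(x+y)$ is bounded near $y=0$, and for large $y$ one has $|y^{n}\mathrm{e}^{-ay}/(x+y)|\le y^{n}\mathrm{e}^{-\operatorname{Re}(a)y}$, which is integrable.

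For $n=0$ I would substitute $y=x(t-1)$ with $t\in[1,\infty)$, so that $x+y=xt$ and $\mathrm{d}y=x\,\mathrm{d}t$; this turns $\int_{0}^{\infty}\mathrm{e}^{-ay}\,\mathrm{d}y/(x+y)$ into $\mathrm{e}^{ax}\int_{1}^{\infty}\mathrm{e}^{-axt}/t\,\mathrm{d}t=\mathrm{e}^{ax}E_{1}(ax)$, where we use that $\operatorname{Re}(ax)>0$ so that the integral representation (\ref{Darstellung der exponentiellen Integralfunktion}) applies to the argument $ax\in\overline{\mathbb{H}_{r}}\setminus\{0\}$. Since the sum in the claimed formula is empty for $n=0$, this is exactly the assertion in that case.

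For the inductive step I would write $y^{n}=y^{n-1}(x+y)-x\,y^{n-1}$ and split the integral, obtaining
\[
\int_{0}^{\infty} y^{n}\mathrm{e}^{-ay}\frac{\mathrm{d}y}{x+y}
=\int_{0}^{\infty} y^{n-1}\mathrm{e}^{-ay}\,\mathrm{d}y-x\int_{0}^{\infty} y^{n-1}\mathrm{e}^{-ay}\frac{\mathrm{d}y}{x+y}.
\]
The first integral on the right equals $\Gamma(n)\,a^{-n}=(n-1)!\,a^{-n}$ (again valid because $\operatorname{Re}(a)>0$), which is precisely the $r=n$ term of the claimed sum; the second integral is handled by the induction hypothesis, and multiplying it by $-x$ shifts each power $x^{n-1-r}$ to $x^{n-r}$ and turns the sign $(-1)^{n-1-r}$ into $(-1)^{n-r}$, thereby reproducing the terms $r=1,\dots,n-1$ together with the leading term $(-1)^{n}x^{n}\mathrm{e}^{ax}E_{1}(ax)$. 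Collecting everything yields the formula for $n$, completing the induction.

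I do not expect a serious obstacle; the only points that deserve a word of care are the absolute convergence of the integrals and the verification that the argument $ax$ lies in the region $\overline{\mathbb{H}_{r}}\setminus\{0\}$ on which (\ref{Darstellung der exponentiellen Integralfunktion}) holds, and both are immediate from $x>0$ and $\operatorname{Re}(a)>0$. An alternative route would be to differentiate under the integral sign, noting that $\int_{0}^{\infty} y^{n}\mathrm{e}^{-ay}\,\mathrm{d}y/(x+y)=(-1)^{n}\frac{\mathrm{d}^{n}}{\mathrm{d}a^{n}}\bigl(\mathrm{e}^{ax}E_{1}(ax)\bigr)$ and expanding the $n$-th derivative by Leibniz' rule with $\frac{\mathrm{d}}{\mathrm{d}a}E_{1}(ax)=-\mathrm{e}^{-ax}/a$; but the induction above is cleaner and avoids bookkeeping of the derivatives.
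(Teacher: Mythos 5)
Your proof is correct. Note, though, that the paper does not prove this lemma at all: it simply quotes the formula from Erd\'elyi et al., \emph{Tables of integral transforms}, Vol.~2, p.~217, formula (14). So your argument is not a variant of the paper's proof but a genuine replacement for a citation. Both steps check out: the base case $n=0$ follows from the substitution $y=x(t-1)$, which turns the integral into $\mathrm{e}^{ax}\int_{1}^{\infty}\mathrm{e}^{-axt}t^{-1}\,\mathrm{d}t$, and the hypotheses $x>0$, $\operatorname{Re}(a)>0$ do place $ax$ in the region where the representation (\ref{Darstellung der exponentiellen Integralfunktion}) is valid; the inductive step via $y^{n}=y^{n-1}(x+y)-xy^{n-1}$ produces exactly the $r=n$ term $(n-1)!\,a^{-n}$ from the Gamma integral, and multiplication of the induction hypothesis by $-x$ shifts the remaining terms into place with the correct signs. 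The trade-off is the usual one: the citation is shorter and ties the lemma to a standard table (the remark after Theorem \ref{Berechnung der Fourier-Transformierten von Xi hoch ell} shows the author is happy to lean on such tables elsewhere), whereas your three-line induction makes the paper self-contained and, incidentally, makes transparent why the sum terminates with the $(r-1)!\,a^{-r}$ coefficients. Either is acceptable; if you include yours, keep the one sentence on absolute convergence, which is the only analytic point that needs saying.
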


  We will also need
   
  \begin{lemma} \label{ExpliziteBerechnungI}
	The following formulas hold true:
	\begin{enumerate}
		\item For all $ x > 0 $,
					\begin{equation*}
						\int_{0}^{\infty} \mathrm{e}^{-y} ~ \frac{\sin(x-y)}{x-y} \mathrm{d}y
						= \pi \mathrm{e}^{-x} + \frac{\mathrm{i}}{2} \mathrm{e}^{-x} \cdot \left\{ E_{1}(-x - \mathrm{i}x) - E_{1}(-x + \mathrm{i}x) \right\}.
					\end{equation*}
		\item Let $ m \in \mathbb{N} $. If $ r \in \{ 1,...,m \} $ then 
					\begin{equation*}
						\sum\limits_{j=r}^{m} (-1)^{j} \binom{m}{j} \frac{(j-1)!}{(j-r)!} = (-1)^{r} (r-1)!.
					\end{equation*}
		\item If $ m \in \mathbb{N}_{0} $ then
					\begin{align*}
						&\int_{0}^{\infty} \mathrm{e}^{-y} y^{m} ~ \frac{\sin(x-y)}{x-y} \mathrm{d}y \\
						&= \bigg(\, \pi \mathrm{e}^{-x} + \frac{\mathrm{i}}{2} \mathrm{e}^{-x} \cdot \left\{ E_{1}(-x - \mathrm{i}x) - E_{1}(-x + \mathrm{i}x) 											\right\} \bigg)\, \cdot x^{m} \\
						& \quad + \sum\limits_{r=1}^{m} \frac{(r-1)!}{2^{r/2}} x^{m-r} \sin \Big(\, \frac{r \pi}{4} - x \Big)\,, \quad x > 0.
					\end{align*}
	\end{enumerate}
\end{lemma}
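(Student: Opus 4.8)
The plan is to treat the three parts in turn, with (1) and (2) feeding into (3).

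\emph{Part (1).} I would reduce the real integral to a principal--value exponential integral by means of the identity
\[
  \frac{\sin(x-y)}{x-y}=\operatorname{Im}\Bigl(\mathrm{e}^{\mathrm{i}x}\,\frac{\mathrm{e}^{-\mathrm{i}y}}{x-y}\Bigr),
\]
so that $\int_{0}^{\infty}\mathrm{e}^{-y}\tfrac{\sin(x-y)}{x-y}\,\mathrm{d}y=\operatorname{Im}\bigl(-\mathrm{e}^{\mathrm{i}x}\operatorname{PV}\!\int_{0}^{\infty}\tfrac{\mathrm{e}^{-(1+\mathrm{i})y}}{y-x}\,\mathrm{d}y\bigr)$; pulling $\operatorname{Im}$ outside is legitimate since only the real part of the complex integrand fails to be absolutely integrable near $y=x$, and its principal value exists. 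The substitution $v=y-x$ turns the remaining integral into $\mathrm{e}^{-(1+\mathrm{i})x}\operatorname{PV}\!\int_{-x}^{\infty}\tfrac{\mathrm{e}^{-(1+\mathrm{i})v}}{v}\,\mathrm{d}v$. I would split this at $v=1$, use $\operatorname{PV}\!\int_{-x}^{1}\tfrac{\mathrm{d}v}{v}=-\log x$, recognise $\int_{-x}^{1}\tfrac{\mathrm{e}^{-(1+\mathrm{i})v}-1}{v}\,\mathrm{d}v=\mathrm{Ein}(-(1+\mathrm{i})x)-\mathrm{Ein}(1+\mathrm{i})$ from (\ref{Darstellung der komplementaeren exponentiellen Integralfunktion}) and $\int_{1}^{\infty}\tfrac{\mathrm{e}^{-(1+\mathrm{i})v}}{v}\,\mathrm{d}v=E_{1}(1+\mathrm{i})$ from (\ref{Darstellung der exponentiellen Integralfunktion}), and then collapse everything by (\ref{RelationEI}) to $E_{1}(-(1+\mathrm{i})x)+\log(-(1+\mathrm{i}))-\log(1+\mathrm{i})$. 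Since $1+\mathrm{i}$ has argument $\pi/4$ while $-(1+\mathrm{i})$ has principal argument $-3\pi/4$, the logarithm difference equals $-\mathrm{i}\pi$; this is the origin of the term $\pi\mathrm{e}^{-x}$. Finally, taking imaginary parts and using $\overline{E_{1}(z)}=E_{1}(\overline{z})$ (Schwarz reflection, valid on $\mathbb{C}^{-}$) rewrites $\operatorname{Im}E_{1}(-x-\mathrm{i}x)$ as $\tfrac{1}{2\mathrm{i}}(E_{1}(-x-\mathrm{i}x)-E_{1}(-x+\mathrm{i}x))$ and yields the asserted formula.

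\emph{Part (2).} I would read $\tfrac{(j-1)!}{(j-r)!}$ as $\bigl[\tfrac{\mathrm{d}^{r-1}}{\mathrm{d}x^{r-1}}x^{j-1}\bigr]_{x=1}$; this derivative vanishes for $1\le j<r$, so the sum equals $\bigl[\tfrac{\mathrm{d}^{r-1}}{\mathrm{d}x^{r-1}}\sum_{j=1}^{m}(-1)^{j}\binom{m}{j}x^{j-1}\bigr]_{x=1}=\bigl[\tfrac{\mathrm{d}^{r-1}}{\mathrm{d}x^{r-1}}\tfrac{(1-x)^{m}-1}{x}\bigr]_{x=1}$ by the binomial theorem. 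Because $(1-x)^{m}/x$ has a zero of order $m\ge r$ at $x=1$, its first $r-1$ derivatives vanish there, so only the summand $-1/x$ contributes, giving $(-1)^{r}(r-1)!$.

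\emph{Part (3).} I would imitate the proof of Lemma \ref{ExplitziteBerechnung Zitat H.T.F.}. Using the same identity as in (1), $\int_{0}^{\infty}\mathrm{e}^{-y}y^{m}\tfrac{\sin(x-y)}{x-y}\,\mathrm{d}y=\operatorname{Im}\bigl(\mathrm{e}^{\mathrm{i}x}\operatorname{PV}\!\int_{0}^{\infty}\mathrm{e}^{-(1+\mathrm{i})y}\tfrac{y^{m}}{x-y}\,\mathrm{d}y\bigr)$, and I would evaluate the complex integral by expanding $y^{m}=\bigl((y-x)+x\bigr)^{m}$. The singular term $x^{m}/(x-y)$ reproduces exactly the principal value from part (1), contributing $-x^{m}\mathrm{e}^{-(1+\mathrm{i})x}\bigl(E_{1}(-(1+\mathrm{i})x)-\mathrm{i}\pi\bigr)$; the remaining terms are elementary integrals $\int_{0}^{\infty}\mathrm{e}^{-(1+\mathrm{i})y}(y-x)^{j-1}\,\mathrm{d}y$, and after expanding $(y-x)^{j-1}$ and collecting powers of $x$, the coefficient of $x^{m-r}$ is a double binomial sum whose inner sum is exactly the one evaluated in part (2). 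Multiplying through by $\mathrm{e}^{\mathrm{i}x}$, using $(1+\mathrm{i})^{-r}=2^{-r/2}\mathrm{e}^{-\mathrm{i}r\pi/4}$, and taking imaginary parts (again via Schwarz reflection for the $E_{1}$ term) assembles the three pieces $\pi\mathrm{e}^{-x}x^{m}$, $\tfrac{\mathrm{i}}{2}\mathrm{e}^{-x}(E_{1}(-x-\mathrm{i}x)-E_{1}(-x+\mathrm{i}x))x^{m}$, and $\sum_{r=1}^{m}\tfrac{(r-1)!}{2^{r/2}}x^{m-r}\sin(\tfrac{r\pi}{4}-x)$.

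The main obstacle is the argument bookkeeping in part (1): the principal value near $v=0$, the branch of $\log$ in (\ref{RelationEI}), and the passage from $E_{1}$ to its imaginary part all have to be pinned down consistently, and a single sign slip changes $\pi\mathrm{e}^{-x}$ or interchanges the two $E_{1}$-arguments. Once (1) is in place, parts (2) and (3) are essentially routine.
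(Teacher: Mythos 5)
Your proposal is correct, and its overall architecture coincides with the paper's: reduce the integrals to exponential integrals via complex exponentials, do the $E_{1}$/$\mathrm{Ein}$/branch-of-logarithm bookkeeping (in both arguments the term $\pi\mathrm{e}^{-x}$ arises from exactly the same source, the $\mathrm{i}\pi$ discrepancy between $\log(-(1+\mathrm{i}))$ and $\log(1+\mathrm{i})$), and then obtain (3) from (1) by the binomial expansion $y^{m}=((y-x)+x)^{m}$ together with the combinatorial identity (2). Two local choices differ, and both are worth noting. In (1) you write $\tfrac{\sin(x-y)}{x-y}=\operatorname{Im}\bigl(\mathrm{e}^{\mathrm{i}(x-y)}/(x-y)\bigr)$ and work with a principal value, which forces you to justify interchanging $\operatorname{Im}$ with the PV limit and to invoke Schwarz reflection $\overline{E_{1}(z)}=E_{1}(\bar z)$ at the end; the paper instead keeps the difference $\mathrm{e}^{\mathrm{i}(x-y)}-\mathrm{e}^{-\mathrm{i}(x-y)}$ together, so the singularity at $y=x$ cancels, no principal value ever appears, and both $E_{1}(-x-\mathrm{i}x)$ and $E_{1}(-x+\mathrm{i}x)$ emerge directly (at the cost of the slightly less transparent substitution $\hat y=-1+y/x$). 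Your justification of the PV step is adequate, so this is a matter of taste. In (2) your argument --- reading $\tfrac{(j-1)!}{(j-r)!}$ as $\bigl[\tfrac{\mathrm{d}^{r-1}}{\mathrm{d}x^{r-1}}x^{j-1}\bigr]_{x=1}$ and differentiating $\tfrac{(1-x)^{m}-1}{x}$ --- is genuinely different from and arguably cleaner than the paper's, which only asserts that the equivalent identity $\sum_{j=r}^{m}(-1)^{j}\binom{m}{j}\binom{j-1}{r-1}=(-1)^{r}$ "is not hard to show by induction on $m$"; your derivative trick gives a closed, induction-free verification. Part (3) is carried out exactly as in the paper.
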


\begin{proof}
	\begin{enumerate}
		\item From (\ref{Darstellung der komplementaeren exponentiellen Integralfunktion}) and (\ref{RelationEI}) it follows that
					\begin{align*}
						&\int_{-1}^{1} \frac{\mathrm{e}^{-y(x-\mathrm{i}x)} - \mathrm{e}^{-y(x+\mathrm{i}x)}}{y} \mathrm{d}y \\
						&= - \mathrm{Ein}(x-\mathrm{i}x) + \mathrm{Ein}(-x+\mathrm{i}x) + \mathrm{Ein}(x+\mathrm{i}x) - \mathrm{Ein}(-x-\mathrm{i}x) \\
						&= - E_{1}(x-\mathrm{i}x) + E_{1}(-x+\mathrm{i}x) + E_{1}(x+\mathrm{i}x) - E_{1}(-x-\mathrm{i}x)
							 + 2 \pi \mathrm{i}, \quad x > 0.
					\end{align*}
					If we substitute $ \hat{y} := -1 + y / x $ and then apply (\ref{Darstellung der exponentiellen Integralfunktion}) we get that
					\begin{align*}
						&\int_{0}^{\infty} \mathrm{e}^{-y} ~ \frac{\sin(x-y)}{x-y} \mathrm{d}y
						= \frac{\mathrm{e}^{-x}}{2 \mathrm{i}} \int_{-1}^{\infty} \left( - \mathrm{e}^{-\hat{y}(x+\mathrm{i}x)} + 																				 	 \mathrm{e}^{-\hat{y}(x-\mathrm{i}x)} \right) \frac{1}{\hat{y}} ~ \mathrm{d}\hat{y} \\
						& \qquad = \frac{\mathrm{e}^{-x}}{2 \mathrm{i}} \left\{ E_{1}(x-\mathrm{i}x) - E_{1}(x+\mathrm{i}x) + \int_{-1}^{1} 																				\frac{\mathrm{e}^{-\hat{y}(x-\mathrm{i}x)} - \mathrm{e}^{-\hat{y}(x+\mathrm{i}x)}}{\hat{y}} \mathrm{d}\hat{y} \right\} \\
						& \qquad = \pi \mathrm{e}^{-x} + \frac{\mathrm{i}}{2} \mathrm{e}^{-x} \cdot \left\{ E_{1}(-x-\mathrm{i}x) - E_{1}(-x+\mathrm{i}x) \right\},
							\quad x > 0.
					\end{align*}
		\item It is not hard to show
					\begin{equation*}
						\sum\limits_{j=r}^{m} (-1)^{j} \binom{m}{j} \binom{j-1}{r-1} = (-1)^{r} \quad \text{for all } r \in \{ 1,...,m \} 
					\end{equation*}
					by induction on $ m \in \mathbb{N} $. \newpage
		\item If $ m = 0 $ then the assertion follows immediately from (1).
					
					Now let $ m $ be in $ \mathbb{N} $.
					Let $ a $ be in $ \mathbb{C} $ such that $ \operatorname{Re}(a) > 0 $. Then the substitution $ r := 1+k $, a comparison of coefficients, and 						the second part of this lemma imply
					\begin{align*}
						&x^{m} \sum\limits_{j=1}^{m} \binom{m}{j} (-1)^{j} \sum\limits_{k=0}^{j-1} \binom{j-1}{k} (-1)^{k} x^{-1-k} \int_{0}^{\infty} 												\mathrm{e}^{-ay} y^{k} \mathrm{d}y \\
						&= x^{m} \sum\limits_{j=1}^{m} \binom{m}{j} (-1)^{j} \sum\limits_{k=0}^{j-1} \binom{j-1}{k} (-1)^{k} \frac{k!}{a^{1+k}} x^{-1-k} \\
						&= x^{m} \sum\limits_{r=1}^{m} a^{-r} x^{-r} (-1)^{r+1} \sum\limits_{j=r}^{m} (-1)^{j} \binom{m}{j} \frac{(j-1)!}{(j-r)!} \\
						&= - \sum\limits_{r=1}^{m} a^{-r} (r-1)! ~ x^{m-r}, \quad x > 0. \tag{$ \ast $}
					\end{align*}
					From the binomial theorem, ($ \ast $), and the first part of this lemma it follows that
					\begin{align*}
						&\int_{0}^{\infty} \mathrm{e}^{-y} \{ (y-x) + x \} ^{m} ~ \frac{\sin(x-y)}{x-y} \mathrm{d}y \\
						&= x^{m} \int_{0}^{\infty} \mathrm{e}^{-y} ~ \frac{\sin (x-y)}{x-y} \mathrm{d}y \\
						& \quad	+ \frac{\mathrm{e}^{\mathrm{i} x}}{2 \mathrm{i}} x^{m} \sum\limits_{j=1}^{m} \binom{m}{j} (-1)^{j} \sum\limits_{k=0}^{j-1} 												\binom{j-1}{k} \frac{(-1)^{k}}{x^{k+1}} \int_{0}^{\infty} \mathrm{e}^{-(1 + \mathrm{i}) y} y^{k} \mathrm{d}y \\
						& \quad - \frac{\mathrm{e}^{- \mathrm{i}x}}{2 \mathrm{i}} x^{m} \sum\limits_{j=1}^{m} \binom{m}{j} (-1)^{j} \sum\limits_{k=0}^{j-1} 											\binom{j-1}{k} \frac{(-1)^{k}}{x^{k+1}} \int_{0}^{\infty} \mathrm{e}^{-(1 - \mathrm{i}) y} y^{k} \mathrm{d}y \\
						&= \bigg(\, \pi \mathrm{e}^{-x} + \frac{\mathrm{i}}{2} \mathrm{e}^{-x} \cdot \left\{ E_{1}(-x - \mathrm{i}x) - E_{1}(-x + \mathrm{i}x) 											\right\} \bigg)\, \cdot x^{m} \\
						& \quad - \frac{\mathrm{e}^{\mathrm{i} x}}{2 \mathrm{i}} \sum\limits_{r=1}^{m} (1 + \mathrm{i})^{-r} (r-1)! ~ x^{m-r}
							+ \frac{\mathrm{e}^{- \mathrm{i}x}}{2 \mathrm{i}} \sum\limits_{r=1}^{m} (1 - \mathrm{i})^{-r} (r-1)! ~ x^{m-r} \\
						&= \bigg(\, \pi \mathrm{e}^{-x} + \frac{\mathrm{i}}{2} \mathrm{e}^{-x} \cdot \left\{ E_{1}(-x - \mathrm{i}x) - E_{1}(-x + \mathrm{i}x) 											\right\} \bigg)\, \cdot x^{m} \\
						& \quad + \sum\limits_{r=1}^{m} \frac{(r-1)!}{2^{r/2}} x^{m-r} \sin \Big(\, \frac{r \pi}{4} - x \Big)\,, \quad x > 0,
					\end{align*}
					as was to be shown.	
	\end{enumerate}		
\end{proof}
\newpage
\begin{lemma} \label{ExpliziteBerechnungII}
	Let $ m,n \in \mathbb{N}_{0} $ such that $ m \geq n $. Then one has:
	\begin{enumerate}
		\item For all $ x > 0 $,
					\begin{align*}
						&P_{m,n}^{-}(x)
						:= \int_{0}^{\infty} \mathrm{e}^{-y} y^{m} \frac{\partial^{n}}{\partial x^{n}} ~ \frac{\sin(x-y)}{x-y} \mathrm{d}y \\
						&= \left( \pi \mathrm{e}^{-x} + \frac{\mathrm{i}}{2} \mathrm{e}^{-x} \left\{ E_{1}(-x-\mathrm{i}x) - E_{1}(-x+\mathrm{i}x) \right\} \right) 							\sum\limits_{j=0}^{n} \binom{n}{j} \frac{(-1)^{n-j} ~ m!}{(m-j)!} x^{m-j} \\
						& \quad + \sum\limits_{j=0}^{n} \binom{n}{j} (-1)^{n-j} \frac{m!}{(m-j)!} \sum\limits_{r=1}^{m-j}  
									\frac{(r-1)!}{2^{r/2}} \sin \Big(\, \frac{r \pi}{4} - x \Big)\, x^{m-j-r}.
					\end{align*}			
		\item For all $ x > 0 $,
					\begin{align*}
						&P_{m,n}^{+}(x)
						:= \int_{0}^{\infty} \mathrm{e}^{-y} y^{m} \frac{\partial^{n}}{\partial x^{n}} ~ \frac{\sin(x+y)}{x+y} \mathrm{d}y \\
						&= (-1)^{m} \frac{\mathrm{i}}{2} \mathrm{e}^{x} \left\{ E_{1}(x+\mathrm{i}x) - E_{1}(x-\mathrm{i}x) \right\} 
							\sum\limits_{j=0}^{n} \binom{n}{j} \frac{m!}{(m-j)!} x^{m-j} \\
						& \quad + \sum\limits_{j=0}^{n} \binom{n}{j} \frac{m!}{(m-j)!} \sum\limits_{r=1}^{m-j} (-1)^{m-r} \frac{(r-1)!}{2^{r/2}} \sin \Big(\, 										\frac{r \pi}{4} + x \Big)\, x^{m-j-r}.
					\end{align*}
	\end{enumerate}
\end{lemma}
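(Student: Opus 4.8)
The plan is to reduce both parts to their $ n=0 $ instances by integrating by parts $ n $ times in $ y $. The key observation is that $ \frac{\partial}{\partial x}\,\frac{\sin(x\mp y)}{x\mp y}=\mp\,\frac{\partial}{\partial y}\,\frac{\sin(x\mp y)}{x\mp y} $, and therefore $ \frac{\partial^{n}}{\partial x^{n}}\,\frac{\sin(x\mp y)}{x\mp y}=(\mp 1)^{n}\,\frac{\partial^{n}}{\partial y^{n}}\,\frac{\sin(x\mp y)}{x\mp y} $, so that all of the derivatives can be transferred onto the smooth, exponentially decaying weight $ \mathrm{e}^{-y}y^{m} $. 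In the resulting integration by parts every boundary term vanishes: at $ y=\infty $ because $ \mathrm{e}^{-y}y^{m} $ and all its derivatives tend to $ 0 $ while the derivatives of $ z\mapsto\frac{\sin z}{z} $ are bounded by Proposition \ref{FourierFaktor}; at $ y=0 $ because $ \frac{\mathrm{d}^{k}}{\mathrm{d}y^{k}}(\mathrm{e}^{-y}y^{m}) $ still carries a factor $ y $ there for every $ k\leq n-1 $, which is exactly where the hypothesis $ m\geq n $ is used.

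For part (1), Leibniz' rule gives $ \frac{\mathrm{d}^{n}}{\mathrm{d}y^{n}}(\mathrm{e}^{-y}y^{m})=\sum_{j=0}^{n}\binom{n}{j}(-1)^{n-j}\frac{m!}{(m-j)!}\mathrm{e}^{-y}y^{m-j} $ (all summands present since $ n\leq m $), so after the integration by parts
\begin{equation*}
	P_{m,n}^{-}(x)=\sum_{j=0}^{n}\binom{n}{j}(-1)^{n-j}\frac{m!}{(m-j)!}\int_{0}^{\infty}\mathrm{e}^{-y}y^{m-j}~\frac{\sin(x-y)}{x-y}\mathrm{d}y.
\end{equation*}
Inserting the value of the remaining integral from Lemma \ref{ExpliziteBerechnungI}, part (3), with $ m $ replaced by $ m-j $, and separating the term containing $ E_{1} $ from the elementary part, yields exactly the asserted formula for $ P_{m,n}^{-}(x) $.

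For part (2) the same manipulation (now with no sign coming from $ \frac{\partial^{n}}{\partial x^{n}}=\frac{\partial^{n}}{\partial y^{n}} $, but a surviving $ (-1)^{n} $ from the integration by parts, which combines with the $ (-1)^{n-j} $ from Leibniz' rule to $ (-1)^{j} $) gives $ P_{m,n}^{+}(x)=\sum_{j=0}^{n}\binom{n}{j}(-1)^{j}\frac{m!}{(m-j)!}P_{m-j,0}^{+}(x) $, so it only remains to evaluate the base case $ P_{m,0}^{+} $, which is not contained in Lemma \ref{ExpliziteBerechnungI}. I would obtain it from Lemma \ref{ExplitziteBerechnung Zitat H.T.F.}: writing $ \sin(x+y)=\sin x\cos y+\cos x\sin y $ and $ \mathrm{e}^{-y}(\cos y+\mathrm{i}\sin y)=\mathrm{e}^{-(1-\mathrm{i})y} $ shows
\begin{equation*}
	P_{m,0}^{+}(x)=\operatorname{Im}\left[\mathrm{e}^{\mathrm{i}x}\int_{0}^{\infty}\frac{y^{m}\mathrm{e}^{-(1-\mathrm{i})y}}{x+y}\mathrm{d}y\right],
\end{equation*}
and Lemma \ref{ExplitziteBerechnung Zitat H.T.F.} with $ a=1-\mathrm{i} $ evaluates the integral. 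Using $ \mathrm{e}^{\mathrm{i}x}\mathrm{e}^{(1-\mathrm{i})x}=\mathrm{e}^{x} $, the relation $ \overline{E_{1}(z)}=E_{1}(\bar{z}) $ on $ \overline{\mathbb{H}_{r}}\setminus\{0\} $ (so that $ \operatorname{Im}E_{1}((1-\mathrm{i})x)=\frac{1}{2\mathrm{i}}\{E_{1}(x-\mathrm{i}x)-E_{1}(x+\mathrm{i}x)\} $), and $ (1-\mathrm{i})^{-r}=2^{-r/2}\mathrm{e}^{\mathrm{i}r\pi/4} $, this collapses to
\begin{equation*}
	P_{m,0}^{+}(x)=(-1)^{m}\frac{\mathrm{i}}{2}\mathrm{e}^{x}\{E_{1}(x+\mathrm{i}x)-E_{1}(x-\mathrm{i}x)\}x^{m}+\sum_{r=1}^{m}(-1)^{m-r}\frac{(r-1)!}{2^{r/2}}\sin\Big(\frac{r\pi}{4}+x\Big)x^{m-r},
\end{equation*}
i.e.\ the $ n=0 $ case of the claim. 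Substituting this back and simplifying the signs by means of $ (-1)^{j}(-1)^{m-j}=(-1)^{m} $ and $ (-1)^{j}(-1)^{m-j-r}=(-1)^{m-r} $ produces the stated expression for $ P_{m,n}^{+}(x) $.

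The routine steps are the binomial and trigonometric bookkeeping. The two points that require genuine care are the vanishing of all boundary terms in the repeated integration by parts — which is exactly where the hypothesis $ m\geq n $ and the boundedness of every derivative of $ \frac{\sin z}{z} $ (Proposition \ref{FourierFaktor}) enter — and the evaluation of the base case $ P_{m,0}^{+} $, in particular keeping track of the branch of $ E_{1} $ along the rays through $ x\pm\mathrm{i}x $ and reducing the powers of $ 1-\mathrm{i} $ to the terms $ \sin(r\pi/4+x) $.
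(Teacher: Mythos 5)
Your proof is correct and follows essentially the same route as the paper: transfer the $x$-derivatives to $y$-derivatives, integrate by parts $n$ times (using $m\geq n$ to kill the boundary terms), apply Leibniz' rule, and then evaluate the base case via Lemma \ref{ExpliziteBerechnungI}(3) for $P_{m,n}^{-}$ and via Lemma \ref{ExplitziteBerechnung Zitat H.T.F.} with $a=1-\mathrm{i}$ for $P_{m,n}^{+}$. Your write-up is in fact more explicit than the paper's about why the boundary terms vanish and how the base case $P_{m,0}^{+}$ collapses to the stated form.
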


\begin{proof}
	\begin{enumerate}
		\item It is straightforward to compute
					\begin{align*}
						&\int_{0}^{\infty} \mathrm{e}^{-y} y^{m} \frac{\partial^{n}}{\partial x^{n}} ~ \frac{\sin(x-y)}{x-y} \mathrm{d}y
						= \int_{0}^{\infty} \frac{\partial^{n}}{\partial y^{n}} \{ \mathrm{e}^{-y} y^{m} \} ~ \frac{\sin(x-y)}{x-y} \mathrm{d}y \\
						&= \sum\limits_{j=0}^{n} \binom{n}{j} (-1)^{n-j} \frac{m!}{(m-j)!} \int_{0}^{\infty} \mathrm{e}^{-y} y^{m-j} ~ \frac{\sin(x-y)}{x-y} 								\mathrm{d}y, \quad x > 0, 
					\end{align*}
					where we used integration by parts and Leibniz' rule.
					Now the claim follows from the third part of Lemma \ref{ExpliziteBerechnungI}.
		\item This is proved analogously to the first part of this lemma; just use Lemma \ref{ExplitziteBerechnung Zitat H.T.F.} instead of Lemma 										\ref{ExpliziteBerechnungI}.
	\end{enumerate}
\end{proof}

\begin{lemma} \label{ExpliziteBerechnungIII}
	Let $ m \in \mathbb{N} $ and $ n \in \mathbb{N}_{0} $. Then the following results hold true:
	\begin{enumerate}
		\item For all $ x > 0 $,
			\begin{equation*}
				\int\limits_{0}^{\infty} \mathrm{e}^{-y} \frac{\partial^{n}}{\partial y^{n}} ~ \frac{\sin(x-y)}{x-y} \mathrm{d}y
				= \sum\limits_{b=0}^{n-1} (-1)^{n+b} \frac{\mathrm{d}^{n-1-b}}{\mathrm{d}x^{n-1-b}} ~ \frac{\sin(x)}{x}
					+ \int\limits_{0}^{\infty} \mathrm{e}^{-y} ~ \frac{\sin(x-y)}{x-y} \mathrm{d}y.
			\end{equation*}
		\item For all $ x > 0 $,
			\begin{equation*}
				\int_{0}^{\infty} \mathrm{e}^{-y} ~ \frac{\partial^{n}}{\partial y^{n}} ~ \frac{\sin(x+y)}{x+y} \mathrm{d}y
					= - \sum\limits_{b=0}^{n-1} \frac{\mathrm{d}^{n-1-b}}{\mathrm{d}x^{n-1-b}} ~ \frac{\sin(x)}{x}
						+ \int_{0}^{\infty} \mathrm{e}^{-y} ~ \frac{\sin(x+y)}{x+y} \mathrm{d}y.
			\end{equation*} \newpage
		\item If $ m < n $ then
					\begin{align*}
						 &Q_{m,n}^{-}(x)
						:=	\int_{0}^{\infty} \mathrm{e}^{-y} y^{m} \frac{\partial^{n}}{\partial x^{n}} ~ \frac{\sin(x-y)}{x-y} \mathrm{d}y \\
						&= (-1)^{n-m-1} \sum\limits_{s=0}^{n-m-1} (-1)^{s} \frac{(n-s-1)!}{(n-m-s-1)!} ~ \frac{\mathrm{d}^{s}}{\mathrm{d}x^{s}} ~ \frac{\sin(x)}{x} 							\\ 
						& \quad + \left( \pi \mathrm{e}^{-x} + \frac{\mathrm{i}}{2} \mathrm{e}^{-x} \left\{ E_{1}(-x-\mathrm{i}x) - E_{1}(\mathrm{i}x-x) \right\} 										\right) \sum\limits_{j=0}^{m} \binom{n}{j} \frac{(-1)^{n-j} ~ m!}{(m-j)!} x^{m-j} \\
						& \quad + \sum\limits_{j=0}^{m} \binom{n}{j} (-1)^{n-j} \frac{m!}{(m-j)!} \sum\limits_{r=1}^{m-j}  
									\frac{(r-1)!}{2^{r/2}} \sin \Big(\, \frac{r \pi}{4} - x \Big)\, x^{m-j-r}, \quad x > 0.
					\end{align*} 
		\item If $ m < n $ then
					\begin{align*}
						&Q_{m,n}^{+}(x)
						:= \int_{0}^{\infty} \mathrm{e}^{-y} y^{m} \frac{\partial^{n}}{\partial x^{n}} ~ \frac{\sin(x+y)}{x+y} \mathrm{d}y \\
						&= (-1)^{m+1} \sum\limits_{s=0}^{n-m-1} \frac{(n-s-1)!}{(n-m-s-1)!} ~ \frac{\mathrm{d}^{s}}{\mathrm{d}x^{s}} ~ \frac{\sin(x)}{x} \\
						& \quad + (-1)^{m} \frac{\mathrm{i}}{2} \mathrm{e}^{x} \left\{ E_{1}(x+\mathrm{i}x) - E_{1}(x-\mathrm{i}x) \right\} \sum\limits_{j=0}^{m} 								\binom{n}{j} \frac{m!}{(m-j)!} x^{m-j} \\
						& \quad + \sum\limits_{j=0}^{m} \binom{n}{j} \frac{m!}{(m-j)!} \sum\limits_{r=1}^{m-j} (-1)^{m-r}  
								\frac{(r-1)!}{2^{r/2}} \sin \Big(\, \frac{r \pi}{4} + x \Big)\, x^{m-j-r}, \quad x > 0.
					\end{align*}
	\end{enumerate}
\end{lemma}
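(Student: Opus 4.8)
The plan is to reduce all four formulas to repeated integration by parts in the variable $y$. The starting point is that $z \mapsto \sin(z)/z$ is entire (Proposition \ref{FourierFaktor}), so $\frac{\sin(x-y)}{x-y}$ and $\frac{\sin(x+y)}{x+y}$ are smooth in $(x,y)$ and depend only on $x-y$ resp.\ $x+y$; hence $\frac{\partial^{n}}{\partial x^{n}}\frac{\sin(x-y)}{x-y} = (-1)^{n}\frac{\partial^{n}}{\partial y^{n}}\frac{\sin(x-y)}{x-y}$ and $\frac{\partial^{n}}{\partial x^{n}}\frac{\sin(x+y)}{x+y} = \frac{\partial^{n}}{\partial y^{n}}\frac{\sin(x+y)}{x+y}$. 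Moreover all $y$-derivatives of these functions are bounded (again Proposition \ref{FourierFaktor}), so in every integration by parts below the boundary term at $y \to \infty$ vanishes because of the factor $\mathrm{e}^{-y}$ (and, in parts (3)--(4), the polynomial $y^{m}$), while at $y = 0$ we will use $\frac{\partial^{k}}{\partial y^{k}}\frac{\sin(x-y)}{x-y}\big|_{y=0} = (-1)^{k}\frac{\mathrm{d}^{k}}{\mathrm{d}x^{k}}\frac{\sin(x)}{x}$ and $\frac{\partial^{k}}{\partial y^{k}}\frac{\sin(x+y)}{x+y}\big|_{y=0} = \frac{\mathrm{d}^{k}}{\mathrm{d}x^{k}}\frac{\sin(x)}{x}$.

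For parts (1) and (2) I would write $g(y)$ for $\frac{\sin(x-y)}{x-y}$ resp.\ $\frac{\sin(x+y)}{x+y}$ and integrate $\int_{0}^{\infty}\mathrm{e}^{-y}\,g^{(n)}(y)\,\mathrm{d}y$ by parts $n$ times; since $\frac{\mathrm{d}^{k}}{\mathrm{d}y^{k}}\mathrm{e}^{-y}\big|_{y=0} = (-1)^{k}$, the telescoping boundary contributions collapse to $-\sum_{k=0}^{n-1} g^{(n-1-k)}(0)$ plus the surviving integral $\int_{0}^{\infty}\mathrm{e}^{-y} g(y)\,\mathrm{d}y$, and substituting the boundary values of $g$ recorded above and reindexing $b=k$ gives exactly the claimed identities (the signs $(-1)^{n+b}$ appear in the ``$-$'' case, none in the ``$+$'' case).

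For parts (3) and (4), assume $m < n$. First replace $\frac{\partial^{n}}{\partial x^{n}}$ by $(\pm1)^{n}\frac{\partial^{n}}{\partial y^{n}}$ using the identity above, and then integrate $\int_{0}^{\infty}(\mathrm{e}^{-y}y^{m})\,g^{(n)}(y)\,\mathrm{d}y$ by parts $n$ times. By Leibniz' rule $\frac{\mathrm{d}^{k}}{\mathrm{d}y^{k}}(\mathrm{e}^{-y}y^{m})\big|_{y=0}$ equals $0$ for $k < m$ and $\binom{k}{m}(-1)^{k-m}m!$ for $m \leq k \leq n-1$, since at $y=0$ only the term hitting $y^{m}$ with exactly $m$ derivatives survives. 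Collecting these boundary terms, substituting the boundary values of $g$, reindexing $s = n-1-k$, and using $m!\binom{n-1-s}{m} = (n-s-1)!/(n-m-s-1)!$ produces the first sum in each formula, with leading sign $(-1)^{n-m-1}$ in the ``$-$'' case (because of the extra $(-1)^{n}$) and $(-1)^{m+1}$ in the ``$+$'' case. The remaining full-line integral $(\pm1)^{n}\int_{0}^{\infty}\big(\frac{\mathrm{d}^{n}}{\mathrm{d}y^{n}}(\mathrm{e}^{-y}y^{m})\big)\,g(y)\,\mathrm{d}y$ is handled by expanding $\frac{\mathrm{d}^{n}}{\mathrm{d}y^{n}}(\mathrm{e}^{-y}y^{m}) = \sum_{j=0}^{m}\binom{n}{j}(-1)^{n-j}\frac{m!}{(m-j)!}\mathrm{e}^{-y}y^{m-j}$, which turns it into a $\binom{n}{j}$-weighted sum of the integrals $\int_{0}^{\infty}\mathrm{e}^{-y}y^{m-j}\frac{\sin(x-y)}{x-y}\,\mathrm{d}y$ resp.\ $\int_{0}^{\infty}\mathrm{e}^{-y}y^{m-j}\frac{\sin(x+y)}{x+y}\,\mathrm{d}y$; these are available in closed form, the first from Lemma \ref{ExpliziteBerechnungI}(3) (with $m$ replaced by $m-j$), the second from the $n=0$ case of Lemma \ref{ExpliziteBerechnungII}(2) (equivalently from Lemma \ref{ExplitziteBerechnung Zitat H.T.F.} applied to $\sin(x+y) = \operatorname{Im}\mathrm{e}^{\mathrm{i}(x+y)}$). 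Substituting and combining yields the stated expressions for $Q_{m,n}^{-}$ and $Q_{m,n}^{+}$.

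The analysis here is routine: every integration by parts is legitimate by the exponential decay together with the uniform boundedness of all derivatives of $\sin(z)/z$. \emph{The real work, and the step most likely to go wrong, is the sign- and coefficient-bookkeeping} needed to match the telescoped boundary sums to the exact closed forms --- this uses the reindexings $k \mapsto s = n-1-k$ and $k \mapsto r = 1+k$, the elementary identity $\sum_{j=r}^{m}(-1)^{j}\binom{m}{j}\binom{j-1}{r-1} = (-1)^{r}$ from Lemma \ref{ExpliziteBerechnungI}(2), and care with parity reductions such as $(-1)^{n+m+1} = (-1)^{n-m-1}$. Part (4) is entirely parallel to part (3), the only differences being the absence of the $(-1)^{n}$ coming from the $x \leftrightarrow y$ derivative swap and the use of the ``$+$'' closed-form integral in place of the ``$-$'' one.
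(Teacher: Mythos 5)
Your argument is correct and is essentially the paper's own proof: the paper establishes (1)--(2) by induction on $ n $ (one integration by parts per step, which is exactly your telescoping), and for (3)--(4) it likewise trades the $ x $-derivatives for $ y $-derivatives, integrates by parts $ n $ times, identifies the boundary contributions at $ y=0 $ (organized there as a double sum built from the leading term of $ \frac{\mathrm{d}^{m+a}}{\mathrm{d}y^{m+a}}\{\mathrm{e}^{-y}y^{m}\} $ together with part (1), which your direct Leibniz evaluation of $ \frac{\mathrm{d}^{k}}{\mathrm{d}y^{k}}\{\mathrm{e}^{-y}y^{m}\}\big|_{y=0} $ reaches in one step), and closes the surviving integral via Lemma \ref{ExpliziteBerechnungII} and hence Lemma \ref{ExpliziteBerechnungI}(3) and Lemma \ref{ExplitziteBerechnung Zitat H.T.F.}. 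One sign to repair: the prefactor of the surviving integral $ \int_{0}^{\infty}\frac{\mathrm{d}^{n}}{\mathrm{d}y^{n}}\{\mathrm{e}^{-y}y^{m}\}\,g(y)\,\mathrm{d}y $ is $ (\mp 1)^{n} $, i.\,e.\ $ +1 $ in the ``$-$'' case and $ (-1)^{n} $ in the ``$+$'' case, not $ (\pm 1)^{n} $ as written --- with your convention the $ E_{1} $-coefficient in $ Q_{m,n}^{+} $ would come out as $ (-1)^{n+m} $ instead of the stated $ (-1)^{m} $.
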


\begin{proof}
	\begin{enumerate}
		\item This is easily proved by induction on $ n \in \mathbb{N}_{0} $.
		\item This is easily proved by induction on $ n \in \mathbb{N}_{0} $, too.
		\item It follows analogously to the proof of the first part of Lemma \ref{ExpliziteBerechnungII} that
					\begin{align*}
						&\int_{0}^{\infty} \mathrm{e}^{-y} y^{m} \frac{\partial^{n}}{\partial x^{n}} ~ \frac{\sin(x-y)}{x-y} \mathrm{d}y \\
						&= \mathrm{e}^{-x} \left( \pi + \frac{\mathrm{i}}{2} \left\{ E_{1}(-x-\mathrm{i}x) - E_{1}(\mathrm{i}x -x ) \right\} \right) 															\sum\limits_{j=0}^{m} \binom{n}{j} \frac{(-1)^{n-j} m!}{(m-j)!} x^{m-j} \\
						& \quad + \sum\limits_{j=0}^{m} \binom{n}{j} (-1)^{n-j} \frac{m!}{(m-j)!} \sum\limits_{r=1}^{m-j}  
									\frac{(r-1)!}{2^{r/2}} \sin \Big(\, \frac{r \pi}{4} - x \Big)\, x^{m-j-r} \\
						& \quad + \text{boundary terms} \tag{$ \ast $} 
					\end{align*}
					for all $ x > 0 $. Since
					\begin{equation*}
						\int_{0}^{\infty} \mathrm{e}^{-y} \frac{\partial^{j}}{\partial y^{j}} ~ \frac{\sin(x-y)}{x-y} \mathrm{d}y
						= \int_{0}^{\infty} \mathrm{e}^{-y} \frac{\partial^{j-1}}{\partial y^{j-1}} ~ \frac{\sin(x-y)}{x-y} \mathrm{d}y
							+ \text{boundary term}
					\end{equation*}
					and
					\begin{equation*}
						\frac{\mathrm{d}^{m+j}}{\mathrm{d} y^{m+j}} \left\{ \mathrm{e}^{-y} y^{m} \right\}
						= m! \, (-1)^{j} \binom{m+j}{m} \mathrm{e}^{-y} + \mathrm{remainder} 
					\end{equation*}
					for all $ j \in \mathbb{N} $ it follows with (1) that the boundary terms in ($ \ast $) are given by
					\begin{align*}
						 &m! \sum\limits_{a=0}^{n-m-1} \binom{m+a-1}{m-1} (-1)^{a} \sum\limits_{b=0}^{n-m-a-1} (-1)^{b} 																														\frac{\mathrm{d}^{n-m-a-1-b}}{\mathrm{d}x^{n-m-a-1-b}} ~ \frac{\sin(x)}{x} \\
						 &= m! \sum\limits_{r=0}^{n-m-1} (-1)^{r} \frac{\mathrm{d}^{n-m-r-1}}{\mathrm{d}x^{n-m-r-1}} ~ \frac{\sin(x)}{x} \sum\limits_{a=0}^{r} 											\binom{m+a-1}{m-1} \\
						 &= (-1)^{n-m-1} \sum\limits_{s=0}^{n-m-1} (-1)^{s} \frac{(n-s-1)!}{(n-m-s-1)!} ~ \frac{\mathrm{d}^{s}}{\mathrm{d}x^{s}} ~ \frac{\sin(x)}{x},
						 \quad x > 0,
					\end{align*}
					where we substituted $ r := a+b $, compared coefficients, and put $ s := n-m-r-1 $. Now the claim follows.
		\item This is proved analogously to the third part of this lemma; just use (2) instead of (1).
	\end{enumerate}
\end{proof}

\begin{theorem} \label{Satz ueber die explizite Darstellung der Funktionen k}
	Let $ \ell \in \mathbb{N} $. Then one has
	\begin{align*}
		&k^{(\ell)}(x) \\
		&= \frac{1}{\pi} ~ \frac{1}{2^{2\ell - 2}} \left\{ \sum\limits_{m=0}^{\ell-1} \, \sum\limits_{n=0}^{m} (-1)^{n} \binom{2\ell}{n} \frac{2^{m}}{m!} 					\binom{2\ell - m - 2}{\ell - 1} \left[ P_{m,n}^{-}(x) + P_{m,n}^{+}(x) \right] \right. \\
		& \quad	\left. + \sum\limits_{m=0}^{\ell-1} \, \sum\limits_{n=m+1}^{2\ell} (-1)^{n} \binom{2\ell}{n} \frac{2^{m}}{m!} \binom{2\ell - m - 2}{\ell - 				1} \left[ Q_{m,n}^{-}(x) + Q_{m,n}^{+}(x) \right] \right\}
	\end{align*}
	for all $ x > 0 $ where the functions $ P_{m,n}^{\pm}(\cdot) $ and $ Q_{m,n}^{\pm}(\cdot) $ are defined as in Lemma \ref{ExpliziteBerechnungII} and 		Lemma \ref{ExpliziteBerechnungIII}, respectively.
\end{theorem}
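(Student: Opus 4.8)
The plan is to start from the formula of Corollary~\ref{Integralkerne}, substitute the explicit polynomial $p_{\ell}$, reorganize the double sum, and then evaluate each convolution integral separately by means of Lemmas~\ref{ExpliziteBerechnungII} and~\ref{ExpliziteBerechnungIII}.

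First I would write out
\begin{equation*}
	\mathrm{e}^{-|w|} p_{\ell}(|w|) = \sum_{j=0}^{\ell-1} \frac{1}{2^{j}} \binom{\ell+j-1}{\ell-1} \frac{1}{(\ell-j-1)!} |w|^{\ell-j-1} \mathrm{e}^{-|w|}
\end{equation*}
and insert it into the expression for $k^{(\ell)}$ from Corollary~\ref{Integralkerne}. Putting $m := \ell-1-j$ in the inner sum and using $\binom{\ell+j-1}{\ell-1} = \binom{2\ell-m-2}{\ell-1}$, $\frac{1}{2^{j}} = \frac{2^{m}}{2^{\ell-1}}$ and $(\ell-j-1)! = m!$, the prefactor $\frac{1}{\pi}\frac{1}{2^{\ell-1}}$ becomes $\frac{1}{\pi}\frac{1}{2^{2\ell-2}}$, and interchanging the two finite sums yields
\begin{equation*}
	k^{(\ell)}(x) = \frac{1}{\pi}\,\frac{1}{2^{2\ell-2}} \sum_{m=0}^{\ell-1} \frac{2^{m}}{m!} \binom{2\ell-m-2}{\ell-1} \sum_{n=0}^{2\ell} (-1)^{n}\binom{2\ell}{n}\, D_{m,n}(x), \quad x>0,
\end{equation*}
where $D_{m,n}(x) := \frac{\mathrm{d}^{n}}{\mathrm{d}x^{n}}\big( (|w|^{m}\mathrm{e}^{-|w|}) \ast \tfrac{\sin w}{w} \big)(x)$. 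Thus the theorem reduces to computing $D_{m,n}(x)$ for $x>0$, $0\le m\le\ell-1$ and $0\le n\le 2\ell$.

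Next I would move the differentiation onto the second factor of the convolution. Since $|w|^{m}\mathrm{e}^{-|w|}\in L^{1}(\mathbb{R})$ and, by Proposition~\ref{FourierFaktor}, the function $\sin w/w$ and all of its derivatives are bounded on $\mathbb{R}$, a repeated use of dominated convergence justifies differentiating under the integral sign, so that
\begin{equation*}
	D_{m,n}(x) = \int_{-\infty}^{\infty} |w|^{m}\mathrm{e}^{-|w|}\, \Big( \frac{\mathrm{d}^{n}}{\mathrm{d}u^{n}}\frac{\sin u}{u} \Big)\Big|_{u=x-w}\, \mathrm{d}w .
\end{equation*}
Splitting this integral at $w=0$, substituting $w\mapsto -w$ on the negative half-line, and observing that $\big(\frac{\mathrm{d}^{n}}{\mathrm{d}u^{n}}\frac{\sin u}{u}\big)\big|_{u=x\mp y} = \frac{\partial^{n}}{\partial x^{n}}\frac{\sin(x\mp y)}{x\mp y}$ (for $x>0$ there is no singularity, $\sin z/z$ being entire) gives
\begin{equation*}
	D_{m,n}(x) = \int_{0}^{\infty} \mathrm{e}^{-y} y^{m}\, \frac{\partial^{n}}{\partial x^{n}}\frac{\sin(x-y)}{x-y}\,\mathrm{d}y + \int_{0}^{\infty}\mathrm{e}^{-y} y^{m}\,\frac{\partial^{n}}{\partial x^{n}}\frac{\sin(x+y)}{x+y}\,\mathrm{d}y .
\end{equation*}

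Finally I would split the $n$-sum at $n=m$. For $0\le n\le m$, Lemma~\ref{ExpliziteBerechnungII} identifies the two integrals above as $P_{m,n}^{-}(x)$ and $P_{m,n}^{+}(x)$, hence $D_{m,n}(x)=P_{m,n}^{-}(x)+P_{m,n}^{+}(x)$; for $m+1\le n\le 2\ell$, Lemma~\ref{ExpliziteBerechnungIII} (with its parts (1) and (2) handling the border case $m=0$) identifies them as $Q_{m,n}^{-}(x)$ and $Q_{m,n}^{+}(x)$, hence $D_{m,n}(x)=Q_{m,n}^{-}(x)+Q_{m,n}^{+}(x)$. Inserting these into the displayed expression for $k^{(\ell)}(x)$ and separating the two ranges of $n$ produces exactly the two double sums claimed in the statement. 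The only genuinely delicate points are the bookkeeping in the first step — one has to check $\frac{1}{2^{\ell-1}}\cdot\frac{1}{2^{\ell-1}}=\frac{1}{2^{2\ell-2}}$ and that the coefficient collapses to $\frac{2^{m}}{m!}\binom{2\ell-m-2}{\ell-1}$ — and aligning the cut $n\le m$ versus $n>m$ with the hypotheses $m\ge n$ of Lemma~\ref{ExpliziteBerechnungII} and $m<n$ of Lemma~\ref{ExpliziteBerechnungIII}; the interchange of differentiation and integration is routine given Proposition~\ref{FourierFaktor}.
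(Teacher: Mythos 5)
Your proposal is correct and follows exactly the route the paper intends: it performs the index shift $m = \ell - j - 1$ in $p_{\ell}$, verifies the resulting coefficient $\frac{2^{m}}{m!}\binom{2\ell-m-2}{\ell-1}$ and the prefactor $\frac{1}{2^{2\ell-2}}$, splits the convolution integral at $w=0$, and matches the two pieces with $P_{m,n}^{\pm}$ or $Q_{m,n}^{\pm}$ according to whether $n\le m$ or $n>m$. The paper compresses all of this into the single remark that the formula is ``straightforward'' after that index shift, so you have simply supplied the details (including the correct handling of the border case $m=0$ via parts (1) and (2) of Lemma \ref{ExpliziteBerechnungIII}).
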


\begin{proof}
	Under the assumptions of this theorem and with the index shift defined by $ m := \ell - j - 1 $ it is straightforward to prove the formula in Theorem 	\ref{Satz ueber die explizite Darstellung der Funktionen k}.
\end{proof}

\begin{corollary} \label{Die Funktionen k sind reell analytisch fuer positive x}
	Let $ \ell $ be in $ \mathbb{N} $. Then the function $ k^{(\ell)} $ is real analytic on $ (0,\infty) $.
\end{corollary}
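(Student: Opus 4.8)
The plan is to deduce the statement straight from the explicit formula established in Theorem \ref{Satz ueber die explizite Darstellung der Funktionen k}. That formula exhibits $ k^{(\ell)}(x) $, for $ x > 0 $, as a finite linear combination of the functions $ P_{m,n}^{\pm}(x) $ and $ Q_{m,n}^{\pm}(x) $, and, unwinding Lemma \ref{ExpliziteBerechnungII} and Lemma \ref{ExpliziteBerechnungIII}, each of these is a finite sum of products of just a handful of elementary building blocks: the exponentials $ \mathrm{e}^{\pm x} $, nonnegative integer powers of $ x $, the trigonometric functions $ \sin(x \pm r\pi/4) $ together with the derivatives $ \frac{\mathrm{d}^{s}}{\mathrm{d}x^{s}}\frac{\sin(x)}{x} $, and the four exponential-integral values $ E_{1}(\pm x - \mathrm{i}x) $, $ E_{1}(\pm x + \mathrm{i}x) $. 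Since the functions on $ (0,\infty) $ that admit a holomorphic extension to a complex neighbourhood of $ (0,\infty) $ form a ring, I would reduce the corollary to checking this extension property for each building block separately.

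The elementary blocks are disposed of quickly: $ \mathrm{e}^{\pm x} $ and the powers of $ x $ are entire, and by Proposition \ref{FourierFaktor} the function $ z\mapsto\sin(z)/z $ is entire, hence so are all of its derivatives, as are $ \sin $ and $ \cos $ precomposed with an affine function of $ x $. The substantive step is the exponential-integral terms. Here I would invoke the fact, recalled around \eqref{Darstellung der exponentiellen Integralfunktion}, that $ E_{1} $ is holomorphic on $ \mathbb{C}^{-} = \mathbb{C}\setminus(-\infty,0] $, and observe that for real $ x > 0 $ the arguments $ x(1\pm\mathrm{i}) $ lie in the open right half plane while $ x(-1\pm\mathrm{i}) $ lie in the open left half plane with nonzero imaginary part, so that all four stay inside the open set $ \mathbb{C}^{-} $. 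Consequently, for any fixed $ x_{0} > 0 $, the affine map $ x\mapsto x(\pm1\pm\mathrm{i}) $ sends a small complex disc around $ x_{0} $ into $ \mathbb{C}^{-} $, and composing with $ E_{1} $ yields a function holomorphic on a complex neighbourhood of $ x_{0} $.

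Reassembling the pieces, $ k^{(\ell)} $ coincides on $ (0,\infty) $ with a finite sum of products of functions each holomorphic on a complex neighbourhood of $ (0,\infty) $, hence is itself extendable to such a neighbourhood; since $ k^{(\ell)} $ is real-valued on $ (0,\infty) $ by Corollary \ref{Integralkerne}, this extension shows that $ k^{(\ell)} $ is real analytic on $ (0,\infty) $. I expect the only genuine obstacle to be the bookkeeping needed to confirm that none of the $ E_{1} $-arguments ever lands on the branch cut $ (-\infty,0] $ for $ x > 0 $ — and that is settled at once by noting that each such argument has nonzero imaginary part whenever $ x > 0 $.
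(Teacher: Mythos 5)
Your proof is correct and follows exactly the route the paper intends: the corollary is stated as an immediate consequence of the explicit formula in Theorem \ref{Satz ueber die explizite Darstellung der Funktionen k}, and you simply spell out the (correct) verification that each building block --- powers of $x$, exponentials, $\sin$, the derivatives of $\sin(z)/z$, and $E_{1}$ evaluated at $x(\pm 1\pm\mathrm{i})$, which avoid the branch cut $(-\infty,0]$ for $x>0$ --- extends holomorphically to a complex neighbourhood of $(0,\infty)$. No discrepancy with the paper's argument.
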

  
  \section*{Acknowledgement}
		The present paper is based on the author's Master's thesis "\"Uber eine diskrete Familie von Integraloperatoren". The author would like to thank his 		 supervisor, Vadim Kostrykin, for long and fruitful discussions about Hankel operators and methods of diagonalizing them. Furthermore, the author 				would like to thank Dmitri Yafaev for a useful question, Manfred Lehn for the simplification of the proof of (2) and (3) in Lemma 											\ref{NuetzlicheSummen}, and Hans Heinrich Leithoff for reading the manuscript.

\end{document}